\numberwithin{equation}{section}
\def\wideubar{\underaccent{{\cc@style\underline{\mskip10mu}}}}
\def\widebar{\accentset{{\cc@style\underline{\mskip10mu}}}}
\begin{document}
 
\newtheorem{theorem}{Theorem}[section]
\newtheorem{assume}[theorem]{Assumption}
\newtheorem{proposition}[theorem]{Proposition}
\newtheorem{corollary}[theorem]{Corollary}
\newtheorem{lemma}[theorem]{Lemma}
\newtheorem{definition}[theorem]{Definition}
\newtheorem{remark}[theorem]{Remark}
\newtheorem{claim}[theorem]{Claim}
\newcommand{\RNum}[1]{\uppercase\expandafter{\romannumeral #1\relax}}

\newenvironment{proof}{{\noindent \it{Proof}}\ }{\hfill $\square$\par}
\newenvironment{pf}{{\noindent\it Proof of the claim.}\ }{\hfill $\square$\par}

\title{The long-time behavior of solutions of a three-component reaction-diffusion model for the population dynamics of farmers and hunter-gatherers: the different motility case} 

\author{Dongyuan Xiao, Ryunosuke Mori}
\date{}
\maketitle


\begin{abstract}
In this paper, we investigate the spreading properties of solutions of the Aoki-Shida-Shigesada model. This model is a three-component reaction-diffusion system that delineates the geographical expansion of an initially localized population of farmers into a region occupied by hunter-gatherers. By considering the scenario where farmers and hunter-gatherers possess identical motility, Aoki {\it et al.}  previously concluded, through numerical simulations and some formal linearization arguments, that there are four different types of spreading behaviors depending on the parameter values. 
In this paper, we concentrate on the general case for which farmers and hunter-gatherers possess different motility.  By providing more sophisticated estimates, we not only theoretically justify for the spreading speed of the Aoki-Shida-Shigesada model, but also establish sharp estimates for the long-time behaviors of solutions. These estimates enable us to validate all four types of spreading behaviors observed by Aoki {\it et al.}. 

\vspace{10pt}
\hspace{-14pt}\textbf{Key words:\:Farmer and hunter-gatherer model;\:long time behavior;\:spreading speed;\: logarithmic correction}
\end{abstract}

\section{Introduction} 


In 1996, Aoki, Shida and Shigesada \cite{shigesada model} proposed the following three-component reaction-diffusion system to study the process of Neolithic transition from hunter-gatherers to farmers (actually, in \cite{shigesada model}, they only considered the with case with same diffusion rate $D_f=D_c=D_h$, but in the present paper we formulate the problem with different diffusion speeds): 
\begin{equation*}
\left\{
\begin{aligned}
&\partial_tF=D_f\partial_{xx}F+r_fF(1-(F+C)/K),\\
&\partial_tC=D_c\partial_{xx}C+r_cC(1-(F+C)/K)+e(F+C)H,\ \ \ \ \ \ \mbox{in}\ \mathbb{R}.\\
&\partial_tH=D_h\partial_{xx}H+r_hH(1-H/L)-e(F+C)H,
\end{aligned}
\right.
\end{equation*}
The population densities of initial farmers, converted farmers and hunter-gatherers are represented by $F$, $C$ and $H$, respectively.
This model contains nine positive parameters: $D_f$, $D_c$ and $D_h$ are the diffusion coefficients; $r_f$, $r_c$ and $r_h$ are the intrinsic growth rates; $K$ and $L$ are the carrying capacities of farmers and hunter-gatherers; $e$ is the conversion rate of hunter-gatherers to farmers. Note that, in \cite{shigesada model}, $D_h$ is always assumed to be equal to $D_c$ and $D_f$. However, it is more natural to imagine that hunter-gatherers possess a different motility with  farmers.

As shown in \cite{shigesada model}, by a suitable change of variables, the above system is converted to:
\begin{equation}\label{fch-equation}
\left\{
\begin{aligned}
&\partial_tF= \partial_{xx}F+aF(1-F-C),\\
&\partial_tC=d_c \partial_{xx}C+C(1-F-C)+sH(F+C),\\
&\partial_tH=d_h \partial_{xx}H+bH(1-H-g(F+C)),
\end{aligned}
\right.\quad\quad\mbox{in}\ \ \mathbb{R},
\end{equation}
where $d_c=D_c/D_f$, $d_h=D_h/D_f$, $a=r_f/r_c$, $b=r_h/r_c$, $s=eL/r_c$ and $g=eK/r_h$.
We consider the following initial condition
\begin{equation}\label{initial data}
H(0,x)\equiv 1,\ \ C(0,x)\equiv 0,\ \ F(0,x)=F_0(x)\ge0\ \ \ \ (F_0\not\equiv0),
\end{equation}
where $F_0(x)$ is a compactly supported continuous function. Such initial data indicates a localized population of farmers in a region that was originally occupied by hunter-gatherers. 

The ODE system related to the system (\ref{fch-equation}) possesses the following four different types of steady states: 
\[
(F,C,H)=
\left\{
\begin{aligned}
&(0,0,0),\ (0,0,1),\\
&(\widehat{F},\widehat{C},0),\ \ \mbox{where}\ \ \widehat{F}+\widehat{C}=1,\ \widehat{F}\ge 0,\ \widehat{C}\ge0,\\
&(0, C^*, H^*),\ \ \mbox{where}\ \  C^*=(1+s)/(1+sg),\ H^*=(1-g)/(1+sg).
\end{aligned}
\right.
\] 
The first two steady states always exist and unstable. The third one is a line of neutral equilibria, which always exists and stable if $g\ge 1$. The fourth one  exists and is stable if and only if $g<1$.  As noted in \cite{MX}, we call the case $g\ge 1$ by high conversion rate case and the case $g<1$ by low conversion rate case throughout this paper. 

\subsection{Previous works}

In the previous paper \cite{shigesada model},  Aoki {\it et al.} observed four different types of spreading behaviors depending on the parameter values, namely those of $a$, $s$ and $g$ in (\ref{fch-equation}) under the assumption that $d_c=d_h=1$. Furthermore, by mainly on numerical simulations, but also some formal analysis of the minimal speed of the traveling wave, they found the speed of the spreading fronts is determined by $\max\{2\sqrt{a},2\sqrt{1+s}\}$. In our previous work \cite{MX}, for the first time, we made mathematically rigorous studies to justify their claims with the same assumption on diffusion speeds. The following figures illustrate the shape of what they call the transient fronts.
\begin{center}
\begin{tikzpicture}[scale = 1.6]
\draw[<->](0,1.65)-- (0,0) -- (4.7,0) node[right] {$x$};
\draw[dotted, thick] (0,1) to [out=0,in=180] (1,0.8)--(3,0.8) to [out=0, in=180] (3.5,0)--(4.5,0.025);
\draw[very thick] (0,0.02) to [out=0,in=175] (1,0.3)--(3.1,0.3) to [out=5,in=175] (3.4,0.025)--(4.5,0.025);
\draw[thin] (0,0.02)--(3,0.02)to[out=0,in=180](3.5,1)--(4.5,1);
\draw[dashed] (3.25,1.5)--(3.25,-0.2);
\node[left] at (0,1) {\footnotesize{1}};
\node[left] at (0,0) {\footnotesize{0}};
\node[above] at (0.25,1) {\small{$F$}};
\node[above] at (2,0.4) {\small{$C$}};
\node[above] at (4,1) {\small{$H$}};
\node[right] at (3.25,0.5) {\textbf{$\rightarrow$}\ $c^*$};
\node[below] at (1.75,0) {\footnotesize{Final zone (for $F+C$)}};
\node[below] at (4.25,0) {\footnotesize{Leading edge}};
\node[below] at (2.5,-0.4) {\textbf{Figure 1:} High conversion rate case $g\ge 1$, $a> 1+s$.};
\end{tikzpicture}
\vspace{10pt}

\begin{tikzpicture}[scale = 1.6]
\draw[<->](0,1.65)-- (0,0) -- (4.7,0) node[right] {$x$};
\draw[dotted, thick] (0,1) to [out=0,in=180] (1,0.03)--(4.5,0.03);
\draw[very thick] (0,0.02) to [out=0,in=180] (1,1)--(3,1) to [out=0,in=180] (3.5,0.025)--(4.5,0.025);
\draw[thin] (0,0.02)--(3,0.02)to[out=0,in=180](3.5,1)--(4.5,1);
\draw[dashed] (3.25,1.5)--(3.25,-0.2);
\node[left] at (0,1) {\footnotesize{1}};
\node[left] at (0,0) {\footnotesize{0}};
\node[above] at (0.25,1) {\small{$F$}};
\node[above] at (2,1) {\small{$C$}};
\node[above] at (4,1) {\small{$H$}};
\node[right] at (3.25,0.5) {\textbf{$\rightarrow$}\ $c^*$};
\node[below] at (1.75,0) {\footnotesize{Final zone (for $F+C$)}};
\node[below] at (4.25,0) {\footnotesize{Leading edge}};
\node[below] at (2.5,-0.4) {\textbf{Figure 2:} High conversion rate case $g\ge 1$, $a<1+s$.};
\end{tikzpicture}
\vspace{10pt}

\begin{tikzpicture}[scale = 1.6]
\draw[<->](0,1.65)-- (0,0) -- (4.7,0) node[right] {$x$};
\draw[dotted, thick] (0,0.03) -- (2.8,0.03) to[out=0,in=210](3.25,0.5) to[out=350,in=170](3.6,0.03)--(4.5,0.03);
\draw[very thick] (0,1.3) --(2.7,1.3)to [out=0,in=180] (3.5,0.025)--(4.5,0.025);
\draw[thin] (0,0.8)--(3,0.8)to[out=0,in=180](3.5,1)--(4.5,1);
\draw[dashed] (3.25,1.5)--(3.25,-0.2);
\node[left] at (0,1) {\footnotesize{1}};
\node[left] at (0,0) {\footnotesize{0}};
\node[left] at (3.1,0.5) {\small{$F$}};
\node[above] at (1.5,1.3) {\small{$C$}};
\node[above] at (4,1) {\small{$H$}};
\node[right] at (3.25,0.6) {\textbf{$\rightarrow$}\ $c^*$};
\node[below] at (1.75,0) {\footnotesize{Final zone}};
\node[below] at (4.25,0) {\footnotesize{Leading edge}};
\node[below] at (2.5,-0.4) {\textbf{Figure 3:} Low conversion rate case $g<1$, $a> 1+s$.};
\end{tikzpicture}
\vspace{10pt}

\begin{tikzpicture}[scale = 1.6]
\draw[<->](0,1.65)-- (0,0) -- (4.7,0) node[right] {$x$};
\draw[dotted, thick] (0,0.02)--(4.5,0.02);
\draw[very thick] (0,1.3) --(2.7,1.3)to [out=0,in=180] (3.5,0.025)--(4.5,0.025);
\draw[thin] (0,0.8)--(3,0.8)to[out=0,in=180](3.5,1)--(4.5,1);
\draw[dashed] (3.25,1.5)--(3.25,-0.2);
\node[left] at (0,1) {\footnotesize{1}};
\node[left] at (0,0) {\footnotesize{0}};
\node[left] at (1.6,0.3) {\small{$F$}};
\node[above] at (1.5,1.3) {\small{$C$}};
\node[above] at (4,1) {\small{$H$}};
\node[right] at (3.25,0.6) {\textbf{$\rightarrow$}\ $c^*$};
\node[below] at (1.75,0) {\footnotesize{Final zone}};
\node[below] at (4.25,0) {\footnotesize{Leading edge}};
\node[below] at (2.5,-0.4) {\textbf{Figure 4:} Low conversion rate case $g<1$, $a<1+s$.};
\end{tikzpicture}
\end{center}

In order to make our motivation clearer, we give a brief explanation of the numerical observations by Aoki {\it et al.} \cite{shigesada model} and our previous theoretical results in \cite{MX} before stating our main results.
\begin{itemize}
\item[(1)] Theorem 1.5 in \cite{MX} indicates that the behaviors of solutions on the leading edge where hunter-gatherers have little contact with the farmers are almost the same between the high conversion rate case and the low conversion rate case (see Figure 1-4, Leading edge).

\item[(2)] Theorem 1.6 in \cite{MX} indicates that the spreading speed of the solution of \eqref{fch-equation} is always determined by $\max\{2\sqrt{a},2\sqrt{1+s}\}$.

\item[(3)] As showed in Figure 2 and Figure 4, in the case $a<1+s$, a wave of advance of initial farmers $F$ is not generated (see Theorem 1.9 and 1.10 in \cite{MX}). If, in addition that $g<1$ very small,  initial farmers $F$ disappear entirely, and converted farmers $C$ and  hunter-gatherers $H$ converge to a positive steady state (see Figure 4). Whereas, if $g\ge 1$, behind the wavefront, farmers have almost reached carrying capacity and hunter-gatherers have just  disappeared (see Figure 2,  Final zone).

\item[(4)] In the case $a>1+s$, an advancing wavefront of initial farmers $F$ is also generated (see Figure 1 and Figure 3). However, in \cite{MX}, we can only obtain an advancing wavefront formed by both initial farmers and converted farmers $G=F+C$. If, in addition that $g<1$, the waveform is a small peak with leading edge and trailing edge that converge to $0$ (see Theorem 1.8 in \cite{MX}).
\end{itemize}

In particular, for the case $a=1$ and $d_c=1$, by adding the both sides of $F$-equation and $C$-equation of \eqref{fch-equation}, we get
\begin{equation}\label{a=1 equation}
\left\{
\begin{aligned}
&\partial_tG= \partial_{xx}G+G(1+sH-G),\\
&\partial_tH=d_h \partial_{xx}H+bH(1-H-gG),
\end{aligned}
\right.\quad\quad\mbox{in}\ \ \mathbb{R}.
\end{equation}
The spreading speed of \eqref{a=1 equation} has been studied by Chen {\it et al.} in \cite{CT}.  Without constructing super and sub-solution and applying the comparison argument, they introduced
some Harnack type inequalities that can locate the spreading front of some general reaction-diffusion systems.

\begin{remark}\label{pic of F}
We would like to explain more on the numerical simulation showed in the case where $g\ge 1$ and $a<1+s$ (see Figure 2). The  numerical calculation was done in bounded domain and it would stop at the moment $T$ when the wavefronts of the solution reached a certain point $x_0\in[0,l]$. Therefore, one may find that  initial farmers $F$ still exist in the final zone. As a matter of fact,, we will prove in the present paper that the population density of initial farmers $F$ converges to $0$ uniformly for all $x\in\mathbb{R}$ (see Theorem \ref{th:profile c_c>c_f}). The sharp estimate given in Theorem \ref{th: bump} indicates that $F$ decreases to $0$ with polynomial order for $|x|\le t^{\frac{1}{2}}$, but with exponential order for $|x|>t^{\frac{1}{2}+\delta}$, $\delta>0$ very small. This explain the reason why we can observe a bump on profiles of $F$ and $C$.  
\end{remark}

\begin{remark}\label{rm: main result}
The goal of the present paper is to give rigorous justification to all of the above observations for the case $d_c\neq 1$ and $d_h\neq 1$.
The behaviors of solutions on the leading edge showed on Figure 1-4 can be justified by Theorem \ref{th:upper bound propagation}, which also provides an upper estimate on the spreading speed. On the other hand, the lower estimate on the spreading speed will be given in Theorem \ref{th:lower bound propagation}.
The behaviors of solutions
in the final zone showed on Figure 2 and 4 can be justified by Theorem \ref{th:profile c_c>c_f}  by dealing with the case $g\ge 1$ and $g<1$, respectively. 
\end{remark}

\subsection{Main results}
In this subsection, we will recall some previous works about the spreading properties of solutions of scalar KPP equation and present our main results which are concerned with the spreading speeds and the asymptotic profiles of solutions. Moreover, we will conclude how do our results justify the four different spreading behavior. 

Front propagation for scalar reaction-diffusion equations has been studied extensively and there exist vast literature on this theme. Early in 1937, Fisher \cite{F} and Kolmogorov, Petrovsky and Piskunov \cite{KPP} introduced a scalar reaction-diffusion equation with monostable nonlinearity as a model equation in population genetics, which studies the propagation of dominant gene in a homogeneous environment. In particular, \cite{KPP} made an important early analysis of the structure of the set of traveling waves for a special class of monostable reaction-diffusion equation, the so-called KPP equation. Application of reaction-diffusion equations to ecology was pioneered by Skellam \cite{S} in 1951. As regards the propagation of fronts for solutions with compactly supported initial data (that is, the so-called ``spreading front'') is concerned, the first rigorous mathematical results in multi-dimensional homogeneous environment were provided by Aronson and Weinberger \cite{AW}, for the case of monostable nonlinearity and also for bistable nonlinearity.

As regards the propagation of fronts for solutions of the reaction-diffusion system, there exists a rich literature on the cooperate system and the competition system, for which the comparison principle still hold.
However, apart from some recent works such as \cite{pp}, much less is known about the spreading properties for systems for which the comparison principle does not hold.

We first recall the classical spreading
result for the monostable scalar equation from Aronson and Weinberger \cite{AW}:
\begin{proposition}[\textbf{Spreading for the scalar KPP equation} (\cite{AW})]\label{prop of kpp equation}
Consider the so-called KPP equation

\begin{equation}\label{kpp equation}
\partial_tu(t,x)=d\partial_{xx}u(t,x)+f(u(t,x)),\ t>0,\ x\in\mathbb{R},
\end{equation}
wherein $f\in C^1(\mathbb{R})$ satisfies
$$f(0)=f(1)=0,\ f(u)>0\ \ \mbox{and}\ \ f'(u)\le f'(0)\ \ \mbox{for all}\ \ u\in (0,1).$$
Define $c^*(d,f)=2\sqrt{df'(0)}$. Then, for any nontrivial compactly supported and continuous initial data $u_0(x)$, the solution $u\equiv u(t,x;u_0(x))$ of (\ref{kpp equation}):
$$\underset{t\to+\infty}{\lim}\underset{|x| \le ct}{\sup}|1-u(t,x)|=0,\ 0<c<c^*(d,f),$$
and
$$\underset{t\to+\infty}{\lim}\underset{|x|\ge ct}{\sup}u(t,x)=0,\ c>c^*(d,f).$$
Moreover, the so-called spreading speed $c^*(d,f)$  coincides with the minimal speed of the traveling wave solution of admitted by (\ref{kpp equation}).
\end{proposition}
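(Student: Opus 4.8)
The plan is to establish the two spreading limits by comparison arguments and then read off the traveling-wave characterization; throughout I may assume $0\le u_0\le 1$, since replacing $u_0$ by $\min\{u_0,1\}$ only lowers the solution (the constant $1$ solves the equation as $f(1)=0$), which is harmless for the upper limit and yields a weaker lower limit, and then $0\le u\le1$ for all time.

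\emph{Upper limit.} From $f(0)=0$ and $f'\le f'(0)$ one gets $f(u)\le f'(0)u$ on $[0,1]$, so $u$ is a subsolution of $\partial_tv=d\partial_{xx}v+f'(0)v$. The travelling exponential $e^{-\lambda(x-ct)}$ solves this linear equation exactly when $c=d\lambda+f'(0)/\lambda$, and $\min_{\lambda>0}\{d\lambda+f'(0)/\lambda\}=2\sqrt{df'(0)}=c^*$. Hence for any $c_0>c^*$ there is $\lambda>0$ with $c_0\ge d\lambda+f'(0)/\lambda$, and then $\bar u(t,x):=\min\{1,\,Ae^{-\lambda(x-c_0t)}\}$ is a supersolution of \eqref{kpp equation} (where it equals the exponential, use $f(w)\le f'(0)w$; where it equals $1$, use $f(1)=0$; a minimum of supersolutions is again a supersolution). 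For $A$ large, $\bar u(0,\cdot)\ge u_0$ since $u_0$ is compactly supported, so comparison gives $u(t,x)\le Ae^{-\lambda(x-c_0t)}$ on $\mathbb R$, and symmetrically $u(t,x)\le Ae^{\lambda(x+c_0t)}$; together, $u(t,x)\le Ae^{-\lambda(|x|-c_0t)}$. For $c>c_0$ this forces $\sup_{|x|\ge ct}u(t,x)\le Ae^{-\lambda(c-c_0)t}\to0$, and letting $c_0\downarrow c^*$ gives the claim for all $c>c^*$.

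\emph{Lower limit.} This I would do in two steps. \textbf{Step 1 (hair-trigger).} Since $f\in C^1$, $f(0)=0$ and $f>0$ on $(0,1)$, pick $\eta\in(0,1)$ and $\delta\in(0,f'(0))$ with $f(u)\ge(f'(0)-\delta)u$ on $[0,\eta]$. On a large interval $(-R,R)$ the principal Dirichlet eigenvalue $d(\pi/2R)^2$ of $-d\partial_{xx}$ falls below $f'(0)-\delta$, so a small multiple of its positive eigenfunction is a stationary Dirichlet subsolution there; the Dirichlet flow from it increases in $t$ to a positive steady state $v_R$, and $v_R\to1$ locally uniformly as $R\to\infty$. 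As $u(1,\cdot)>0$ everywhere by the strong maximum principle, a small multiple of that eigenfunction (extended by $0$) lies below $u(1,\cdot)$, and comparison gives $u(t,x)\to1$ locally uniformly in $x$. \textbf{Step 2 (sharp speed).} For $c<c^*$ and $|c'|\le c$, the operator $d\partial_{\xi\xi}+c'\partial_\xi+f'(0)$ on a fixed large $(-L,L)$ with Dirichlet conditions has principal eigenvalue $f'(0)-c'^2/(4d)-d(\pi/2L)^2$, which is positive for all $|c'|\le c$ once $L$ is large --- this is precisely where $c<c^*$ enters, via $f'(0)-c^2/(4d)>0$ --- with positive eigenfunction $\Phi^{c'}_L(\xi)=e^{-c'\xi/(2d)}\cos(\pi\xi/(2L))$. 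Then, for $\epsilon$ small, $\epsilon\,\Phi^{c'}_L\!\big(x-c'(t-\tau_0)\big)$, set to $0$ outside the moving interval, is a compactly supported subsolution of \eqref{kpp equation}, and by Step 1 one may choose $\tau_0$ so that it lies below $u$ at time $\tau_0$; comparison and then a sweep of $c'$ over $[-c,c]$ yield $u(t,x)\ge\epsilon$ on all of $|x|\le c(t-\tau_0)$ for $t\ge\tau_0$. Finally, feeding this positive floor back into the hair-trigger mechanism ``attached to the front'' --- at a slightly earlier time a wide compactly supported bump of height $\epsilon$ sits below $u$ near the point in question and the KPP flow lifts it to nearly $1$ (equivalently: extract an entire solution by parabolic compactness and compare it with the spatially homogeneous flow of $\dot\xi=f(\xi)$ to conclude it is $\equiv1$) --- upgrades the floor to $1$, giving $\sup_{|x|\le ct}|1-u(t,x)|\to0$ for every $c<c^*$.

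\emph{Traveling waves, and the main obstacle.} A wave $\phi(x-ct)$ with $\phi(-\infty)=1$, $\phi(+\infty)=0$ solves $d\phi''+c\phi'+f(\phi)=0$; linearizing at $\phi=0$ gives the characteristic equation $d\mu^2+c\mu+f'(0)=0$, with both roots real and negative iff $c\ge c^*$ and complex (so $\phi$ must oscillate through $0$) if $c<c^*$, ruling out any admissible profile below $c^*$, while for $c\ge c^*$ the classical phase-plane/shooting argument produces a monotone heteroclinic from the saddle at $\phi=1$ (using $f>0$ on $(0,1)$) to the stable node at $\phi=0$; alternatively, a wave of speed $c_0<c^*$ would bound from above a solution with compactly supported data and contradict the lower limit just proved. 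Hence $c^*$ coincides with the minimal traveling-wave speed. I expect Step 2 of the lower limit to be the crux: the hair-trigger effect by itself only yields \emph{local} convergence to $1$, and isolating the \emph{sharp} speed $c^*$ --- rather than merely some positive propagation speed --- is exactly what the moving-frame eigenvalue computation (i.e., the translating compactly supported subsolution) is designed to achieve.
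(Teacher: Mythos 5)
The paper does not actually prove this proposition: it is recalled as background and attributed to Aronson--Weinberger \cite{AW}, so there is no in-paper argument to compare yours against. Your proof is essentially the classical one from that reference --- exponential supersolutions $\min\{1,Ae^{-\lambda(x-c_0t)}\}$ with $c_0=d\lambda+f'(0)/\lambda$ for the outer decay, and the hair-trigger effect combined with translating compactly supported Dirichlet subsolutions $\epsilon\,e^{-c'\xi/(2d)}\cos(\pi\xi/(2L))$ (plus the entire-solution/ODE upgrade from the floor $\epsilon$ to $1$) for the inner convergence --- and the core argument is correct; the wave characterization via the sign of the discriminant of $d\mu^2+c\mu+f'(0)=0$ is likewise standard. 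One caveat: your opening normalization is stated backwards. Replacing $u_0$ by $\min\{u_0,1\}$ \emph{lowers} the solution, which transfers the lower limit $\liminf\inf_{|x|\le ct}u\ge1$ to the original solution but transfers neither the outer upper limit $\sup_{|x|\ge ct}u\to0$ nor the bound $\limsup_{|x|\le ct}u\le1$; since the stated hypotheses constrain $f$ only on $[0,1]$, data exceeding $1$ is not actually controllable, and one should simply assume $0\le u_0\le1$ (as \cite{AW} effectively does, and as suffices for every application in this paper).
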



Next, we present our main results on the spreading properties of solutions of the system (\ref{fch-equation}). 
Throughout this paper, we define $c_f:=2\sqrt{a}$, $c_c:=2\sqrt{d_c(1+s)}$, and $c^*:=\max\{c_f,c_c\}$. 
The following Theorem \ref{th:upper bound propagation} and Theorem \ref{th:lower bound propagation}  can be regarded as an analogue of the well-known "hair-trigger effect" for scalar monostable equation \cite{AW}. Moreover, since $H_0(x)\equiv 1$,  one may find that the spreading speed of \eqref{fch-equation} is always determined by the larger value of $c_f$ and $c_c$, which is only determined by the parameters in the $F$-equation and $C$-equation.

Our first result is concerned with the analysis of the leading edge, which provides an upper estimate on the spreading speed. As we observed from Figure $1$-$4$, for all four cases, the behaviors of solutions are almost same on the leading edge. 

\begin{theorem}\label{th:upper bound propagation}
Let $(F,C,H)$ be the solution of \eqref{fch-equation} with the initial data satisfying \eqref{initial data}. Then for any $2\sqrt{a}<c_1$ and $c^*<c_2$, it holds:
\begin{equation}\label{upper F}
\limsup_{t\rightarrow\infty}\sup_{c_1 t\le|x|}F(t,x)=0,
\end{equation}
\begin{equation}\label{upper C H}
\limsup_{t\rightarrow\infty}\sup_{c_2 t\le |x|}\Big(C(t,x)+|1-H(t,x)|\Big)=0.
\end{equation}
\end{theorem}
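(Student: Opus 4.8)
The plan is to prove the two assertions separately: although \eqref{fch-equation} is not a cooperative system, each equation carries enough one-sided structure that the scalar parabolic maximum principle applies, and the coupling enters only through sign-definite forcing terms.

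\emph{A priori bounds and the estimate for $F$.} First I would record $0\le H\le 1$, $0\le F\le M_F:=\max\{1,\|F_0\|_\infty\}$ and $0\le C\le M_C$ for a suitable constant $M_C$, all obtained by applying the comparison principle component by component: $H\equiv 1$ and $H\equiv 0$ are super- and sub-solutions of the $H$-equation since $g(F+C)\ge 0$; $F\equiv 0$, $C\equiv 0$ are sub-solutions; and, using $C\ge 0$, $H\le 1$, $F\le M_F$, one has $\partial_tF\le\partial_{xx}F+aF(1-F)$ and $\partial_tC\le d_c\partial_{xx}C+(1+s)C-C^2+sM_F$, whose equilibria furnish the upper bounds. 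Since $\partial_tF\le\partial_{xx}F+aF$ with $F(0,\cdot)=F_0$ supported in some $[-R,R]$, the heat-kernel representation gives $F(t,x)\le C_1e^{at-(|x|-R)^2/(4t)}$ for $|x|\ge R$, $t\ge 1$; for any $c_1>2\sqrt a=c_f$ and $|x|\ge c_1t$ this is at most $C_1e^{c_1R/2}e^{-(c_1^2/4-a)t}\to 0$, which proves \eqref{upper F} and, crucially for later, gives explicit exponential decay of $F$ strictly to the right of any front moving at a speed exceeding $c_f$.

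\emph{Reductions for $C$ and $H$.} For the hunter-gatherers, put $V:=1-H\ge 0$; then $\partial_tV=d_h\partial_{xx}V-bHV+bHg(F+C)\le d_h\partial_{xx}V+bg(F+C)$, so $V\le\widetilde V$, the solution of $\partial_t\widetilde V=d_h\partial_{xx}\widetilde V+bg(F+C)$ with $\widetilde V(0,\cdot)=0$. Thus the $|1-H|$ part of \eqref{upper C H} reduces to controlling $C$ (hence $F+C$, by the previous step): given $c_2>c^*$, once $F+C$ is known to be small on $|x|\ge c_2't$ for some $c_2'\in(c^*,c_2)$, the source $bg(F+C)$ is, up to exponentially small errors, confined to $|x|\le c_2't$, and a Gaussian estimate forces $\widetilde V$ to be small on $|x|\ge c_2t$. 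For $C$ itself, the positivity structure and $H\le 1$ give $\partial_tC\le d_c\partial_{xx}C+(1+s)C+sF$, so $C$ is a sub-solution of a \emph{scalar} linear equation with the now-controlled forcing $sF$. On each moving half-line $\{x\ge c_2t\}$ and $\{x\le-c_2t\}$ I would then build a super-solution $\bar C$ of this equation dominating $C$ on the parabolic boundary — where only the crude bound $C\le M_C$ is available on $|x|=c_2t$ — and tending to $0$ in the interior; the natural ansatz is a sum of exponential profiles $e^{-\lambda(x-ct)}$ in the moving frame, the homogeneous part being a super-solution once $c\lambda-d_c\lambda^2-(1+s)>0$, i.e., $c>c_c$, and the remaining part being tuned to dominate $sF\lesssim e^{at-(|x|-R)^2/(4t)}$.

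\emph{The main obstacle.} The heart of the matter is precisely this last construction: showing that $\partial_tC\le d_c\partial_{xx}C+(1+s)C+sF$ cannot propagate faster than $c^*=\max\{c_f,c_c\}$. Bounding $F$ crudely by the solution of the bare linear equation $\partial_tw=\partial_{xx}w+aw$, which grows like $e^{at}$ in the bulk, would permit a spurious over-fast spreading; one must instead exploit that $F$ stays bounded and that its active region expands only at speed $c_f$, so that the forcing merely feeds $C$ into a region spreading at speed $c_f$, out of which $C$ escapes at its intrinsic speed $c_c$, and the elementary optimization $\sup_{\theta\in[0,1]}\big(c_f\theta+c_c(1-\theta)\big)=\max\{c_f,c_c\}$ pins the speed at $c^*$. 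Turning this into an honest super-solution requires matching the exponential rate of $\bar C$ to the Gaussian decay of $F$ ahead of its front and a careful treatment of the moving boundary $|x|=c_2t$; this bookkeeping, together with the reduction of $|1-H|$ to $F+C$, is the only genuinely nontrivial part, everything else being a routine application of the scalar comparison principle.
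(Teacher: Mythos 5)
Your strategy for $F$ and for $C$ is essentially the paper's: $F$ is compared with the scalar KPP super-solution obtained from $C\ge 0$, and $C$ is compared with an exponential profile $Ae^{-\lambda(x-ct)}$ moving at a speed slightly above $c^*$, the super-solution inequality for $\partial_tC\le d_c\partial_{xx}C+(1+s)C+sF$ holding because the forcing $sF$ is $o(\bar C)$ ahead of that front. The paper executes the step you defer as ``bookkeeping'' by splitting into the cases $a\gtrless d_c(1+s)$ and $d_c\gtrless 1$, choosing in each case the front speed ($2\sqrt a+\varepsilon$ or $2\sqrt{d_c(1+s)}+\varepsilon$) and the decay rate ($\sqrt{(1+s)/d_c}$ or $\sqrt{d_c(1+s)}$) so that $\bar F=o(\bar C)$ and the linearized inequality has a positive margin; it also sidesteps your moving-boundary issue by capping the super-solution with the constant $1+2s$, which is itself a super-solution of the $C$-equation, so the comparison runs on all of $\mathbb{R}$ from a fixed initial time. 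Where you genuinely diverge is the $H$-component: the paper constructs an explicit traveling sub-solution $\underline H=\max\{1-A_5e^{-\lambda_h(x-(c^*+2\varepsilon)t)},0\}$ with $\lambda_h$ tied to the decay rates of the $F$- and $C$-super-solutions so that $1-\underline H\ge g(F+C)$ on the leading edge, whereas you set $V=1-H$ and run Duhamel against the heat kernel with source $bg(F+C)$. Your route is viable and arguably more robust (it needs no fine tuning of $\lambda_h$), but it requires two small additional checks you should make explicit: the early-time contribution $\tau\in[0,T]$ before the asymptotic smallness of $F+C$ on $\{|y|\ge c_2'\tau\}$ is available (handled because $F+C$ has Gaussian tails at finite times), and the polynomial prefactor $t\cdot e^{-(c_2-c_2')^2t/(4d_h)}$ from integrating in $\tau$, which still vanishes. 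With those points filled in, the proposal is a correct proof.
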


The most difficult part of the analysis is the behaviors of the solutions in the final zone, where  hunter-gatherers, initial farmers and converted farmers heavily interact with each other. A large part of this paper is devoted to the analysis in this region.
Our second result deals with the propagation of farmers in the final zone, which provides a lower estimate on the spreading speed.

\begin{theorem}\label{th:lower bound propagation}
Let $(F,C,H)$ be the solution of \eqref{fch-equation} with the initial data  satisfying \eqref{initial data}. Then for any $c_1<c^*$, it holds:
\begin{equation}\label{lower F+C}
\liminf_{t\rightarrow\infty}\inf_{|x|\le c_1 t}\Big(F(t,x)+C(t,x)\Big)\ge1,
\end{equation}
\begin{equation}\label{lower H}
\limsup_{t\rightarrow\infty}\sup_{|x|\le c_1 t}H(t,x)\le \max\{0,1-g\}.
\end{equation}
\end{theorem}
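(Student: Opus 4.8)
The plan is to build compactly supported, moving sub-solutions for a scalar KPP equation that is dominated from below by the relevant component of the system, and then feed the resulting lower bound back into the other equations. The starting point is the observation that the combined farmer density $G := F + C$ satisfies
$$\partial_t G \ge \min\{1,d_c\}\,\partial_{xx}G + G(1+sH-G) - \text{(cross terms)},$$
but rather than wrestle with $G$ directly, I would split into the two cases dictated by which of $c_f, c_c$ is larger. Suppose first $c^*=c_c=2\sqrt{d_c(1+s)}$. On the leading-edge region we already know from Theorem \ref{th:upper bound propagation} that $H\to 1$, so for any $\varepsilon>0$, once $t$ is large, $C$ satisfies $\partial_t C \ge d_c\partial_{xx}C + C\big((1+s)(1-\varepsilon) - (F+C)\big)$ wherever $F+C$ is still small; combined with the nonnegativity of $F$ this feeds a KPP comparison giving $\liminf C \ge $ something positive on $|x|\le (c_c-\eta)t$. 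The difficulty is that $(1+sH - G)$ is only a useful source term while $G$ stays below its equilibrium, so one must run a bootstrap: first establish that $G$ is bounded below by a small positive constant on an expanding interval of speed close to $c^*$ (hair-trigger), then upgrade this to $\liminf G \ge 1$ by a second comparison using that near $G\approx 1$ and $H\approx 0$ the $G$-equation behaves like the logistic equation $\partial_t G = \partial_{xx}G + G(1-G)$ plus a nonnegative perturbation $sHG\ge 0$.

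The symmetric case $c^*=c_f=2\sqrt a$ is handled by the same scheme applied to $F$: on the leading edge $C$ and $H$ are controlled by Theorem \ref{th:upper bound propagation}, so $F$ obeys $\partial_t F \ge \partial_{xx}F + aF(1 - F - C)$ with $C$ small, hence $\liminf F \ge 1$ on $|x|\le (c_f-\eta)t$ by the scalar KPP spreading result (Proposition \ref{prop of kpp equation}) after the usual perturbation argument; since $F \le F+C$, \eqref{lower F+C} follows. The two cases together, together with the trivial bound $F+C \le$ the KPP solution from above (to ensure the liminf is exactly $1$ and not larger — although for the stated inequality only the lower bound is needed), give \eqref{lower F+C}.

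For \eqref{lower H}, I would now use \eqref{lower F+C} as an input. Fix $c_1 < c^*$ and $\varepsilon>0$. By \eqref{lower F+C}, for large $t$ we have $F+C \ge 1-\varepsilon$ on $|x|\le c_1 t$; actually I expect one needs $G$ bounded below on a slightly larger interval, say $|x|\le c_1' t$ with $c_1<c_1'<c^*$, which \eqref{lower F+C} supplies. Then on $|x|\le c_1 t$ the $H$-equation is dominated by $\partial_t H \le d_h\partial_{xx}H + bH\big(1 - H - g(1-\varepsilon)\big)$. When $g\ge 1$ the reaction term is $\le bH(1-g(1-\varepsilon)) \le 0$ for $\varepsilon$ small, i.e.\ $H$ satisfies a linear inequality with negative (or zero) coefficient on an expanding domain, forcing $\limsup H \le 0$; when $g<1$, the reaction term pushes $H$ toward the value $1-g(1-\varepsilon)$, and a sub/super-solution argument on the moving domain $|x|\le c_1 t$ (with the outer boundary data controlled because $H\le 1$ everywhere) gives $\limsup_{t\to\infty}\sup_{|x|\le c_1 t} H \le 1-g(1-\varepsilon)$; letting $\varepsilon\to 0$ yields $\limsup H \le 1-g = \max\{0,1-g\}$ in this case. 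Combining, $\limsup H \le \max\{0,1-g\}$.

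The main obstacle I anticipate is the interior-domain issue: all of these comparisons are on expanding truncated intervals $\{|x|\le c t\}$ rather than on all of $\mathbb{R}$, so one cannot directly invoke the scalar KPP spreading theorem as a black box. The standard remedy — and the one I would use — is to construct explicit generalized sub-solutions of the form $\underline u(t,x) = \phi\big((|x| - ct)/\sqrt{t}\big)$ or a suitably scaled compactly supported bump that is genuinely a sub-solution of the relevant scalar equation on a moving frame of speed $c<c^*$, exploiting that the error terms coming from the other components (controlled by Theorem \ref{th:upper bound propagation}) decay and can be absorbed into an $\varepsilon$-perturbation of the reaction term. Verifying that these bumps are sub-solutions — in particular checking the differential inequality at the moving free boundary and ensuring the perturbed reaction still satisfies a KPP-type sign condition — is where the real work lies; the rest is bootstrapping and taking $\varepsilon\to 0$.
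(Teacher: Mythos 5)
Your overall architecture (sub-solutions giving a positive lower bound on an expanding interval, a bootstrap up to $1$, then feeding $F+C\ge 1-\varepsilon$ into the $H$-equation) matches the paper's in spirit, and your treatment of \eqref{lower H} is essentially the paper's argument (Claim \ref{cl:aa}). But there are two genuine gaps. First, in the case $c^*=c_f$ you claim $\liminf F\ge 1$ on $|x|\le(c_f-\eta)t$ because ``$C$ is small''. Theorem \ref{th:upper bound propagation} controls $C$ only on the leading edge $|x|\ge c_2t$ with $c_2>c^*$; in the final zone $C$ is bounded \emph{below} away from $0$ (it becomes instantly positive through the source term $sH(F+C)$ and then grows), so the differential inequality $\partial_tF\ge\partial_{xx}F+aF(1-F-\varepsilon)$ fails exactly where your KPP comparison needs it, and the conclusion $\liminf F\ge1$ is in fact false (see Figures 1 and 3 and Theorem \ref{th:front peak F}: $F$ forms only a small peak at the front). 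Only $F+C\ge1$ is true, and for $d_c\neq1$ the sum $G=F+C$ satisfies no scalar equation (your inequality $\partial_{xx}F+d_c\partial_{xx}C\ge\min\{1,d_c\}\partial_{xx}G$ is not valid since second derivatives have no sign), so this cannot be repaired by a one-line comparison.

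Second, and more fundamentally, the initial step ``$C\ge\varepsilon$ (resp.\ $F+C\ge\varepsilon$) on $|x|\le c_1t$ for every $c_1<c^*$'' is circular as you set it up. A compactly supported bump sub-solution travelling at a speed close to $c_c$ needs the reaction coefficient $1+sH-(F+C)$ to stay close to $1+s$ on its moving support; but $H\approx1$ holds only \emph{ahead} of the front of $C$, and as soon as $C$ becomes order one on the support, $H$ is consumed and drops toward $\max\{0,1-g\}$, destroying the inequality. You flag this ``interior-domain issue,'' but your proposed fix (absorbing error terms controlled by Theorem \ref{th:upper bound propagation}) does not apply, since that theorem gives no information inside the cone $|x|\le c_1t$. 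The paper resolves this with a separate, substantial ingredient: Proposition \ref{prop 5 of log delay}, a Bramson-type estimate locating the front of $C$ at $c^*t-\frac{3d_c}{c^*}\ln t$ and guaranteeing that $C$ stays bounded away from $0$ there. This is then used to seed a \emph{stationary} Dirichlet-eigenfunction sub-solution $\eta\phi_R$ at the time the retarded front passes a given point, and a contradiction argument propagates the positive bound throughout $c_1t\le|x|\le c_2t$; the upgrade to $C\ge1$ is done not by a truncated-domain PDE comparison but by passing to entire solutions of the limit system (where $F_\infty\equiv0$) and using the spatially homogeneous sub-solution $1-(1-\varepsilon)e^{-\varepsilon(t+t_0)}$. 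Your proposal contains no substitute for the front-location step, which is where the real content of \eqref{lower F+C} lies.
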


\begin{remark}\label{rm: c_f and c_c}
The values $2\sqrt{a}$ and $2\sqrt{d_c(1+s)}$ indicate the spreading speed of initial farmers and converted farmers respectively. More precisely, if we let $C\equiv 0$ in \eqref{fch-equation}, then the spreading speed of initial farmers directly comes from the single equation 
$$\partial_tF= \partial_{xx}F+aF(1-F-C).$$ 
On the other hand, if we let $F\equiv 0$ in \eqref{fch-equation}, then we get 
\begin{equation*}
\left\{
\begin{aligned}
&\partial_tC=d_c \partial_{xx}C+C(1-F-C)+sH(F+C),\\
&\partial_tH=d_h \partial_{xx}H+bH(1-H-g(F+C)),
\end{aligned}
\right.\quad\quad\mbox{in}\ \ \mathbb{R}.
\end{equation*}
The main results in \cite{CT} indicates the spreading speed of converted farmers is equal to $2\sqrt{d_c(1+s)}$.
\end{remark}

At last, we show our main results about the asymptotic profiles of solutions in the final zone. The precise results will be presented in two cases: the case $a>d_c(1+s)$ and the case $a<d_c(1+s)$, respectively. The critical case $a=d_c(1+s)$ will not be discussed in the present paper. Moreover, for the each case, the profiles of solutions are also depending on the value of $g$. For the high conversion rate case, we not only show that all of the original hunter-gatherers convert to farmers at last,  but also investigate the explicit profiles ({\it the bump phenomena}) of the $F$-component and $C$-component in the final zone. For the low conversion rate case, we find that the $F$ converges to $0$ global uniformly.  Then,
we can investigate how the $C$ and $H$ behave in the finial zone by considering the dynamics of the underlying ODE system:
\begin{equation}\label{Ode system}
\left\{
\begin{aligned}
&C_t=C(1-C)+sCH,\\
&H_t=bH(1-H-gC).
\end{aligned}
\right.
\end{equation}
We expect the solution of the PDE system (\ref{fch-equation}) to uniformly converge to the equilibrium $(C^*,H^*)$ in the final zone as $t\to+\infty$. We prove the conjecture by the established fact that a strict Lyapunov function exists.

\begin{theorem}\label{th:profile c_c>c_f}
Assume $d_c(1+s)>a$. Let $(F,C,H)$ be the solution of \eqref{fch-equation} with the initial data satisfying \eqref{initial data}. For any $g>0$, one has
\begin{equation}\label{F unif to 0 for all x}
\limsup_{t\rightarrow\infty}\sup_{x\in{\mathbb R}}F(t,x)=0.
\end{equation}
Moreover, it holds:
\begin{itemize}
\item[(1)] {\bf{(high conversion rate case)}} if $g\ge 1$, then for any $c_1<c^*$, one has
\begin{equation}\label{asympt C H g>1 a<1+s}
\limsup_{t\rightarrow\infty}\sup_{|x|\le c_1 t}\Big(|C(t,x)-1|+H(t,x)\Big)=0.
\end{equation}
\item[(2)] {\bf{(low conversion rate case)}} if $g<1$, then for any $c_1<c^*$, one has
\begin{equation}\label{asympt C H g<1 a<1+s}
\limsup_{t\rightarrow\infty}\sup_{|x|\le c_1 t}\Big(|C(t,x)-C^*|+|H(t,x)-H^*|\Big)=0.
\end{equation}
\end{itemize}
\end{theorem}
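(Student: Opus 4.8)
\section*{Proof proposal for Theorem \ref{th:profile c_c>c_f}}

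The plan is to treat the three assertions in sequence, using the already-established propagation estimates (Theorems \ref{th:upper bound propagation} and \ref{th:lower bound propagation}) as the starting point and then squeezing the components via comparison with scalar KPP equations and the ODE dynamics \eqref{Ode system}.

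\textbf{Step 1: global uniform decay of $F$ (proof of \eqref{F unif to 0 for all x}).} Here the hypothesis $d_c(1+s)>a$, i.e. $c_c>c_f$, is essential. The idea is that $F$ alone can only spread with speed $c_f=2\sqrt a$, but it is embedded in an environment ($C+H$) that has already saturated up to the faster speed $c^*=c_c$, so the logistic term $a F(1-F-C)$ becomes strongly negative on the bulk region and $F$ gets crushed. Concretely, $F$ is a subsolution of $\partial_t\overline F=\partial_{xx}\overline F+a\overline F(1-\overline F)$, which by Proposition \ref{prop of kpp equation} gives $\limsup_t\sup_{|x|\ge ct}F=0$ for any $c>c_f$; this already handles $|x|\ge ct$ with $c_f<c<c^*$. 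For the inner region $|x|\le ct$, I would fix $c$ with $c_f<c<c^*$ and use Theorem \ref{th:lower bound propagation} (applied with some $c_1$, $c<c_1<c^*$) to get $F+C\ge 1-\varepsilon$ on $|x|\le c_1 t$ for large $t$; hence on $|x|\le ct$ the $F$-equation satisfies $\partial_t F\le \partial_{xx}F+aF(1-F-C)\le\partial_{xx}F+aF(\varepsilon-F)$. Building a supersolution on the moving box $\{|x|\le ct\}$ that starts above $F$ at the initial time of the estimate, decays like the solution of $v'=av(\varepsilon-v)$ in the interior, and dominates the (already small) boundary data coming from the outer-region estimate, I conclude $\limsup_t\sup_{|x|\le ct}F\le\varepsilon$ uniformly; letting $\varepsilon\to 0$ and combining with the outer estimate gives \eqref{F unif to 0 for all x}.

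\textbf{Step 2: the high conversion rate case $g\ge 1$ (proof of \eqref{asympt C H g>1 a<1+s}).} With $F\to 0$ uniformly, the pair $(C,H)$ is, up to an $o(1)$ error, governed by the two-component system obtained by setting $F=0$ in \eqref{fch-equation}; in particular $G:=F+C$ satisfies $\partial_tG=\partial_{xx}G+G(1+sH-G)+ (d_c-1)\partial_{xx}C$, which is awkward because of the $(d_c-1)\partial_{xx}C$ term, so instead I would argue directly on $C$ and $H$. From Theorem \ref{th:lower bound propagation} with $g\ge 1$ we already have $\limsup_t\sup_{|x|\le c_1t}H\le\max\{0,1-g\}=0$, so $H\to 0$ uniformly on $|x|\le c_1 t$. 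Feeding $H=o(1)$ and $F=o(1)$ into the $C$-equation yields $\partial_tC\ge d_c\partial_{xx}C+C(1-o(1)-C)$, so $C$ is an (asymptotic) supersolution of a KPP equation with stable state $1-o(1)$; combined with the trivial upper bound $C\le 1+o(1)$ (from $F+C\le 1+o(1)$, which follows from the same lower-estimate machinery or a direct supersolution for $G$), a standard squeezing on the moving box $|x|\le c_1 t$ forces $C\to 1$ uniformly there. This gives \eqref{asympt C H g>1 a<1+s}.

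\textbf{Step 3: the low conversion rate case $g<1$ (proof of \eqref{asympt C H g<1 a<1+s}).} Again using $F\to 0$ uniformly, on the moving box $|x|\le c_1 t$ the pair $(C,H)$ is an asymptotically autonomous perturbation of the ODE system \eqref{Ode system}. The ODE \eqref{Ode system} has the unique positive equilibrium $(C^*,H^*)$ with $C^*=(1+s)/(1+sg)$, $H^*=(1-g)/(1+sg)$, and — as the paragraph preceding the theorem asserts — admits a strict Lyapunov function on the positive quadrant (of the standard Lotka–Volterra / competition type, e.g. $V(C,H)=\int_{C^*}^{C}\frac{\xi-C^*}{\xi}\,d\xi+\frac{\lambda}{b}\int_{H^*}^{H}\frac{\eta-H^*}{\eta}\,d\eta$ for a suitable $\lambda>0$), so every positive orbit converges to $(C^*,H^*)$. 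To upgrade this to the PDE on the moving box I would proceed in two sub-steps: (i) show that $(C,H)$ is eventually trapped in a fixed compact subset $K\Subset(0,\infty)^2$ uniformly on $|x|\le c_2 t$ for $c_1<c_2<c^*$ — the lower bounds $C\ge c>0$, $H\ge c>0$ come from comparison with KPP-type lower barriers (using $s>0$ to keep $C$ from vanishing and, for $H$, that $gC\le g(1+o(1))<1$ keeps the $H$-reaction positive near $0$), and the upper bounds from the global-in-time a priori bounds already available; (ii) run a limiting/compactness argument: take any sequence $(t_n,x_n)$ with $|x_n|\le c_1 t_n$, $t_n\to\infty$, translate, extract a locally uniform limit $(\tilde C,\tilde H)$ which is an entire solution of the $F=0$ system on all of $\mathbb R^2$ trapped in $K$, and use the strict Lyapunov function together with a LaSalle-type invariance argument (the spatial diffusion only helps, since $V$ is also an energy-type functional for the PDE up to a nonnegative dissipation term) to conclude $(\tilde C,\tilde H)\equiv(C^*,H^*)$; since the limit is independent of the sequence, the convergence is uniform on $|x|\le c_1 t$, which is \eqref{asympt C H g<1 a<1+s}.

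\textbf{Main obstacle.} The genuinely delicate point is Step 3(ii): passing from the ODE Lyapunov structure to uniform convergence of the PDE on the \emph{moving} region $|x|\le c_1 t$. One must rule out that the solution lingers near the boundary of the box or that the $o(1)$ error from $F$ and from the region-matching interacts badly with the slow ($1/t$-type) approach to equilibrium; handling this requires the uniform trapping in a fixed compact set $K\Subset(0,\infty)^2$ (Step 3(i)), parabolic regularity to get compactness of translates, and care that the Lyapunov functional's dissipation controls both the reaction and the gradient terms. Step 1's inner-region supersolution on a moving box, and the fact that the environment's saturation propagates at the strictly larger speed $c_c>c_f$, is the other place where the estimates must be made quantitative rather than soft.
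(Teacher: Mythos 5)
Your overall architecture matches the paper's (first kill $F$ uniformly, then treat the two cases via limit systems and a Lyapunov function), but there are two genuine gaps, both at exactly the places where the paper has to work hardest.

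First, Step 1. From Theorem \ref{th:lower bound propagation} you only get $F+C\ge 1-\varepsilon$ on the bulk, which yields $1-F-C\le\varepsilon$ and hence only $\partial_tF\le\partial_{xx}F+a\varepsilon F$ --- a bound compatible with slow exponential \emph{growth} of $F$. Your claimed inequality $aF(1-F-C)\le aF(\varepsilon-F)$ is equivalent to $C\ge 1-\varepsilon$, which is not what the lower-bound theorem gives; and proving $C\ge 1-\varepsilon$ on the inner region $|x|\le c_ft$ (where $F$ is not a priori small) is essentially the statement you are trying to prove, so the argument is circular. The difficulty is that in the $(F,C)$ subsystem the two components compete with equal strength (both see the same factor $1-F-C$), so this is a \emph{critical} Lotka--Volterra competition problem: nothing in the zeroth-order structure favors $C$ over $F$, and the fact that $C$ wins comes only from $d_c(1+s)>a$, i.e.\ from the ordering of invasion speeds. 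The paper resolves this by comparing $(F,C)$ with the critical competition system \eqref{critical competition system} and constructing the heat-kernel-weighted super-solutions of Proposition \ref{prop:inequality super solution} (of the form $\overline u=t^{\frac{1-d_c}{2}}(1-e^{-\tau t})\alpha(t,x)$ with $\alpha$ solving a heat equation, matched on the moving boundary $|x|=c_0t$ using the exponential estimates of Proposition \ref{prop: lower estimate on C a<1+s}), following Alfaro--Xiao. No soft ODE supersolution on the moving box replaces this; it is the core of the proof of \eqref{F unif to 0 for all x}.

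Second, Step 3(i). Your justification of the uniform positive lower bound for $H$ rests on ``$gC\le g(1+o(1))<1$'', but in the low conversion rate case $C$ does \emph{not} stay near $1$: it converges to $C^*=(1+s)/(1+sg)>1$, and the only a priori upper bound available before the conclusion is $C\le 1+s$ (sharpened to $1+s(1-g/4)$ in Proposition \ref{prop: C<1+s H<1}). Since $g(1+s)$ may well exceed $1$, the reaction $bH(1-H-gC)$ can be negative near $H=0$, and a KPP-type lower barrier for $H$ simply does not exist pointwise. This is why the paper devotes Section 4.3.1 to Proposition \ref{prop: uniform lower estimate on H g<1}: the persistence of $H$ is obtained by a contradiction/compactness argument (Lemma \ref{low estimate of H 3}) in which one passes to a limiting orbit, shows that along it $F\to0$ and $C\le 1$ (not $1+s$), and then uses a hair-trigger/stationary-sub-solution argument for $d_h p''+bp(1-g-p)=0$ to force $H$ back up. Your Step 3(ii) is then fine in outline (the paper invokes the stabilization theorem of Guo--Shimojo rather than running LaSalle by hand), but it cannot start without the correct version of (i).
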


\begin{remark}\label{rm: no condition on g and b}
Different with Theorem 1.10 in \cite{MX}, here we do not need any hypothesis on values of $g$ and $b$, due to a more involved analysis on the limit system.
\end{remark}

Our next result reveals the reason why bumps have been observed on the profiles of $F$ and $C$ when $d_c(1+s)>a$ and $g>1$. Roughly speaking, the profile of the solution in the region $\vert x \vert \leq \varepsilon_* t$ is more similar like the Heat equation type.
\begin{theorem}[Bump phenomenon]\label{th: bump}
Assume $d_c(1+s)>a$ and $g>1$. Let $(F,C,H)$ be the solution of \eqref{fch-equation} with the initial data satisfying \eqref{initial data}. Denote
$$
k^*:=\min\left(\frac{1}{2d_c},\frac{d_c}{2}\right), \quad d^*:=\max(1,d_c).
$$
Then for $\varepsilon_*>0$ small enough and $0<\theta<\frac{1}{2}$,  there exist $C_2>C_1>0$, $C_3>0$, and $T>0$ such that both
\begin{equation}\label{profile F the solution small x}
C_1t^{-\frac{1}{2}}e^{-\frac{x^2}{4t}}\le F(t,x)\le C_2t^{-k^*}e^{-\frac{x^2}{4d^*t}},\ 
\end{equation}
\begin{equation}\label{profile of C the solution small x}
 1-C_2t^{-k^*}e^{-\frac{x^2}{4d^*t}}\le C(t,x)\le  1-C_1t^{-\frac{1}{2}}e^{-\frac{x^2}{4t}}+C_3t^{-(1+\theta)},
\end{equation}
hold for $t\geq T$, $\vert x\vert \leq \varepsilon_*t$. 
\end{theorem}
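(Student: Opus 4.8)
The plan is to combine the convergence in the final zone already supplied by Theorems~\ref{th:lower bound propagation} and~\ref{th:profile c_c>c_f} with four comparison arguments on the moving domain $\Omega_T:=\{(t,x): t\ge T,\ |x|\le\varepsilon_* t\}$, organised as a bootstrap.

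\textbf{Step 0 (reductions).} By Theorem~\ref{th:lower bound propagation}, $F+C\to1$ and $H\to0$ uniformly on $|x|\le c_1t$. Since $g>1$, for $t$ large the reaction in the $H$-equation obeys $b\bigl(1-H-g(F+C)\bigr)\le-\mu<0$ in the final zone, so comparison with $\partial_t\bar H=d_h\partial_{xx}\bar H-\mu\bar H$ gives $H\le K_1e^{-\mu_1t}$ there; and from $C(1-F-C)\le C(1-C)$ together with the exponential smallness of $sH(F+C)$ one gets $C\le 1+K_2e^{-\mu_2t}$ for $t$ large. Linearising~\eqref{fch-equation} at $(0,1,0)$, the reaction $aF(1-F-C)$ is quadratic in the deviation from the equilibrium line, so $F$ is, to leading order, governed by the free heat equation $\partial_tF=\partial_{xx}F$, while $1-C$ is slaved to $F$ through an equation carrying a genuine $-(1-C)$ damping; this is the mechanism behind the polynomial decay and the bump.

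\textbf{Step 1 (the defect estimate).} Set $Z:=1-F-C$ and $W:=1-C$, so $F=W-Z$. Subtracting the $F$- and $C$-equations, $Z$ solves a scalar parabolic equation of the form $\partial_tZ=d_c\partial_{xx}Z-\bigl(1+(a-1)F-Z\bigr)Z+(d_c-1)\partial_{xx}F-sH(1-Z)$, i.e. with damping $\approx-Z$ and forcing $(d_c-1)\partial_{xx}F$ plus exponentially small terms. Interior parabolic (Schauder / $L^p$) estimates turn a Gaussian bound $F\le C_2t^{-k^*}e^{-x^2/4d^*t}$ into $\|\partial_{xx}F(t,\cdot)\|_{L^\infty(|x|\le t^{1/2+\delta})}\lesssim t^{-1-k^*}$; feeding this into Duhamel's formula and using the $-Z$ damping yields $|Z|\lesssim t^{-1-k^*}$ on $|x|\le t^{1/2+\delta}$ and $|Z|$ exponentially small on $t^{1/2+\delta}\le|x|\le\varepsilon_*t$. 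In particular $(1-F-C)^{+}$ and $(1-F-C)^{-}$ are dominated, uniformly on $|x|\le\varepsilon_*t$, by an integrable-in-$t$ function. Since the Gaussian bound used here is part of the conclusion, this estimate and the upper bound on $F$ must be proved simultaneously, bootstrapping from a crude preliminary bound (Theorem~\ref{th:profile c_c>c_f} only gives $\|F(t,\cdot)\|_\infty\to0$ with no rate).

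\textbf{Step 2 (the four comparisons).} \emph{(a) Upper bound for $F$.} Compare $F$ on $\Omega_T$ with $\bar F=C_2(t+\tau)^{-k^*}e^{-x^2/4d^*(t+\tau)}-|B|(t+\tau)^{-k^*-\theta_1}e^{-x^2/4d^*(t+\tau)}$, $0<\theta_1<k^*$. The width $d^*=\max(1,d_c)$ is the common Gaussian width compatible with both the $F$-equation (diffusivity $1$) and the $W$-equation (diffusivity $d_c$), the sign of the width--diffusivity mismatch term being favourable exactly for this choice; the exponent $k^*=\min(\tfrac1{2d_c},\tfrac{d_c}2)$ is the largest one for which both supersolutions remain admissible (the $F$-supersolution forces $\le\tfrac1{2d_c}$, the $W$-supersolution forces $\le\tfrac{d_c}2$), and the second, lower-order term in $\bar F$ absorbs the residual near $x=0$ once $(1-F-C)^{+}\lesssim t^{-1-k^*}$ is inserted. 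On $|x|=\varepsilon_*t$ one needs $F$ exponentially small in $t$; for $\varepsilon_*$ small this holds because one is then well behind the $F$-bump (which travels at speed $\simeq c_f<c_c$), where $F+C\ge1$ makes $F$ a subsolution of the heat equation and hence Gaussian-small --- a preliminary estimate based on the spreading results. This yields the upper half of~\eqref{profile F the solution small x}. \emph{(b) Lower bound for $C$}, i.e. $W\le C_2't^{-k^*}e^{-x^2/4d^*t}$: compare $W$ with $\simeq2\bar F$, using the $-W$ damping, $(1-W)F\le F$ and part~(a); after renaming constants this is the left inequality of~\eqref{profile of C the solution small x}. \emph{(c) Lower bound for $F$.} Compare $F$ from below with $\underline F=C_1e^{-a\int_T^t\eta(\sigma)\,d\sigma}(t+\tau)^{-1/2}e^{-x^2/4(t+\tau)}$, where $\eta(t)=\sup_{|x|\le\varepsilon_*t}(1-F-C)^-(t,x)$, cut off to a paraboloid strictly inside $\Omega_T$; since $\int_T^\infty\eta<\infty$ by Step~1, the prefactor stays bounded below and is absorbed into $C_1$, giving the lower half of~\eqref{profile F the solution small x}. \emph{(d) Upper bound for $C$}, i.e. $C\le1-C_1t^{-1/2}e^{-x^2/4t}+C_3t^{-(1+\theta)}$: since $W=Z+F$ and $F\ge C_1t^{-1/2}e^{-x^2/4t}$ by~(c), it suffices to have $Z\ge-C_3t^{-(1+\theta)}$ on $|x|\le\varepsilon_*t$, which is the negative part of the Step~1 estimate.

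The main obstacle is the coupling in the bootstrap of Steps~1 and~2(a): $\bar F$ is admissible only if $(1-F-C)^{+}$ is of order $t^{-1}$ with a good constant, while the defect estimate delivering this smallness needs a pointwise bound on $\partial_{xx}F$, hence on $F$; closing this loop while keeping the sharp exponents $k^*$ and $d^*$ (rather than lossy ones), together with the separate proof that $F$ is exponentially small on the receding boundary $|x|=\varepsilon_*t$ for $\varepsilon_*$ small, are the delicate points. The $H$-component plays almost no role, being exponentially small throughout $\Omega_T$.
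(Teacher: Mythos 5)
Your proposal correctly identifies the shape of the answer and several true mechanisms (exponential decay of $H$, the role of $1-F-C$ as a small defect, the origin of $k^*$ from the competition between the two diffusivities), but it has two genuine gaps, and the second one is the very point where the paper's argument is structurally different from yours. First, the bootstrap you flag as ``the main obstacle'' --- the upper barrier $\bar F$ is admissible only if $(1-F-C)^{+}\lesssim t^{-1}$, while your Duhamel/Schauder estimate for $Z=1-F-C$ needs the Gaussian bound on $F$ as input --- is never closed; acknowledging the circularity is not the same as resolving it. The paper avoids this loop altogether: it compares $(F,C)$, viewed as a sub-solution of the critical Lotka--Volterra system \eqref{critical competition system}, with an explicit pair $(\overline u,\underline v)$ in which $\overline u+\underline v=V_1(x-c_1t)+V_1(-x-c_1t)-1$ for a KPP traveling wave $V_1$ of speed $c_1>c_c$ (see \eqref{definition of super sol system}, \eqref{definition of super sol system d>1}). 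The defect $1-\overline u-\underline v=2-V_1(\xi_+)-V_1(\xi_-)$ is then exponentially small by the traveling-wave tail estimates \eqref{estimate of 1-V super sol}, with no self-referential estimate needed, and the diffusivity mismatch in the $N_2$-inequality is absorbed by the prefactor $t^{(1-d_c)/2}$ (resp.\ $t^{(d_c-1)/(2d_c)}$) multiplying a heat-kernel solution; this prefactor times $t^{-1/2}$ is exactly where $k^*$ and $d^*$ come from. The sub-solution \eqref{definition of sub sol system} plays the symmetric role, with the $B_4t^{-(1+\theta)}$ term producing your $C_3t^{-(1+\theta)}$ correction.

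Second, your treatment of the lateral boundary is wrong in a way that cannot be patched as stated. You claim $F$ is exponentially small on $|x|=\varepsilon_*t$ for $\varepsilon_*$ \emph{small} because $F+C\ge 1$ makes $F$ a heat subsolution; but a heat subsolution emanating from data that is only known to be bounded (or decaying like $e^{-\sqrt a|x|}$ ahead of the front) is not Gaussian-small at $|x|=\varepsilon_*t$ --- the heat semigroup applied to $e^{-\sqrt a|x|}$ is of order $e^{(a-\sqrt a\,\varepsilon_*)t}$ there, which is exponentially \emph{large} for small $\varepsilon_*$, and no preliminary Gaussian bound on $F$ inside the final zone is available from the spreading theorems. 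The paper's resolution is the opposite of yours: the comparison is run on the \emph{large} domain $|x|\le c_0t$ with $\max\{c_f,2\sqrt{d_c}\}<c_0<c_c$, so that the lateral boundary lies ahead of the $F$-front, where \eqref{well known sup slo} gives $F(t,\pm c_0t)\le A_1e^{-\sqrt a(c_0-c_f)t}$ and \eqref{C>1-e^-t} gives $C(t,\pm c_0t)\ge 1-e^{-\gamma_ct}$; the restriction to $|x|\le\varepsilon_*t$ occurs only when reading off the final Gaussian estimates. Relatedly, your lower barrier in Step 2(c), ``cut off to a paraboloid strictly inside $\Omega_T$,'' would only yield the lower bound for $|x|\lesssim\sqrt t$, not on all of $|x|\le\varepsilon_*t$; the paper instead uses $\underline u=\beta(t)\alpha_1-\alpha_2$ with $\alpha_2(0,x)=B_3e^{-k|x|}$, $k=c_0/2$, chosen so that $\underline u\le 0$ precisely on $|x|=c_0t$ while remaining a genuine Gaussian lower bound throughout the interior.
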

It is obviously that $F(t,x)\to 0$ and $C(t,x)\to 1$ with polynomial orders for $|x|\le t^{\frac{1}{2}}$, but polynomial orders for $|x|\ge t^{\frac{1}{2}+\delta}$ with any $\delta>0$.

The following theorem, which provides an estimate of $F$ on the wavefront, indicates the exsitence of a small peak of $F$ on the wavefront.
\begin{theorem}\label{th:front peak F}
Let $(F,C,H)$ be the solution of \eqref{fch-equation} with initial data $(F_0,C_0,H_0)$ satisfying \eqref{initial data}. If $a>1+s$, then there exists $\varepsilon_0>0$ such that
\begin{equation}\label{front peak F 2}
\limsup_{t\rightarrow\infty}\sup_{c_1 t\le |x|}F(t,x)\ge \varepsilon_0\quad{\rm for\ all}\quad c_1<c^*.
\end{equation}
\end{theorem}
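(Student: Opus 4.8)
The plan is to show that if $a>1+s$ (so that $c_f=2\sqrt{a}>c_c=2\sqrt{d_c(1+s)}$, hence $c^*=c_f$), then $F$ cannot simply vanish on the leading edge: a genuine advancing peak of initial farmers is produced at speed $c_f$. The strategy is a direct comparison argument. The key point is that far out on the leading edge, where $C$ and $1-H$ are small by Theorem \ref{th:upper bound propagation}, the $F$-equation is essentially the linear KPP equation $\partial_t F = \partial_{xx}F + aF$ up to a small perturbation, so $F$ should spread at speed $c_f=2\sqrt a$. First I would fix $c_1<c^*=c_f$ and choose an intermediate speed $c_1<c'<c_f$. By Theorem \ref{th:upper bound propagation}, for any $\eta>0$ there is $T_\eta$ with $C(t,x)+|1-H(t,x)|\le \eta$ whenever $|x|\ge c' t$ and $t\ge T_\eta$; choosing $\eta$ small ensures that in the region $\{|x|\ge c' t\}$ the reaction term obeys $aF(1-F-C)\ge aF(1-F-\eta)$.

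Next I would construct a compactly supported, moving subsolution on the cone $\{c' t \le |x| \le c_f t\}$ (or a slightly thinned version of it). The natural candidate is of the form $\underline F(t,x) = \delta\,\phi\!\big(x - \xi(t)\big)$ where $\xi(t)$ is a trajectory with $\dot\xi \to c'' \in (c',c_f)$ and $\phi$ is a smooth bump (for instance $\phi(y)=\cos(\mu y)$ truncated to its first positive arch, with $\mu$ chosen so that $\mu^2 < a(1-\eta)$). The parabolic term contributes $-\mu^2$, the drift contributes $-\dot\xi\,\phi'/\phi$ which is controlled on the support, and the reaction contributes $\approx a(1-\eta)$; taking $\delta$ small and $\mu$ small enough makes the residual negative, so $\underline F$ is a subsolution of the $F$-equation inside the cone as long as its support stays inside $\{|x|\ge c' t\}$, which holds for $t$ large since $c''>c'$. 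To start the comparison, I must know that $F$ is already positive of size $\gtrsim\delta$ somewhere near the inner edge $x\approx c' t$ at some large time; this follows from the lower bound Theorem \ref{th:lower bound propagation} (which gives $F+C\ge 1-o(1)$ for $|x|\le c_1 t$) combined with an upper bound on $C$ near that location — in the high-conversion regime one may instead use a parabolic Harnack inequality applied to $F$ itself, using that $F\not\equiv0$ and $F$ solves a linear parabolic equation with bounded coefficients, to propagate positivity of $F$ from the bulk out to $x\sim c' t$ with at worst an exponentially small but strictly positive lower bound, which can then be boosted back to order $\delta$ by letting the subsolution "grow into" the bump as it is carried outward (using that the cosine-arch subsolution is increasing in time in sup-norm toward $\delta$ when the residual is strictly negative — alternatively renormalize so that $\underline F$ has a fixed small amplitude and only needs the initial data to dominate a tiny multiple of it).

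Once the subsolution ordering $F(t_0,\cdot)\ge \underline F(t_0,\cdot)$ holds at some large $t_0$ and is maintained on the lateral boundaries $|x|=c't$ (where $\underline F=0$ by support, and $F\ge 0$) and $|x|=c_f t$ (same), the comparison principle for the \emph{scalar} $F$-equation — which is valid because, treating $C$ and $H$ as given coefficients, the $F$-equation is a single scalar parabolic equation — yields $F(t, \xi(t)) \ge \delta\,\phi(0) =: 2\varepsilon_0 >0$ for all large $t$, with $\xi(t)\sim c'' t$ and $c_1<c''<c^*$. Since this holds for every $c_1<c^*$ after re-choosing $c''$ appropriately, we obtain $\limsup_{t\to\infty}\sup_{c_1 t\le|x|}F(t,x)\ge\varepsilon_0$, which is the claim; the two-sided estimate follows by the evenness considerations or by running the same construction on the left branch $x\le -c't$.

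The main obstacle I anticipate is the \emph{initialization step}: certifying that $F$ is bounded below by a fixed positive constant (independent of how far out we are, relative to the $o(1)$ cone) at the inner edge of the cone where the subsolution must be launched. In the low-conversion case $g<1$ this is delicate because Theorem \ref{th:profile c_c>c_f}-type results (and Figures 3–4) suggest $F$ may actually be small in the bulk; however, the hypothesis here is $a>1+s$, which (when $d_c=1$) is exactly the regime $a>d_c(1+s)$ excluded from Theorem \ref{th:profile c_c>c_f}, so $F$ genuinely spreads. The clean way around the difficulty is to avoid needing order-one data and instead exploit that the cosine-arch subsolution with strictly negative residual is a \emph{strict} subsolution: starting from \emph{any} positive lower bound for $F$ near $x=c't$ at time $t_0$ — obtained from the strong maximum principle / Harnack inequality for the scalar linear equation satisfied by $F$, since $F\not\equiv 0$ — one lets the amplitude of the subsolution increase (it converges to $\delta$) so that after a further bounded time it reaches its limiting amplitude regardless of the small starting size. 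This reduces the whole theorem to (i) Theorem \ref{th:upper bound propagation} for the coefficient control, (ii) the scalar comparison principle, and (iii) a standard Harnack estimate, with the bump construction being the only genuinely new computation. $\square$
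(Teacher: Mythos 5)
There is a genuine gap, and it sits at the foundation of your construction. You invoke Theorem \ref{th:upper bound propagation} to get $C(t,x)+|1-H(t,x)|\le\eta$ on $\{|x|\ge c't\}$ for some $c'<c^*$. But \eqref{upper C H} only holds for speeds $c_2>c^*$; it says nothing about $C$ in the intermediate cone $\{c't\le |x|\le c^*t\}$ with $c'<c^*$. In that cone $C$ can be of order one (indeed Theorem \ref{th:lower bound propagation} forces $F+C\ge 1$ there, and the numerics in Figures 1 and 3 show $C$ close to $1$ or $C^*$ well inside the front), and $C$ may even exceed $1$, making the reaction term $aF(1-F-C)$ negative. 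This breaks both halves of your argument: the cosine-arch subsolution computation needs $1-C-F$ bounded below by a positive constant on the moving support, and the initialization needs a positive lower bound on $F$ itself (not on $F+C$) near $x\approx c't$, which is exactly the quantity the theorem is trying to bound from below — Harnack propagated over a distance $\sim c't$ only yields a bound decaying in $t$, and the "strict subsolution grows to its limiting amplitude" trick cannot rescue you if the reaction coefficient is not under control where the bump lives. Note also that the only region where the paper \emph{does} give smallness of $C$ is $|x|\ge c_2t$ with $c_2>c^*=2\sqrt a$, but there \eqref{upper F} forces $F\to 0$ as well, so there is no cone in which your two requirements (small $C$, launchable positive $F$) are simultaneously known.

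The natural way to close the argument is by contradiction rather than by a constructive subsolution for $F$: suppose $\sup_{|x|\ge c_1t}F(t,x)\le\varepsilon_0$ for all large $t$. Then on $\{|x|\ge c_1t\}$ the $C$-equation satisfies $\partial_tC\le d_c\partial_{xx}C+(1+s)C+s\varepsilon_0$, and a supersolution of the form $Ae^{-\lambda(|x|-(c_c+\delta)t)}+K\varepsilon_0$ shows that $C\le K\varepsilon_0+o(1)$ for $|x|\ge c_2t$ with any $c_2>c_c+\delta$. Choosing $c_c<c_2<c_3<c^*$ (possible precisely because $a>d_c(1+s)$ gives $c_f>c_c$) and $\varepsilon_0$ small, one gets $F+C<1$ on $c_2t\le|x|\le c_3t$, contradicting \eqref{lower F+C}. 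This uses only ingredients already established in the paper. Finally, a smaller point: your opening assertion that $a>1+s$ implies $c_f>c_c$ is false when $d_c>1$; the hypothesis needed (and presumably intended in the statement, given that the theorem is vacuously false if $c_c>c_f$ by \eqref{upper F}) is $a>d_c(1+s)$, and you should say so explicitly rather than silently identify the two conditions.
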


\section{Upper estimates on the spreading speeds}
In this section, we deal with the spreading properties of solutions of (\ref{fch-equation}) on the leading edge and complete the proof of Theorem \ref{th:upper bound propagation}. Since there is little interaction between farmers and hunter-gatherers on the leading edge, the analysis of this zone is rather straightforward. 

We first deal with the analysis of $F$ and $C$ on the leading edge and begin the proof with some simple upper estimates on the spreading speed. From $C\ge 0$ for all $(t,x)\in\mathbb{R}^+\times\mathbb{R}$, by a well-known super-solution 
\begin{equation}\label{well known sup slo}
\bar{F}:=\min\{A_1e^{-\sqrt{a}(|x|-2\sqrt{a}t)},1\},
\end{equation}
one has 
\begin{equation}\label{eq of upper estimate of F}
\underset{t\to+\infty}{\lim}\underset{|x|\ge ct}{\sup}F(t,x)\le \underset{t\to+\infty}{\lim}\underset{|x|\ge ct}{\sup}\bar{F}(t,x)=0\ \ \mbox{for all}\ \ c>c^*\ge 2\sqrt{a},
\end{equation}
provided that $A_1$ is large enough such that $\bar{F}(0,x)\ge F(0,x)$. This proves \eqref{upper F} in Theorem \ref{th:upper bound propagation}.

To get the estimate of  $C$, we need to construct another suitable super-solution. The following discussion will be divided into two cases $a\ge d_c(1+s)$ and $a<d_c(1+s)$.

\noindent{\bf{Case 1: $a\ge d_c(1+s)$}}

Note that $H(t,x)\le 1$ for all $(t,x)\in\mathbb{R}^+\times\mathbb{R}$. We define 
$$\bar C_1:=\min\{A_2e^{-{\sqrt\frac{(1+s)}{d_c}}(x-(2\sqrt{a}+\varepsilon)t)},1+2s\}$$
with sufficiently small $\varepsilon>0$ and $A_2\ge 1+2s$, and $\bar F_1=\bar F$ defined as \eqref{well known sup slo}. Then, we see $\bar F_1=o(\bar C_1)$ as $t\to\infty$ for $x\ge (2\sqrt{a}+\varepsilon)t$ if $d_c\ge 1$. Moreover, since $A_2\ge 1+2s$, it suffices to check $\bar C_1$ satisfies the super-solution inequality for $x\ge (2\sqrt{a}+\varepsilon)t$.
By some straightforward computations, there exist $T_1>0$ such that for $t\ge T_1$, one has
\begin{equation*}
\begin{aligned}
&\partial_t\bar C_1-d_c\partial_{xx}\bar C_1-\bar C_1(1-\bar C_1-F)-sH(\bar C_1+F)\\
\ge& \partial_t\bar C_1-d_c\partial_{xx}\bar C_1-(1+s)\bar C_1-s\bar F_1\\
=& \Big(\frac{2\sqrt{a(1+s)}}{\sqrt{d_c}}+\varepsilon\sqrt{\frac{1+s}{d_c}}-2(1+s)\Big)\bar C_1-s\bar F_1\ge0, 
\end{aligned}
\end{equation*}
provided that $d_c\ge 1$ and $T_1$ is large enough.

On the other hand, if $d_c<1$, we consider 
$$\hat C_1:=\min\{A_2e^{-\sqrt{d_c(1+s)}(x-(2\sqrt{a}+\varepsilon)t)},1+2s\}.$$
It still holds that $\bar F_1=o(\hat C_1)$ as $t\to\infty$ for $x\ge (2\sqrt{a}+\varepsilon)t$.
Then, by some straightforward computations, there exist $T_2>0$ such that, for $x\ge (2\sqrt{a}+\varepsilon)t$ and $t\ge T_2$, one has
\begin{equation*}
\begin{aligned}
&\partial_t\hat C_1-d_c\partial_{xx}\hat C_1-\hat C_1(1-\hat C_1-F)-sH(\hat C_1+F)\\
\ge& \partial_t\hat C_1-d_c\partial_{xx}\hat C_1-(1+s)\hat C_1-s\bar F_1\\
=& (2\sqrt{ad_c(1+s)}+\varepsilon\sqrt{d_c(1+s)}-(d^2_c+1)(1+s)\Big)\hat C_1-s\bar F_1\ge0, 
\end{aligned}
\end{equation*}
provided that $T_2$ is large enough.

\noindent{\bf{Case 2:$a<d_c(1+s)$}}

First, one can check that $\bar F_2:=\min\{A_3e^{-\sqrt{d_c(1+s)}(x-2\sqrt{d_c(1+s)}t)},1\}$ satisfies
$$\partial_t\bar F_2-\partial_{xx}\bar F_2-a(1-\bar F_2)\ge 0,$$
and hence $F(t,x)\le \bar F_2(t,x)$ for $(t,x)\in\mathbb{R}^+\times\mathbb{R}$, provided that $A_3$ is sufficiently large.

For the case $d_c\ge 1$, we define 
$$\bar C_2:=\min\{A_4e^{-{\sqrt\frac{(1+s)}{d_c}}(x-(2\sqrt{d_c(1+s)}+\varepsilon)t)},1+2s\}$$
with sufficiently small $\varepsilon>0$ and $A_4\ge 1+2s$.
Then, we see $\bar F_2=o(\bar C_2)$ as $t\to\infty$ for $x\ge (2\sqrt{d_c(1+s)}+\varepsilon)t$ if $d_c\ge 1$. 

We still only check $\bar C_2$ satisfies the super-solution inequality for $x\ge (2\sqrt{d_c(1+s)}+\varepsilon)t$.
By some straightforward computations, there exist $T_3>0$ such that, for $x\ge (2\sqrt{d_c(1+s)}+\varepsilon)t$ and $t\ge T_3$, one has
\begin{equation*}
\begin{aligned}
&\partial_t\bar C_2-d_c\partial_{xx}\bar C_2-\bar C_2(1-\bar C_2-F)-sH(\bar C_2+F)\\
\ge& \partial_t\bar C_2-d_c\partial_{xx}\bar C_2-(1+s)\bar C_2-s\bar F_2\\
=& \varepsilon\sqrt{\frac{1+s}{d_c}}\bar C_2-s\bar F_2\ge0, 
\end{aligned}
\end{equation*}
provided that $T_3$ is large enough.

On the other hand, if $d_c<1$, we consider 
$$\hat C_2:=\min\{A_4e^{-\sqrt{d_c(1+s)}(x-(2\sqrt{d_c(1+s)}+\varepsilon)t)},1+2s\}.$$
It still holds that $\bar F_2=o(\hat C_2)$ as $t\to\infty$ for $x\ge (2\sqrt{d_c(1+s)}+\varepsilon)t$.
Then, there exist $T_4>0$ such that, for $x\ge (2\sqrt{d_c(1+s)}+\varepsilon)t$ and $t\ge T_4$, one has
\begin{equation*}
\partial_t\hat C_2-d_c\partial_{xx}\hat C_2-\hat C_2(1-\hat C_2-F)-sH(\hat C_2+F)\ge\varepsilon\sqrt{d_c(1+s)}\hat C_2-s\bar F_2\ge0, 
\end{equation*}
provided that $T_4$ is large enough.

The upper estimates on $C$ follow immediately from the super-solution constructed above.
By setting $A_2$({\it{resp.}}$A_4$) large such that 
$$C(T_{1},x)\le\bar C_1(T_1,x)({\it{resp.}}C(T_{2},x)\le\hat C_1(T_2,x), C(T_{3},x)\le\bar C_2(T_3,x), C(T_{4},x)\le\hat C_2(T_4,x)),$$
and applying the comparison principle, one has 
\begin{equation}\label{eq of upper estimate of C}
\underset{t\to+\infty}{\lim}\underset{|x|\ge ct}{\sup}C(t,x)=0\ \ \mbox{for all}\ \ c>c^*,
\end{equation}
since $\varepsilon>0$ can be chosen arbitrarily small.
This proves the first part of statement \eqref{upper C H} in Theorem \ref{th:upper bound propagation}.

At the end of this section, we deal with the upper estimate of the $H$ on the leading edge. Let $\varepsilon>0$ be fixed from the super-solution of $C$(see the definition of $\bar C_1$, $\hat C_1$, $\bar C_2$, and $\hat C_2$), and define
\begin{equation}\label{sub solution of H on leading edge}
\underline H:=\max\{1-A_5e^{-\lambda_h(x-(c^*+2\varepsilon)t)},0\}\ \ \text{with}\ \ \lambda_h\le \min\{\frac{c^*}{d_h},\sqrt{\frac{1+s}{d_c}},\sqrt{d_c(1+s)}\}.
\end{equation}
We choose $A_5$ large enough such that $\underline H(t,x)=0$ for $x\le (c^*+2\varepsilon)t$ and $t\ge 0$. Moreover, by the condition of $\lambda_h$ in \eqref{sub solution of H on leading edge} and super-solutions of $F$ and $C$ constructed above, there exist $T_5>0$ such that $1-\underline H\ge g(F+C)$ for $x\ge (c^*+2\varepsilon)t$ and $t\ge T_5$.

By some straightforward computations, for $x\ge (c^*+2\varepsilon)t$, one has 
\begin{equation*}
\begin{aligned}
&\partial_t\underline H-d_h\partial_{xx}\underline H-b(1-\underline H-g(F+C))\\
\le& (d_h\lambda_h^2-(c^*+2\varepsilon)\lambda_h)(1-\underline H)-b\underline H(1-\underline H-g(C+F))\le 0.
\end{aligned}
\end{equation*}
Hence, since $\varepsilon>0$ can be chosen arbitrarily small, we obtain that 
$$\underset{t\to\infty}{\lim}\underset{|x|\ge ct}{\inf}H(t,x)\ge\underset{t\to\infty}{\lim}\underset{|x|\ge ct}{\inf}\underline H(t,x)=1\ \ \mbox{for all}\ \ c>c^*.$$
Hence, we find that $H$is always able to stay positive outside of the farmers' range, which completes the proof of Theorem \ref{th:upper bound propagation}.

\section{Lower estimates on the spreading speed for the case $d_c(1+s)>a$}
In this section, we deal with the lower estimate on the spreading speeds of solutions of system (\ref{fch-equation}) for the case  $d_c(1+s)>a$. 

\begin{proposition}\label{prop 5 of log delay}
Assume  $d_c(1+s)>a$. Then for any $R>0$, it holds:
\begin{equation}\label{location of C a<1+s}
\underset{t\to+\infty}{\liminf}\inf_{|x|\le R}\ C\Big(t,x+c^*t-\frac{3d_c}{c^*}\ln t\Big)>0,\quad\text{and}\quad\underset{t\to+\infty}{\limsup}\sup_{|x|\le R}\ C\Big(t,x+c^*t-\frac{3d_c}{c^*}\ln t\Big)<1.
\end{equation}
\begin{equation}\label{location of H a<1+s}
\underset{t\to+\infty}{\liminf}\inf_{|x|\le R}\ H\Big(t,x+c^*t-\frac{3d_c}{c^*}\ln t\Big)>0,\quad\text{and}\quad\underset{t\to+\infty}{\limsup}\sup_{|x|\le R}\ H\Big(t,x+c^*t-\frac{3d_c}{c^*}\ln t\Big)<1.
\end{equation}
\end{proposition}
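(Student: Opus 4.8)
The plan is to read this as the assertion that the front of $C$ sits at the Bramson-corrected position $m(t):=c^*t-\frac{3d_c}{c^*}\ln t$ up to an $O(1)$ error, together with the corresponding location of $H$. Under $d_c(1+s)>a$ we have $c^*=c_c=2\sqrt{d_c(1+s)}$; the linearisation of the $C$-equation at $(C,H)=(0,1)$, $F=0$ is $\partial_t C=d_c\partial_{xx}C+(1+s)C$, with critical decay $\lambda^*=\sqrt{(1+s)/d_c}$, and $\frac{3d_c}{c^*}=\frac{3}{2\lambda^*}$ is exactly the Bramson constant attached to this linearisation. The analysis lives in the moving window $\{x\ge(c^*-\delta)t\}$ around the front, where two simplifications hold: since $c_f=2\sqrt a<c^*$, the supersolution $\bar F$ of \eqref{well known sup slo} gives $F(t,x)\le e^{-\nu t}$ there for some $\nu>0$, so $F$ enters the $C$-equation only as an exponentially small moving-frame source; and $C(t,\cdot)$ has Gaussian tails for each fixed $t$ (inherited through the source $sHF$ from the compactly supported $F_0$), which places $C$ in the Bramson regime.

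\emph{Upper estimates.} Using $H\le1$ and $F\ge0$, $C$ is a subsolution of the scalar KPP equation $\partial_t u=d_c\partial_{xx}u+(1+s)u-u^2+s\bar F$. Applying the logarithmic-delay upper bound for scalar KPP fronts (Bramson, or its PDE proofs), with the exponentially small source absorbed by an $O(1)$ shift in time and space, gives $\limsup_{t\to\infty}\sup_{|x-m(t)|\le K}C(t,x)\le 1-\eta(K)<1$ for every fixed $K$; taking $K=R$ (and noting $m(t)$ differs from the genuine front by $O(1)$) yields the upper part of \eqref{location of C a<1+s}. For the upper part of \eqref{location of H a<1+s}: the lower estimate below furnishes $C\ge c_0>0$ on a slab $\{|x-m(t)|\le M\}$, $M$ large, for $t$ large; there $H$ is a subsolution of $d_h w_{xx}+bw(1-w-gc_0)$, and comparison with the stationary solution of this equation on $[-M,M]$ with endpoint values $1$ --- which dips strictly below $1$ in the interior for $M$ large --- gives $H\le1-\eta_1$ at the centre.

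\emph{Lower estimates.} One needs the matching bound $\liminf_{t\to\infty}\inf_{|x-m(t)|\le K}C(t,x)\ge c_1(K)>0$. From the supersolution side, whenever $H\ge\underline h$ on a slab around the front, $C$ is there a supersolution of $d_c\partial_{xx}u+(1-\bar F+s\underline h)u-u^2$; and on the leading edge of the $C$-front, where $C$ is as small as we like, $H$ is forced close to $1$ by its own equation and the state $H\equiv1$ ahead, so $\underline h$ may be taken arbitrarily close to $1$. Feeding this into a Bramson-type subsolution for $C$ (either the explicit logarithmically-delayed profile $\kappa\,t^{-3/2}\chi\big(x-m(t)\big)e^{-\lambda^*(x-m(t))}$ for $d_c\partial_{xx}u+(1+s)u-u^2$ on a slab trailing $m(t)$, or a Harnack-type front-location argument in the spirit of Chen {\it et al.} \cite{CT} adapted to $d_c\neq1$) shows $C$ reaches a fixed positive level at $m(t)+O(1)$, hence on $\{|x-m(t)|\le R\}$ for $t$ large; this is the lower part of \eqref{location of C a<1+s}. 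Finally, with $0<c_0\le C\le C_{\max}$ now known near $m(t)$, a front-shaped subsolution of $d_h w_{xx}+bw(1-w-gC_{\max})$ that is $\approx1$ ahead of the front and decreases to a positive value (exponentially small in $R$) at $m(t)-R$ gives $H\ge\underline h(R)>0$ there, i.e. the lower part of \eqref{location of H a<1+s}.

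The step I expect to be the main obstacle is the lower bound for $C$ at the sharp $\ln t$ scale: one must run the $C$-subsolution with linear coefficient equal to $1+s$ \emph{in the limit}, which requires controlling the $C$--$H$ feedback at the $O(1)$ scale --- knowing $H\to1$ at the leading edge without any loss, since any deficit degrades the Bramson constant $\frac{3d_c}{c^*}$. This couples a logarithmic-delay analysis to the non-cooperative interaction between $C$ and $H$, and is where the genuine work lies; the remaining estimates are comparison arguments against essentially classical barriers.
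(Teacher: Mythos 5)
A preliminary remark: the paper states Proposition \ref{prop 5 of log delay} without proof --- it is invoked later in the proof of Proposition \ref{prop: lower estimate on C a<1+s} but never established in the text --- so there is no in-paper argument to compare yours against, and I can only assess your proposal on its own terms. Your identification of the structure is correct: under $d_c(1+s)>a$ one has $c^*=c_c$, the relevant linearisation is $\partial_tC=d_c\partial_{xx}C+(1+s)C$ with critical rate $\lambda^*=\sqrt{(1+s)/d_c}$, the constant $\frac{3d_c}{c^*}=\frac{3}{2\lambda^*}$ is the matching Bramson correction, and the decoupling of $F$ on the moving window via the supersolution \eqref{well known sup slo} is right.

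There are, however, two genuine gaps. First, the upper bound for $C$. The comparison equation you invoke, $\partial_tu=d_c\partial_{xx}u+(1+s)u-u^2+s\bar F$, has carrying capacity $1+s$, so the logarithmic-delay front-location theorem for this scalar equation yields at best $\limsup_{t\to\infty}\sup_{|x|\le R}C(t,m(t)+x)\le\sup_{|x|\le R}U(x+x_0)$, where $U$ is the minimal wave connecting $1+s$ to $0$ and $x_0$ is a shift you do not control; this is $<1+s$ but not $<1$, and for $R$ large it is in fact close to $1+s$. The claimed bound $1-\eta(K)$ therefore does not follow from the scalar comparison; to cap $C$ strictly below $1$ in the window one must use the depletion of $H$ at and behind the front, i.e.\ genuinely coupled information, and that argument is absent. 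Second, the lower bound for $C$ at the sharp $\ln t$ scale. You correctly flag this as the crux, but the sketch does not resolve it: knowing only that $H\ge 1-\delta$ ahead of the front for each fixed $\delta>0$ degrades the linear rate to $1+s(1-\delta)$, which costs $O(\delta t)$ in position --- far worse than the $O(\ln t)$ accuracy required. What is needed (in the spirit of \cite{PWZ} or \cite{CXZ}) is a quantitative, time-integrable bound on the deficit $1-H$ (and on $F$) in the region where the Bramson subsolution lives, e.g.\ exponential or $O(t^{-1-\gamma})$ decay, so that the accumulated loss amounts only to an $O(1)$ spatial shift; without that mechanism the proposal does not actually produce the constant $\frac{3d_c}{c^*}$. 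The $H$-estimates are comparatively routine once both $C$-estimates are in hand, and your treatment of them is essentially fine modulo making the comparison on a moving slab precise.
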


\begin{proposition}\label{prop: lower estimate on C a<1+s}
Assume $d_c(1+s)>a$. Then for any $c_f<c_1<c_2<c_c$, it holds
\begin{equation}\label{C>=1 a<1+s}
\liminf_{t\to\infty}\inf_{c_1t\le |x|\le c_2t}C(t,x)\ge 1,\quad\text{and}\quad \limsup_{t\to\infty}\sup_{c_1t\le |x|\le c_2t}H(t,x)\le \max\{0,1-g\}.
\end{equation}
Moreover, there exists $\gamma_c>0$ such that
\begin{equation}\label{C>1-e^-t}
\inf_{c_1t\le|x|\le c_2t}C(t,x)\ge 1-e^{-\gamma_ct}\quad\text{for all}\quad t>T. 
\end{equation}
\end{proposition}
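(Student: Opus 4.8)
The plan is to combine the pointwise lower bound for $C$ near the front, provided by Proposition \ref{prop 5 of log delay}, with the scalar hair‑trigger effect, exploiting that $F$ has already been driven to zero in the relevant zone by Theorem \ref{th:upper bound propagation}. Since $d_c(1+s)>a$ we have $c^*=c_c$, and the front of $C$ sits near $\ell(t):=c^*t-\tfrac{3d_c}{c^*}\ln t$; I work with $x>0$, the case $x<0$ being symmetric. First I would fix $\epsilon>0$ and an auxiliary speed $\widetilde c_1\in(c_f,c_1)$; by \eqref{upper F} there is $T_\epsilon$ with $F\le\epsilon$ on $\{t\ge T_\epsilon,\ |x|\ge\widetilde c_1 t\}$, so, using $sH(F+C)\ge0$, $C$ satisfies the scalar KPP inequality $\partial_t C\ge d_c\partial_{xx}C+C(1-\epsilon-C)$ on that region; moreover the explicit super‑solution \eqref{well known sup slo} gives the quantitative decay $F\le A_1e^{-\mu_F t}$ there for some $\mu_F>0$, which I will need for \eqref{C>1-e^-t}.

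The core step is to fill the intermediate window $[c_1t,c_2t]$. Given $x\in[c_1t,c_2t]$ with $t$ (hence $x$) large, let $t_0=t_0(x)$ be defined by $\ell(t_0)=x$, so $t_0=x/c^*+O(\ln x)$; by Proposition \ref{prop 5 of log delay} there are fixed $\delta_0,R>0$ with $C(t_0,\cdot)\ge\delta_0$ on $[x-R,x+R]$. Starting from this datum I would insert the Aronson--Weinberger expanding‑bump subsolution of $\partial_t u=d_c\partial_{xx}u+u(1-\epsilon-u)$ with support in $[x-R-c\tau,x+R+c\tau]$ at time $t_0+\tau$, where $c>0$ is chosen small — in particular $c<2\sqrt{d_c(1-\epsilon)}$ so that such a subsolution exists, and small enough that $\widetilde c_1^{+}:=(\widetilde c_1+c)/(1+c/c^*)<c_1$. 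The comparison $C\ge(\text{subsolution})$ is licit as long as this support stays inside $\{y\ge\widetilde c_1t'\}$, i.e. for $t'\le(x+ct_0-R)/(\widetilde c_1+c)=x/\widetilde c_1^{+}+O(\ln x)$; since $\widetilde c_1^{+}<c_1$ and $t_0>x/c^*$, for $x\in[c_1t,c_2t]$ one has $t\le x/c_1<x/\widetilde c_1^{+}+O(\ln x)$, so the comparison holds up to the time $t$ in question, while $t-t_0\ge x/c_2-x/c^*-O(\ln x)\to\infty$ exceeds the (fixed) saturation time of the bump. Hence $C(t,x)\ge1-2\epsilon$ on $\{c_1t\le|x|\le c_2t\}$ for $t$ large, and letting $\epsilon\downarrow0$ gives the first inequality in \eqref{C>=1 a<1+s}.

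For the estimate on $H$, I would rerun this step with slightly wider speeds $\widehat c_1\in(c_f,c_1)$, $\widehat c_2\in(c_2,c_c)$ to get $C\ge1-2\epsilon$ on $\{\widehat c_1t\le|x|\le\widehat c_2t\}$; there $\partial_t H\le d_h\partial_{xx}H+bH(1-g(1-2\epsilon)-H)$, and comparison with $\overline H=\max\{0,1-g(1-2\epsilon)\}+A_6(e^{-\lambda(x-\widehat c_1t)}+e^{-\lambda(\widehat c_2t-x)})$ (small $\lambda$, large $A_6$; the supersolution check and the lateral inequalities $\overline H\ge A_6\ge1\ge H$ are of the type already carried out in Section 2) yields $H\le\max\{0,1-g+2g\epsilon\}+o(1)$ on $\{c_1t\le|x|\le c_2t\}$, hence $\limsup\sup H\le\max\{0,1-g\}$ after $\epsilon\downarrow0$ (the non‑generic $\epsilon$ with $g(1-2\epsilon)=1$ is simply avoided along the limit). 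For \eqref{C>1-e^-t} I would first take $\epsilon=\tfrac14$ to obtain $C\ge\tfrac12$ on $\{\widehat c_1t\le|x|\le\widehat c_2t\}$ for $t\ge T_w$; then $w:=(1-C)^{+}$ is a subsolution of $\partial_t w=d_c\partial_{xx}w-\tfrac12 w+F$ on that set, where $F\le A_1e^{-\mu_F t}$, and comparison with $\overline w=Ke^{-\gamma t}+A_7(e^{-\lambda(x-\widehat c_1t)}+e^{-\lambda(\widehat c_2t-x)})$, with $0<\gamma<\min\{1/2,\mu_F\}$, small $\lambda$, and $K,A_7$ large, gives $1-C\le w\le C'e^{-\gamma_c t}$ on $\{c_1t\le|x|\le c_2t\}$ with $\gamma_c=\min\{\gamma,\lambda(c_1-\widehat c_1),\lambda(\widehat c_2-c_2)\}>0$; adjusting constants yields \eqref{C>1-e^-t}.

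The hard part is the filling step. The only reaction bound for $C$ that is uniform behind the front degenerates to the plain nonlinearity $C(1-\epsilon-C)$, whose spreading speed $2\sqrt{d_c(1-\epsilon)}$ is strictly below $c^*=c_c$, so one cannot transport the saturated state forward at the speed of the true front; the resolution is that the window $[c_1t,c_2t]$ trails the front by a gap that grows linearly in $t$, leaving enough time for a slowly spreading bump to saturate, and the only genuine constraint is to keep that bump's linearly growing support strictly inside the region $\{|x|\ge\widetilde c_1t\}$ where the $F\le\epsilon$ estimate — hence the KPP inequality — is valid. The inequality $\widetilde c_1^{+}<c_1$, available because $c$ may be taken small and $c_f<c_1$, is precisely what makes this bookkeeping close; the remaining ingredients (the $\epsilon\downarrow0$ limits, the $H$‑supersolution, and the exponential refinement) are comparatively routine.
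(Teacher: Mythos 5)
Your proposal is correct and rests on the same two pillars as the paper's proof: the quantitative decay of $F$ in $\{|x|\ge \widetilde c_1 t\}$ coming from the super-solution \eqref{well known sup slo}, and the seeding device of Proposition \ref{prop 5 of log delay} — for each $x$ in the window, solve $\ell(t_0)=x$ and plant a bump at time $t_0$, checking that comparison remains licit up to time $t$ because $c_f<c_1$ leaves linear slack. This is exactly the paper's proof of its intermediate statement \eqref{aa} (there the bump is the eigenfunction sub-solution $\eta\phi_R(1+e^{-\tau_R t})$, which is stationary in shape and only yields a positive lower bound $\varepsilon$). Where you diverge is in the upgrade from a positive bound to the sharp constants: the paper passes to entire solutions along arbitrary sequences $(t_n,x_n)$, kills $F_\infty$ by the strong maximum principle, and uses the spatially homogeneous sub-/super-solutions $1-(1-\varepsilon)e^{-\varepsilon(t+t_0)}$ and $\widetilde H_{t_0}$ with $t_0\to\infty$ (Claim \ref{cl:aa}); you instead work directly with the $\epsilon$-perturbed reaction $u(1-\epsilon-u)$, a saturating expanding bump, and explicit moving super-solutions with exponential boundary layers for $H$ and for $w=(1-C)^+$. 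Your route is more quantitative (and you actually supply the argument for \eqref{C>1-e^-t} that the paper only cites from \cite{CXZ}), at the price of one step you should not gloss over: a single compactly supported expanding sub-solution does not by itself "saturate" to $1-2\epsilon$ at its center in bounded time. You need either the Dirichlet problem on a (fixed or expanding) large interval, whose solution from a small stationary sub-solution increases monotonically to a steady state close to $1-\epsilon$ in a time depending only on $\delta_0,R,\epsilon$, or else the same compactness-plus-ODE upgrade the paper uses. Once that is made explicit, the bookkeeping $\widetilde c_1^{+}<c_1$ and the rest of your argument go through.
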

\begin{proof}{\it of Proposition \ref{prop: lower estimate on C a<1+s}.}
It suffices to show  there exists $\varepsilon>0$ such that
\begin{equation}\label{aa}
\liminf_{t\to+\infty}\inf_{c_1t\le|x|\le c_2t}C(t,x)\ge\varepsilon.
\end{equation}
Indeed, we can claim that
\begin{claim}\label{cl:aa}
The statement \eqref{aa} implies that \eqref{C>=1 a<1+s} holds true.
\end{claim}
\begin{proof}{\it of Claim \ref{cl:aa}.}
Let us arbitrarily choose sequences  $\{c_n\}_{n\ge 0}\subset[c'_1,c'_2]$ with $c_1<c_1'<c_2'<c_2$, $\{t_n\}_{n\ge 0}\subset\mathbb{R}_+$ with $t_n\to+\infty$ as $n\to+\infty$, and $\{x_n\}_{n\ge 0}\subset\mathbb{R}$ with $|x_n|\le c_nt_n$. Next, we prove 
$$\lim_{n\to+\infty}H(t_n,x_n)\le \max\{0,1-g\}\ \ \mbox{and}\ \ \lim_{n\to+\infty}C(t_n,x_n)\ge1.$$
We consider the limit functions
$$\lim_{n\to+\infty}F(t_n+t,x_n+x)=F_{\infty}(t,x),$$
$$\lim_{n\to+\infty}C(t_n+t,x_n+x)=C_{\infty}(t,x),$$
$$\lim_{n\to+\infty}H(t_n+t,x_n+x)=H_{\infty}(t,x),$$
which converge locally uniformly to $(F_{\infty}, C_{\infty}, H_{\infty})$.
The super-solution \eqref{well known sup slo} of $F$  yields that $F_{\infty}(0,0)=0$, which also implies $F_{\infty}(t,x)\equiv 0$ for all $(t,x)\in\mathbb{R}\times\mathbb{R}$ by the strong maximum principle. Therefore, $(C_{\infty},H_{\infty})$ is
an entire solution of the system 
\begin{equation}\label{limit system C H}
\left\{
\begin{aligned}
&\partial_tC_{\infty}=d_c \partial_{xx}C_{\infty}+C_{\infty}(1-C_{\infty})+sC_{\infty}H_{\infty},\\
&\partial_tH_{\infty}=d_h\partial_{xx} H_{\infty}+bH_{\infty}(1-gC_{\infty}-H_{\infty}).
\end{aligned}
\right.
\end{equation}

 Note that \eqref{aa} implies  $C_{\infty}(t,x)\ge\varepsilon$ for all $(t,x)\in\mathbb{R}\times\mathbb{R}$. Moreover, one may find that $1-(1-\varepsilon)e^{-\varepsilon(t+t_0)}$ is a sub-solution of $C_{\infty}(t,x)$ for all $x\in\mathbb{R}$ and $t>-t_0$ where $t_0\in\mathbb{R}_+$. By passing $t_0\to+\infty$, one obtains that $C_{\infty}(0,x)\ge 1$. Since the sequences $\{t_n\}_{n\ge 0}$, $\{c_n\}_{n\ge 0}$ and $\{x_n\}_{n\ge 0}$ are chosen arbitrarily, one can conclude that 
\begin{equation*}
\liminf_{ t \to \infty}\inf_{c_1't\le|x|\le c_2't}C(t,x)\ge 1.
\end{equation*}
Next, we show that $H_{\infty}(t,x)\le \max\{0,1-g\}$.  Indeed, since $C_{\infty}(t,x)\ge 1$ for all $(t,x)\in\mathbb{R}\times\mathbb{R}$, one may find $H_{\infty}$ is a sub-solution of $\widetilde{H}_{t_0}$ which satisfies
\begin{equation*}
\left\{
\begin{aligned}
&\partial_t\widetilde{H}_{t_0}=d_h\partial_{xx}\widetilde{H}_{t_0}+b\widetilde{H}_{t_0}(1-g-\widetilde{H}_{t_0}),\quad x\in{\mathbb R},\ t>-t_0,\\
&\widetilde{H}_{t_0}(-t_0,x)=H_{\infty}(-t_0,x),\quad x\in{\mathbb R}.
\end{aligned}
\right.
\end{equation*}
It is clear that 
\[H_{\infty}(0,x)\le\underset{t_0\rightarrow\infty}{\lim}\,\widetilde{H}_{t_0}(0,x)=\max\{0, 1-g\}.\] 
Thus it holds that 
\[
\limsup_{t\rightarrow\infty}\sup_{c_1't\le|x|\le c_2't}H(t,x)\le \max\{0,1-g\}.
\]
The proof of the claim is complete since $c_f<c_1<c_1'<c_2'<c_2<c_c$ are chosen arbitrarily. 
\end{proof}
\vspace{10pt}

Now, let us go back to the proof of \eqref{aa}. To do this, we assume by contradiction that there exist sequences $\{t_n\}_{n\ge 0}\subset\mathbb{R}_+$ with $t_n\to+\infty$ and $\{x_n\}_{n\ge 0}\subset\mathbb{R}$ with $c_1't_n\le|x_n|\le c'_2t_n$, such that $C(t_n,x_n)\le 1/n$.
Let us introduce the principal eigenfunction $\phi_R(x)>0$ as
\begin{equation}\label{eigenfunction}
\left\{
\begin{aligned}
&-\partial_{xx}\phi_R=\mu_R\phi_R\ \ \mbox{in}\ \ B_R,\\
&\phi_R=0\ \ \mbox{on}\ \ \partial B_R,
\end{aligned}
\right.
\end{equation}
that is
normalized so that $\underset{x\in B_R}{\sup}\,\phi_R(x)=1$, and extend it by $0$ outside of the ball $B_R$. The eigenvalue $\mu_R$ is positive and satisfying $\mu_R=\mu_1R^{-2}$. 

Then, we consider a  sub-solution of the $C$-equation as 
$\varphi_R(t,x):=\eta\phi_R(x)(1+e^{-\tau_Rt})$ with $\tau_R<\sqrt{a}(c_1-c_f)$.
Note that, by the super-solution \eqref{well known sup slo} of $F$, one has $F(t,x)\le A_1e^{-\sqrt{a}(c_1'-c_f)t}$ for $|x|\ge c_1't$. 
Then one can check that, there exists $T>0$ such that 
$$\eta\partial_t \varphi_R- \eta d_c\partial_{xx}\varphi_R-\eta\varphi_R(1-\eta\varphi_R-F)\le 0,\quad\text{for all}\quad t\ge T,$$
provided that $\eta$ is small enough and $R$ is large enough.
In Proposition \ref{prop 5 of log delay}, we proved that, for all $R'>R>0$, there exists $\varepsilon'>0$ such that
$$\liminf_{t\to+\infty}\inf_{x\in B_{R'}}\ C\Big(t,x+c_ct-\frac{3d_c}{c_c}\ln t\Big)>\varepsilon'.$$
Hence, for any $t'>T_0$, one can choose $\eta$ small enough such that
\begin{equation*}
C(t',x)\ge 2\eta\varphi_R\Big(x-(c_ct'-\frac{3d_c}{c_c}\ln t')\Big)\ \ \mbox{for all}\ \ x\in\mathbb{R}.
\end{equation*}
Then by applying the comparison principle, one obtains
$$C(t,x)\ge \eta(1+e^{-\tau_Rt})\varphi_R\Big(x-(c_ct'-\frac{3d_c}{c_c}\ln t')\Big)\ \ \mbox{for all}\ \ t>t', \ \ x\in\mathbb{R}.$$
This implies that, for any $t'\ge T_0$ and $t\ge t'$, it holds
\begin{equation}\label{eq 7}
C\Big(t,(c_ct'-\frac{3d_c}{c_c}\ln t')\Big)\ge \eta\phi_R(0).
\end{equation}
Moreover, since $c'_1<c'_2<c_c$ and $c'_1t_n\le|x_n|\le c'_2 t_n$, for each large enough $n$, one can find $t_n'\in[T_0,t_n)$ such that 
$$x_n=\Big(c_ct_n'-\frac{3d_c}{c_c}\ln t_n'\Big).$$
Thus, from the estimate (\ref{eq 7}), one gets
$G
C(t_n,x_n)\ge \eta\phi_R(0)$,
which contradicts that $C(t_n,x_n)\le 1/n\to 0$ as $n\to+\infty$. 
Therefore, the proof of \eqref{aa} is complete.

The super-solution \eqref{well known sup slo} showed that
$$\inf_{c_1t\le|x|\le c_2t}F(t,x)\le A_1e^{-\sqrt{a}(c_1-c_f)t}\quad\text{for all}\quad t>0.$$
Then, by using the same argument in the proof of Lemma 1.11 in \cite{CXZ} with $b=1$, we can obtain the estimate \eqref{C>1-e^-t}. 
\end{proof}


\section{Asymptotic profiles for the case \bf{$d_c(1+s)>a$}}
We have already shown that the propagation of farmers occurs with the speed $c^*$. However, whether the profiles of solutions converges to the steady states $(\widehat{F},\widehat{C},0)$ or $(0,C^*,H^*)$ are still unknown.  In this section, we mainly deal with the asymptotic profiles of solutions in the final zone for the case $d_c(1+s)>a$. According to the numerical work of Aoki {\it et al.}, the profiles of solutions in the final zone are suppose to be different between the cases $g\ge 1$ and $g<1$.  In particular, a bump has been observed in the case $g> 1$. 

The justification of the numerical results will be split into several parts. The first part contributes to an upper estimate on $F$ and lower estimate on $C$, which holds for all $g>0$. In particular, we will show that $F$ uniformly converges to $0$ for all $x\in\mathbb{R}$, which will play an important role in investigating the asymptotic profiles of $C$ and $H$ in the final zone. 

\subsection{The upper estimate on $F$ and the lower estimate on $C$}
Throughout this subsection, we let $\max\{c_f,2\sqrt{d_c}\}<c_0<c_c$ be fixed.  First of all, we will show $F$ uniformly converges to $0$ for all $x\in\mathbb{R}$ by comparing with the solution of a system as follows:
\begin{equation}\label{critical competition system}
\left\{
\begin{aligned}
&u_t=u_{xx}+au(1-u-v),\\
&v_t=d_cv_{xx}+v(1-v-u).\\
\end{aligned}
\right.
\end{equation}
The system \eqref{critical competition system} is the so-called critical Lotka-Volterra competition system, for which the Cauchy problem with two compactly supported initial data $(u_0(x),v_0(x))$ has been well studied in \cite{AX}. More precisely, since $u,v$ have a same competition strength, Alfaro {\it{et al.}} found that the fast species dominates the habitat at last, but the slow one becomes extinct. In other words, if $a>d_c$, then $v$ uniformly converges to $0$; if $a<d_c$, then $v$ uniformly converges to $0$. Note that, in our case, the initial data is not supposed to be two compactly supported functions. However, an a priori estimate on $C(t,x)$ from Proposition \ref{prop: lower estimate on C a<1+s} roughly implies that $C$ is the fast species.

Let us recall the competitive comparison principle. Define the operators
\begin{equation}\label{def of sup and sub sol}
N_1[u,v]:=\partial_t u-\partial_{xx} u-u(1-u-v)\quad \text{ and } \quad  N_2[u,v]:=\partial_tv-d_c\partial_{xx}v-v(1-v-u)
\end{equation}
on the domain $
\Omega:=(t_1,t_2)\times(x_1,x_2)$ with  $0\le t_1<t_2\le + \infty$ and $-\infty\le x_1<x_2\le+\infty$. A super-solution is a pair  
$$(\overline{u},\underline{v})\in \Big[C^1\Big((t_1,t_2),C^2((x_1,x_2))\Big)\cap C_b\left(\overline \Omega\right)\Big]^2$$
satisfying
$$
N_1[\overline{u},\underline{v}]\geq 0 \quad  \text{and}\quad  N_2[\overline{u},\underline{v}]\leq 0\;  \text{ in } \Omega.
$$
Similarly, a sub-solution $(\underline u, \overline v)$ satisfies  $N_1[\underline{u},\overline{v}]\leq 0$ and $N_2[\underline{u},\overline{v}]\geq 0$. 

\begin{proposition}[Comparison Principle]\label{prop: cp}
Let $(\overline{u},\underline{v})$ and $(\underline{u},\overline{v})$ be a super-solution and sub-solution satisfying \eqref{def of sup and sub sol}, respectively. If
$$
\overline{u}(t_1,x)\ge \underline{u}(t_1,x) \quad \text{and} \quad \underline{v}(t_1,x)\le\overline{v}(t_1,x),\quad\text{for all } x\in (x_1,x_2),
$$
and, for $i=1,2$, 
$$
\overline{u}(t,x_i)\ge \underline{u}(t,x_i) \quad \text{and} \quad \underline{v}(t,x_i)\le\overline{v}(t,x_i),\quad\text{for all } t\in(t_1,t_2),
$$
then, it holds
$$ 
\overline{u}(t,x) \ge\underline{u}(t,x) \quad \text{ and } \quad \underline{v}(t,x)\le\overline{v}(t,x),\quad  \text{for all } (t,x)\in\Omega.
$$
 If $x_1=-\infty$ or $x_2=+\infty$, the corresponding boundary conditions can be omitted.
\end{proposition}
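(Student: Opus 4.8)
The plan is to reduce the assertion to the classical comparison principle for \emph{cooperative} (quasimonotone) parabolic systems, by means of the change of unknown $w:=-v$. Under this substitution the Lotka--Volterra competition nonlinearities become quasimonotone, and one checks directly from \eqref{def of sup and sub sol} that if $(\overline u,\underline v)$ is a super-solution of \eqref{critical competition system} then $(\overline u,\overline w):=(\overline u,-\underline v)$ is a super-solution of the cooperative system
\[
\partial_t u=\partial_{xx}u+u(1-u+w),\qquad \partial_t w=d_c\partial_{xx}w+w(1+w-u),
\]
and, symmetrically, $(\underline u,\underline w):=(\underline u,-\overline v)$ is a sub-solution of the same system (the inequality $N_2[\,\cdot\,]\le 0$ for the super-solution becomes the super-solution inequality for the $w$-equation, and likewise for the sub-solution). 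The hypotheses at $t=t_1$ and on the lateral boundaries $x=x_i$ translate verbatim into $\overline u\ge\underline u$ and $\overline w\ge\underline w$ there, and the conclusion we seek, $\overline u\ge\underline u$ and $\underline v\le\overline v$ on $\Omega$, is exactly $\overline u\ge\underline u$ and $\overline w\ge\underline w$ on $\Omega$. Hence it suffices to prove: a super-solution of the cooperative system above that dominates a sub-solution componentwise on the parabolic boundary of $\Omega$ dominates it on all of $\Omega$.

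For the cooperative comparison principle I would set $P:=\underline u-\overline u$ and $Q:=\underline w-\overline w$ and aim to show $P\le 0$ and $Q\le 0$ in $\Omega$. Subtracting the two pairs of differential inequalities and expressing the differences of the quadratic nonlinearities in terms of $P$ and $Q$ (equivalently, applying the mean value theorem), one arrives at a linear system of inequalities
\[
(\partial_t-\partial_{xx})P\le c_{11}P+c_{12}Q,\qquad (\partial_t-d_c\partial_{xx})Q\le c_{21}P+c_{22}Q\qquad\text{in }\Omega,
\]
where $c_{11},c_{22}\in L^\infty(\Omega)$ are controlled by the $C_b$-norms of $\overline u,\underline u,\overline w,\underline w$, and where the off-diagonal coefficients can be chosen to be $c_{12}=\overline u$ and $c_{21}=-\overline w=\underline v$. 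Since the super-solution under consideration has nonnegative components — all the barriers used in our applications are nonnegative — we get $c_{12}\ge 0$ and $c_{21}\ge 0$; this quasimonotonicity is precisely what the substitution $w=-v$ buys, and it is the point of the whole argument.

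The remaining step is a standard maximum-principle argument, which I would run on $\Omega_T:=(t_1,\min\{t_2,T\})\times(x_1,x_2)$ and then let $T\uparrow t_2$. Choose $K$ larger than $\|c_{11}\|_\infty+\|c_{12}\|_\infty$ and $\|c_{22}\|_\infty+\|c_{21}\|_\infty$ and set $\widehat P:=e^{-Kt}P$, $\widehat Q:=e^{-Kt}Q$, so that the effective diagonal coefficients $c_{11}-K$ and $c_{22}-K$ become negative while the off-diagonal ones remain nonnegative. Suppose, for contradiction, that $m:=\sup_{\overline{\Omega_T}}\max(\widehat P,\widehat Q)>0$. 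When the $x$-interval is bounded this supremum is attained, and since $\widehat P,\widehat Q\le 0$ on the parabolic boundary it is attained at some $(t_0,x_0)$ with $t_0>t_1$ and $x_1<x_0<x_2$; there the parabolic operator gives, say, $(\partial_t-\partial_{xx})\widehat P(t_0,x_0)\ge 0$, whereas the differential inequality together with $\widehat P,\widehat Q\le m$ and $c_{12}\ge 0$ gives $(\partial_t-\partial_{xx})\widehat P(t_0,x_0)\le(c_{11}-K+c_{12})(t_0,x_0)\,m<0$, a contradiction (and symmetrically if the maximum is realized by $\widehat Q$). Thus $P\le 0$ and $Q\le 0$, which, read back through $w=-v$, gives $\overline u\ge\underline u$ and $\underline v\le\overline v$ on $\Omega$, as claimed.

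The only genuinely non-routine point is the unbounded case $x_1=-\infty$ or $x_2=+\infty$ (for which the statement drops the corresponding lateral condition), where the supremum $m$ need not be attained. Here I would use the boundedness of $P,Q$, guaranteed by the $C_b$ hypothesis, together with the auxiliary function $\Phi_\varepsilon(t,x):=\varepsilon\,e^{\lambda(t-t_1)}(1+x^2)$, which for $\lambda$ large enough is a strict super-solution of both scalar operators $\partial_t-\partial_{xx}$ and $\partial_t-d_c\partial_{xx}$ and which tends to $+\infty$ as $|x|\to\infty$: applying the argument of the previous paragraph to $\widehat P-\Phi_\varepsilon$ and $\widehat Q-\Phi_\varepsilon$ forces these to be $\le 0$ on $\Omega_T$, and then $\varepsilon\downarrow 0$ recovers $\widehat P\le 0$ and $\widehat Q\le 0$. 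This barrier reduction, which removes the behaviour at spatial infinity, is the main obstacle; everything else is bookkeeping around the change of variables and the scalar maximum principle.
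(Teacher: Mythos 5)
The paper states Proposition \ref{prop: cp} without proof, treating it as the classical comparison principle for two-species competitive systems, so there is no in-paper argument to compare against; your route (the change of unknown $w=-v$ to obtain a cooperative system, the weighted functions $e^{-Kt}P$, $e^{-Kt}Q$, and the barrier $\varepsilon e^{\lambda(t-t_1)}(1+x^2)$ for the unbounded case) is exactly the standard machinery one would use, and those parts are fine.

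The genuine gap is the sign of the off-diagonal coefficients, which is where your proof both adds a hypothesis absent from the statement and justifies it incorrectly. You fix $c_{12}=\overline u$ and $c_{21}=\underline v$ and assert these are nonnegative because ``all the barriers used in our applications are nonnegative.'' That is false: in Proposition \ref{prop: boundary condition super sol} the super-solution component $\underline v$ is arranged, through the large choice of $B_1$, to satisfy $\sup_{|x|\le c_0T^{*}}\underline v(T^{*},x)\le 0$, so with your choice $c_{21}=\underline v$ the quasimonotonicity fails precisely on the initial slice of the region where the comparison is invoked (likewise the sub-solution component $\underline u=\beta\alpha_1-\alpha_2$ of Section 4.2 is negative near $|x|=c_0t$). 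Moreover, some sign hypothesis is genuinely needed and must be stated, because the proposition as written is false without one: on $\Omega=(0,T)\times\mathbb{R}$ take the constant super-solution $(\overline u,\underline v)\equiv(-1,0)$ and the sub-solution $(\underline u,\overline v)$ with $\overline v\equiv 3$ and $\underline u$ solving $\underline u'=\underline u(1-\underline u-3)$, $\underline u(0)=-1$; then $N_1[\overline u,\underline v]=2\ge0$, $N_2[\overline u,\underline v]=0\le0$, $N_1[\underline u,\overline v]=0\le0$, $N_2[\underline u,\overline v]=3(2+\underline u)\ge0$, the initial ordering holds, yet $\underline u'(0)=1>0$ so $\underline u>\overline u$ immediately. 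The repair is to use the freedom in splitting the cross terms: writing $\underline u\,\overline v-\overline u\,\underline v$ in the two possible ways gives $c_{12}\in\{\overline u,\underline u\}$ and $c_{21}\in\{\underline v,\overline v\}$, and in each of the paper's two applications one member of each pair is an actual solution component ($F$, resp. $C$), hence nonnegative; with that choice your maximum-principle argument goes through. As written, however, the step ``$c_{21}=\underline v\ge0$'' fails for the very barriers the proposition is applied to, and the nonnegativity assumption you rely on should be made explicit in the statement you prove.
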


\subsubsection{Construction of the super-solution}
We now construct an super-solution in the region $\vert x\vert \le c_0 t$. Fix $c_1>c_c$ and introduce $V_{1}$ as a traveling wave solution with speed $c_1$ solving
\begin{equation}\label{single-V1}
\left\{
\begin{aligned}
&d_cV_1''+c_1V_1'+V_1(1-V_1)=0,\\
&V_1(-\infty)=1,\  V_1(\infty)=0.
\end{aligned}
\right.
\end{equation} 
As well-known, $V_1'<0$ and there exist $\lambda_1>0$ and $M _1>0$ such that
\begin{equation}\label{estimate of V at infinity-un}
 1-V_1(\xi)\sim M _1 e^{\lambda _1 \xi}\; \text{ as } \xi \to -\infty.
\end{equation}
For $T>0$, we will work in the domain 
\begin{equation}\label{def:omega-un}
\Omega _0(T):=\{(t,x)\in(T,\infty)\times \mathbb{R}: \vert x\vert <c_0t\}.
\end{equation}
It turns out that the construction of the super-solution is highly dependent on the value of $d_c$.

\medskip

\noindent{$\bullet$  \bf The case $d_c\le 1$.} We introduce a solution of the Cauchy problem of heat equation
\begin{equation}
\label{def-s}
\left\{
\begin{aligned}
 \alpha_t&= \alpha_{xx},\\
\alpha(0,x)&=\alpha_0(x):=B_1e^{-q|x|},
\end{aligned}
\right.
\end{equation}
and consider a super-solution $(\overline u,\underline v)$ with positive parameters in form of 
\begin{equation}\label{definition of super sol system}
\left\{
\begin{aligned}
\overline u(t,x)&:=t^{\frac{1-d_c}{2}}(1-e^{-\tau t})\alpha(t,x),\\
\underline v(t,x)&:=V_{1}(x-c_1t)+V_{1}(-x-c_1t)-1-\overline{u}(t,x).
\end{aligned}
\right.
\end{equation}

\medskip

\noindent{$\bullet$ \bf The case $d_c\ge 1$.} We introduce a solution of the Cauchy problem of heat equation
\begin{equation}
\label{def-s*}
\left\{
\begin{aligned}
 \alpha_t &= d_c\alpha_{xx},\\
\alpha(0,x)&=\alpha_0(x):=B_1e^{-q|x|},
\end{aligned}
\right.
\end{equation}
and consider a super-solution $(\overline u,\underline v)$ with positive parameters in form of 
\begin{equation}\label{definition of super sol system d>1}
\left\{
\begin{aligned}
\overline{u}(t,x)&:=t^{\frac{d_c-1}{2d_c}}(1-e^{-\tau t})s(t,x),\\
\underline{v}(t,x)&:=V_{1}(x-c_1t)+V_{1}(-x-c_1t)-1-\overline{u}(t,x).
\end{aligned}
\right.
\end{equation}
Note that, \eqref{def-s}-\eqref{definition of super sol system} and \eqref{def-s*}-\eqref{definition of super sol system d>1} obviously coincide when $d_c=1$.

\begin{proposition}[Super-solutions]\label{prop:inequality super solution} The following holds.
\begin{itemize}
\item[$(i)$] Assume $d_c\le 1$. Let $0<q<\min\{\frac{c_0}{2},\frac{\lambda_1c_1}{c_0}\}$ and $0<\tau<\lambda_1(c_1-c_0)$ be given. Then there exists $T^{*}>0$ such that,  for all $B_1>0$, $(\overline u,\underline v)$ is  a super-solution in the domain $\Omega_0(T^{*})$.

\item[$(ii)$] Assume $d_c\ge 1$.  Let $0<q<\min\{\frac{c_0}{2d_c},\frac{\lambda_1c_1}{c_0}\}$ and $0<\tau<\lambda_1(c_1-c_0)$ be given. Then there exists $T^{*}>0$ such that,  for all $B_1>0$, $(\overline u,\underline v)$,  is  a super-solution in the domain $\Omega_0(T^{*})$.
\end{itemize}
\end{proposition}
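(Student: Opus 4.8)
The plan is to verify directly the two differential inequalities $N_1[\overline u,\underline v]\ge 0$ and $N_2[\overline u,\underline v]\le 0$ on $\Omega_0(T^{*})$ for $T^{*}$ large; the required regularity and boundedness are immediate, since $\alpha$ is smooth and bounded by $B_1$ on $(0,\infty)\times\mathbb{R}$ and $V_1$ is smooth and bounded. The first step is an algebraic reduction. Write $\tilde v(t,x):=V_1(x-c_1t)+V_1(-x-c_1t)-1$, so that $\underline v=\tilde v-\overline u$, and abbreviate $p:=V_1(x-c_1t)$, $r:=V_1(-x-c_1t)$. Then $1-\overline u-\underline v=1-\tilde v=(1-p)+(1-r)$, so both competition terms lose their $\overline u$-dependence. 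Using \eqref{single-V1} to evaluate $\partial_tV_1(x-c_1t)-d_c\partial_{xx}V_1(x-c_1t)=V_1(1-V_1)$ (and likewise for the reflected wave), a one-line computation gives the identity $\partial_t\tilde v-d_c\partial_{xx}\tilde v-\tilde v(1-\tilde v)=2(1-p)(1-r)$, and hence
$$N_1[\overline u,\underline v]=\big(\partial_t\overline u-\partial_{xx}\overline u\big)-\overline u\big[(1-p)+(1-r)\big],\qquad N_2[\overline u,\underline v]=2(1-p)(1-r)+\overline u\big[(1-p)+(1-r)\big]-\big(\partial_t\overline u-d_c\partial_{xx}\overline u\big).$$
On $\Omega_0(T)$ one has $|x|\le c_0t$, so both arguments $x-c_1t$ and $-x-c_1t$ are $\le(c_0-c_1)t\to-\infty$; by \eqref{estimate of V at infinity-un} this yields $(1-p)+(1-r)\le 4M_1e^{-\lambda_1(c_1-c_0)t}$ for $t$ large, and since the two arguments sum to $-2c_1t$, the sharper bound $(1-p)(1-r)\le 4M_1^2e^{-2\lambda_1c_1t}$.

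The second step is to estimate $\partial_t\overline u\mp d_c\partial_{xx}\overline u$ using that $\alpha$ solves a heat equation. Writing $\overline u=g(t)\alpha(t,x)$, the logarithmic derivative $g'/g$ equals $\tfrac{1-d_c}{2t}+\tfrac{\tau e^{-\tau t}}{1-e^{-\tau t}}$ when $d_c\le1$ (with $\alpha_t=\alpha_{xx}$) and $\tfrac{d_c-1}{2d_ct}+\tfrac{\tau e^{-\tau t}}{1-e^{-\tau t}}$ when $d_c\ge1$ (with $\alpha_t=d_c\alpha_{xx}$); in either case the only ``diffusive mismatch'' left after cancelling the Laplacian terms is controlled by the single inequality
$$\frac{\alpha}{2t}+\alpha_t\ \ge\ 0\quad\text{on }(0,\infty)\times\mathbb{R},$$
equivalently $\partial_t\!\big(\sqrt{t}\,\alpha(t,x)\big)\ge0$, which is immediate from the representation $\sqrt t\,\alpha(t,x)=(4\pi d_c)^{-1/2}\int e^{-(x-y)^2/(4d_ct)}\alpha_0(y)\,dy$ by differentiating under the integral, since $\partial_t e^{-(x-y)^2/(4d_ct)}\ge0$ and $\alpha_0\ge0$. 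Feeding this in gives, in both cases, the clean lower bounds $\partial_t\overline u-\partial_{xx}\overline u\ge \tfrac{\tau e^{-\tau t}}{1-e^{-\tau t}}\overline u$ and $\partial_t\overline u-d_c\partial_{xx}\overline u\ge\tfrac{\tau e^{-\tau t}}{1-e^{-\tau t}}\overline u$, the remaining polynomial-order contributions being nonnegative (for $d_c\le1$ this is used in $N_2$, for $d_c\ge1$ in $N_1$).

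Combining the two steps, $N_1[\overline u,\underline v]\ge0$ reduces to $\tfrac{\tau e^{-\tau t}}{1-e^{-\tau t}}\ge(1-p)+(1-r)$, true for $t\ge T^{*}$ because $\tau<\lambda_1(c_1-c_0)$ makes $e^{-\tau t}$ decay strictly more slowly than $e^{-\lambda_1(c_1-c_0)t}$. For $N_2[\overline u,\underline v]\le0$ it suffices that $\tfrac{\tau e^{-\tau t}}{1-e^{-\tau t}}\,\overline u\ge 2(1-p)(1-r)+\overline u\,[(1-p)+(1-r)]$; the second term on the right is again absorbed via $\tau<\lambda_1(c_1-c_0)$, but the term $2(1-p)(1-r)\le 8M_1^2e^{-2\lambda_1c_1t}$ forces a pointwise lower bound $\overline u(t,x)\gtrsim e^{-q(c_0-d_cq)t}$ on $\Omega_0(T)$. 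This is the heart of the proof: since $\alpha(t,\cdot)$ stays symmetric and nonincreasing in $|x|$ (the heat flow preserves symmetric-decreasing profiles), its minimum over $|x|\le c_0t$ is attained at $|x|=c_0t$, and completing the square in the heat representation shifts the Gaussian center to $2d_cqt$, which stays to the left of $c_0t$ exactly because $q<\tfrac{c_0}{2d_c}$ (resp. $q<\tfrac{c_0}{2}$ when $d_c\le1$); this yields $\alpha(t,c_0t)\gtrsim e^{-q(c_0-d_cq)t}$. Finally $q<\lambda_1c_1/c_0$ gives $q(c_0-d_cq)<qc_0<\lambda_1c_1$, which together with $\tau<\lambda_1(c_1-c_0)<\lambda_1c_1$ forces $q(c_0-d_cq)+\tau<2\lambda_1c_1$, so the required inequality holds once $t\ge T^{*}$.

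I expect the main obstacle to be precisely this last chain: one must control $\overline u$ from both sides sharply enough — the interior monotonicity $\partial_t(\sqrt t\,\alpha)\ge0$ together with the Gaussian lower bound near $\partial\Omega_0(T)$ — to defeat the doubly-exponentially small term $2(1-p)(1-r)\sim e^{-2\lambda_1c_1t}$, and it is here that all the quantitative hypotheses ($q<c_0/(2d_c)$ resp. $c_0/2$, $q<\lambda_1c_1/c_0$, $0<\tau<\lambda_1(c_1-c_0)$) get used. Once these estimates are in hand the rest is routine bookkeeping; the case $d_c\ge1$ runs along the same lines, the only difference being that the diffusive-mismatch term surfaces in $N_1$ rather than $N_2$, where it is again absorbed by the inequality $\alpha_t\ge-\alpha/(2t)$.
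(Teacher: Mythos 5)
Your proof is correct and follows essentially the same route as the paper's: the same algebraic reduction via the identity $\partial_t\tilde v-d_c\partial_{xx}\tilde v-\tilde v(1-\tilde v)=2(1-V_1(\xi_+))(1-V_1(\xi_-))$, the same heat-kernel monotonicity $\alpha_t\ge-\alpha/(2t)$ to absorb the diffusive mismatch, and the same Gaussian lower bound $\alpha(t,c_0t)\gtrsim e^{-q(c_0-d_cq)t}$ (which the paper extracts from the explicit error-function formula rather than from symmetric-decreasingness) to dominate the doubly small product term. The only differences are cosmetic: you use the slightly sharper bound $(1-V_1(\xi_+))(1-V_1(\xi_-))\lesssim e^{-2\lambda_1c_1t}$ where the paper uses $e^{-\lambda_1(2c_1-c_0)t}$, and you treat the cases $d_c\le1$ and $d_c\ge1$ in a unified way.
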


\begin{proof}{\it of Proposition \ref{prop:inequality super solution}.} Since the super-solutions  are even functions, it is enough to check the super-solution inequality for $t\geq T$  and $0\leq x< c_0t$.  For ease of notations, we denote  $\xi_{\pm}:=\pm x-c_1t$. Since $\xi_-\leq -c_1t$ and $\xi_+\leq -(c_1-c_0)t$, it follows  from $V_1'<0$ and \eqref{estimate of V at infinity-un}  that there exist $C_->0$ and $C_+>0$ such that, for $T>0$ large enough,
\begin{equation}\label{estimate of 1-V super sol}
1-V_1(\xi_-)\leq C_-e^{-\lambda_1c_1t}\quad \text{ and }\quad  1-V_1(\xi_+)\leq C_+e^{-\lambda_1(c_1-c_0)t},\quad\text{for all}\ (t,x)\in \Omega_0^+(T),
\end{equation}
where 
$$
\Omega_0^+(T):=\Omega_0(T)\cap (T,\infty)\times [0,\infty).
$$
Moreover, up to enlarging $T>0$ if  necessary, there exists $0<\rho<\frac 1 3$ such that 
\begin{equation}
\label{rho-bis}
0<1-V_1(\xi_\pm)\leq \rho, \quad  \text{for all}\ (t,x)\in \Omega_0^+(T).
\end{equation}

\medskip

We first assume $d_c\le 1$.  By some straightforward computations and \eqref{def-s}, one has
\begin{equation*}\label{super sol N1}
N_1[\overline{u},\underline{v}]=\overline{u}\left(\frac{1-d_c}{2}t^{-1}+\frac{\tau e^{-\tau t}}{1-e^{-\tau t}}-2+V_1(\xi_+)+V_1(\xi_-)\right).
\end{equation*}
In view of \eqref{estimate of 1-V super sol}, by choosing $\tau<\lambda_1(c_1-c_0)$, we deduce that, for $T>0$ large enough, $N_1[\overline{u},\underline{v}]\geq 0$ in $\Omega_0^{+}(T)$. 

On the other hand, some straightforward computations yield
$$
N_2[\overline{u},\underline{v}]=J_1+J_2+J_3,
$$
where 
\begin{eqnarray*}
J_1&:=&-\overline{u}\left(\frac{\tau e^{-\tau t}}{1-e^{-\tau t}}+\frac{1-d_c}{2}t^{-1}+(1-d_c)\frac{\alpha_t}{\alpha}\right),\\
J_2&:=&2(1-V_1(\xi_-))(1-V_1(\xi_+)),\\
J_3&:=&\overline{u}(2-V_1(\xi_+)-V_1(\xi_-)).
\end{eqnarray*}
From the \lq\lq Heat kernel expression'' of $s(t,x)$, namely
$$
\alpha(t,x)=\left(G(t,\cdot)*\alpha_0\right)(x),\quad\text{where}\ G(t,x):=\frac{1}{\sqrt{4\pi t}}e^{-\frac{x^2}{4t}},
$$
we can check that $\alpha_t(t,x)\geq -\frac{1}{2t}\alpha(t,x)$. As a result, since $d_c\leq 1$, we have
\begin{equation*}
J_1\le -\overline{u}\frac{\tau e^{-\tau t}}{1-e^{-\tau t}}.
\end{equation*}

In view of (\ref{estimate of 1-V super sol}) and $\tau<\lambda_1(c_1-c_0)$, we have  $|J_3|=o(|J_1|)$ as $t\to\infty$.
Recalling $\alpha_0(x)=B_1e^{-q\vert x\vert}$, we have
$$
\alpha(t,x)=\frac{B_1}{\sqrt{4\pi t}}\left(\int _{-\infty}^0 e^{-\frac{(x-y)^2}{4t}}e^{qy}dy+\int_0^{+\infty}e^{-\frac{(x-y)^2}{4t}}e^{-qy}dy\right),
$$
which can be recast, after some elementary computations, 
\begin{equation}\label{expression of s}
\alpha(t,x)=\frac{B_1}{\sqrt \pi}\left(e^{q^2t-qx}\int_{\frac{2qt-x}{2\sqrt{t}}}^{+\infty}e^{-w^2}dw+e^{q^2t+qx}\int_{\frac{2qt+x}{2\sqrt{t}}}^{+\infty}e^{-w^2}dw\right).
\end{equation}
In particular, by setting  $2q<c_0$, we have
$$
\overline u(t,c_1t)\geq  \frac 12 \alpha(t,c_1t)\geq \frac{B_1}{4}e^{-(qc_0-q^{2})t}.
$$
Then, by setting $q$ small such that $qc_0-q^{2}<\lambda_1c_1$, we have $|J_2|=o(|J_1|)$ as $t\to\infty$.
As a result, $N_2[\overline{u},\underline{v}]\leq 0$ in $\Omega_0^{+}(T)$ up to enlarging $T$ if necessary.

\medskip
Next, we assume $d_c\ge 1$.
Some straightforward computations implies
\begin{equation*}\label{super sol N1 d>1}
N_1[\overline{u},\underline{v}]=\overline u\left(\frac{d_c-1}{2d_c}t^{-1}+\frac{d_c-1}{d_c}\frac{\alpha_t}{\alpha}+\frac{\tau e^{-\tau t}}{1-e^{-\tau t}}-2+V_1(\xi_+)+V_1(\xi_-)\right).
\end{equation*}
As above, since $d_c\ge 1$, $\alpha_t(t,x)\geq -\frac{1}{2t}\alpha(t,x)$ implies 
$$N_1[\overline{u},\underline{v}]\ge \overline{u}\left(\frac{\tau e^{-\tau t}}{1-e^{-\tau t}}-2+V_1(\xi_+)+V_1(\xi_-)\right).$$
In view of \eqref{estimate of 1-V super sol} and $\tau<\lambda_1(c_1-c_0)$,  we deduce that, for $T>0$ large enough, $N_1[\overline{u},\underline{v}]\geq 0$ in $\Omega_1^{+}(T)$. 

On the other hand, By some straightforward computations, one has
$$
N_2[\overline{u},\underline{v}]=J_1+J_2,
$$
where 
\begin{eqnarray*}
J_1&:=&\overline{u}\left(2-V_1(\xi_+)-V_1(\xi_-)-\frac{\tau e^{-\tau t}}{1-e^{-\tau t}}-\frac{d_c-1}{2d_c}t^{-1}\right),\\
J_2&:=&2(1-V_1(\xi_-))(1-V_1(\xi_+)).
\end{eqnarray*}
By applying the same argument as that for $d_c\le 1$, we get $N_2[\overline{u},\underline{v}]\leq 0$ in $\Omega_0^{+}(T)$.
\end{proof}
 

\subsubsection{Proof of Theorem \ref{th:profile c_c>c_f} for the high conversion rate case}
\begin{proposition}\label{prop: boundary condition super sol} 
Let $(F,C,H)$ be the solution of \eqref{fch-equation} with initial data satisfying \eqref{initial data}.
Let $(\overline{u},\underline{v})$ be given by \eqref{def-s}-\eqref{definition of super sol system} if $d_c\leq 1$, and by \eqref{def-s*}-\eqref{definition of super sol system d>1} if $d_c\geq 1$. Then there exist $0<q<\min\{\frac{c_0}{2d_c},\frac{c_0}{2},\frac{\lambda_1c_1}{c_0}\}$, $T^{**}>0$ and $B_1>0$  such that
$$
F(t,x)\leq \overline u(t,x)\quad \text{ and }\quad  \underline v(t,x)\le C(t,x), \quad \text{ for all }\; t\geq T^{**}, \vert x\vert \leq c_0t.
$$
\end{proposition}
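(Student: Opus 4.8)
The plan is to obtain both inequalities from the competitive comparison principle of Proposition~\ref{prop: cp}, applied on the space--time cone $\Omega_0(T^{**})$ of \eqref{def:omega-un}, with $(F,C)$ playing the role of a sub-solution and $(\overline u,\underline v)$ of Proposition~\ref{prop:inequality super solution} the role of a super-solution. That $(F,C)$ is a sub-solution of \eqref{critical competition system} in the competitive sense is immediate from \eqref{fch-equation}: the function $F$ is an exact solution of the first equation of \eqref{critical competition system} once one sets $v=C$ there, so $N_1[F,C]\le 0$; and the $C$-equation of \eqref{fch-equation} coincides with the second equation of \eqref{critical competition system} up to the nonnegative source $sH(F+C)$, so $N_2[F,C]\ge 0$. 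Since Proposition~\ref{prop:inequality super solution} already gives the super-solution inequalities on $\Omega_0(T^{*})$ for \emph{every} $B_1>0$ and every $T^{**}\ge T^{*}$, the whole task reduces to arranging the ordering $F\le\overline u$ and $\underline v\le C$ on the two portions of the parabolic boundary of $\Omega_0(T^{**})$: the bottom $\{t=T^{**},\ |x|\le c_0T^{**}\}$ and the lateral part $\{\,|x|=c_0t,\ t\ge T^{**}\,\}$.

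The bottom is handled by taking $B_1$ large: on the compact interval $|x|\le c_0T^{**}$ the heat-semigroup image $\alpha(T^{**},\cdot)$ is bounded below by a positive constant times $B_1$, so for $B_1$ large one has $\overline u(T^{**},\cdot)\ge 1$ there; since $F\le 1$ by \eqref{well known sup slo} and $V_1(x-c_1T^{**})+V_1(-x-c_1T^{**})-1\le 1$, this gives at once $F(T^{**},\cdot)\le\overline u(T^{**},\cdot)$ and $\underline v(T^{**},\cdot)\le 0\le C(T^{**},\cdot)$. Enlarging $B_1$ is harmless because Proposition~\ref{prop:inequality super solution} holds for all $B_1>0$.

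The lateral boundary is the delicate point, and it dictates how small $q$ must be. From \eqref{expression of s} and the constraint $2q<c_0$ one gets, for $t$ large, $\overline u(t,\pm c_0t)\ge c'\,t^{(1-d_c)/2}e^{-(qc_0-q^{2})t}$ with $c'>0$ (and its $d_c\ge 1$ analogue with $t^{(d_c-1)/(2d_c)}$), the rate $qc_0-q^{2}$ being positive and tending to $0$ as $q\to0$. For the $F$-component, the classical super-solution \eqref{well known sup slo} gives $F(t,\pm c_0t)\le A_1e^{-\sqrt a\,(c_0-c_f)t}$, so $F\le\overline u$ on this boundary for $t$ large provided $qc_0-q^{2}<\sqrt a\,(c_0-c_f)$, a positive quantity since $c_0>c_f$. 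For the $C$-component, since $c_1>c_c>c_0$ both arguments of $V_1$ in $\underline v$ tend to $-\infty$ on $|x|=c_0t$, hence $V_1(\cdot)\le 1$ there and $\underline v(t,\pm c_0t)\le 1-\overline u(t,\pm c_0t)$; on the other hand $c_f<c_0<c_c$, so Proposition~\ref{prop: lower estimate on C a<1+s} gives $C(t,\pm c_0t)\ge 1-e^{-\gamma_ct}$ for $t$ large, and consequently $\underline v\le C$ on this boundary once $qc_0-q^{2}<\gamma_c$. Choosing $q$ below the bound of the statement and small enough to satisfy these two inequalities together with the hypotheses of Proposition~\ref{prop:inequality super solution}, then $T^{**}\ge T^{*}$ large enough for all these asymptotics to take effect, and finally $B_1$ large, the competitive comparison principle applied on $\Omega_0(T^{**})$ yields $F\le\overline u$ and $\underline v\le C$ for $t\ge T^{**}$, $|x|\le c_0t$, as claimed.

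The only genuine obstacle I foresee is this last step, and more precisely the inequality $\underline v\le C$ on the moving boundary $|x|=c_0t$: it forces the Gaussian decay rate $q$ to be taken small relative to the a priori uncontrolled rate $\gamma_c$ coming out of Proposition~\ref{prop: lower estimate on C a<1+s}, so one must be careful about the order in which $c_0,c_1,\lambda_1,q,\tau,T^{*},T^{**},B_1$ are fixed. A secondary technical point is that Proposition~\ref{prop: cp} is phrased on space--time rectangles; applying it on the cone $\Omega_0(T^{**})$ is routine (exhaust the cone by rectangles, or run the standard maximum-principle argument on a moving domain), and its boundedness and regularity hypotheses hold since $\overline u$ is continuous and decays to $0$ while $\underline v$ is continuous and bounded.
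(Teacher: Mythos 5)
Your proposal is correct and follows essentially the same route as the paper: the same choice of $q$ subject to $qc_0-q^2<\min\{\sqrt{a}(c_0-c_f),\gamma_c\}$, the same use of \eqref{well known sup slo} and \eqref{C>1-e^-t} on the lateral boundary $|x|=c_0t$, the Gaussian lower bound from \eqref{expression of s} for $\overline u$, a large $B_1$ on the bottom $t=T^{**}$, and the observation that $(F,C)$ is a competitive sub-solution of \eqref{critical competition system} because $sH(F+C)\ge 0$. No gaps; the order in which you fix $c_0,q,T^{**},B_1$ matches the paper's.
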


\begin{proof} {\it of Proposition \ref{prop: boundary condition super sol}.}
We first fix $0<q<\min\{\frac{c_0}{2d_c},\frac{c_0}{2},\frac{\lambda_1c_1}{c_0}\}$ small enough so that
\begin{equation}
\label{choice-q}
\max\{qc_0-q^2, qc_0-d_cq^2\}<\min\{\sqrt{a}(c_0-c_f),\gamma_c\}.
\end{equation}
From Proposition \ref{prop:inequality super solution}, for any $t\geq T^{*}$, we have a super-solution $(\overline{u},\underline{v})$ for which $B_1>0$ is arbitrary. We only deal with the case $d_c\leq 1$, and the case $d_c\geq 1$ can be proved by the similar argument.

We first focus on $x =c_0t$ with $c_0<c_1$ and $t\geq T^*$ (the case $x=-c_0t$, $t\geq T^*$ being similar). 
\begin{equation}\label{cl:u< U on ct}
F(t,c_0t)\le \overline{u}(t,c_0t),\quad \text{for all}\ t\geq T^*,
\end{equation}
follows from \eqref{choice-q} and the upper estimate on $F$ obtained in section 2, and
\begin{equation}\label{cl:u< U v>V on ct}
\underline v(t,c_0t)\leq C(t,c_0t), \quad \text{for all}\ t\geq T^*,
\end{equation}
follows from \eqref{choice-q} and the estimate \eqref{C>1-e^-t} in Proposition \ref{prop: lower estimate on C a<1+s}.

Now, $T^*>0$ are fixed from the above discussion. We focus on the initial order, namely $t=T^{*}$, $\vert x\vert \leq c_0 T^{*}$. As above, we deduce from (\ref{expression of s}) that
$$
 \inf_{|x|\le c_0T^*}\overline{u}(T^*,x)\ge \frac 12  \alpha(T^*,c_0T^*)\geq \frac{B_1}{4}e^{-(qc_0-q^{2})T^*}\geq 1\geq \sup_{t>0, x\in \mathbb{R}} F(t,x),
 $$ 
 provided that $B_1\geq 4e^{(qc_0-q^2)T^*}$. On the other hand, one has
$$
\sup_{|x|\le c_0T^*}\underline{v}(T^*,x)\leq  1- \inf_{|x|\le c_0T^*}\overline{u}(T^*,x)\leq 1-\frac{B_1}{4}e^{-(qc_0-q^{2})T^*}\leq 0 \leq \inf _{t>0, x\in \mathbb{R}} F(t,x),
$$
for some large $B_1$ chosen by the same way.

It is easy to check that $(F,C)$ satisfying the equation \eqref{fch-equation} is the sub-solution of \eqref{critical competition system}. 
Then, by applying the comparison principle on $\Omega_1(T^{*})$, we conclude the proof of  Proposition \ref{prop: boundary condition super sol}.
\end{proof}

Combing with the upper estimate of $F$ in section 2 and Proposition \ref{prop: boundary condition super sol}, we find $F$ converges to $0$ uniformly for all $x\in\mathbb{R}$, and $C$ is uniformly greater than or equal to $1$ in the final zone.
\begin{proposition}\label{pr:a not= 1+s F}
Assume $d_c(1+s)>a$. Let $(F,C,H)$ be the solution of \eqref{fch-equation} with initial data satisfying \eqref{initial data}. Then one has
\begin{equation}\label{unif 0 F}
\limsup_{t\rightarrow\infty}\sup_{x\in{\mathbb R}}F(t,x)=0,
\end{equation}
\begin{equation}\label{C>=1 a<1+s}
\liminf_{t\rightarrow\infty}\inf_{|x|\le c_0t}C(t,x)\ge 1,\quad\text{for any}\quad 0<c_0<c_c.
\end{equation}
\end{proposition}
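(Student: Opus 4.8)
The plan is to assemble Proposition \ref{pr:a not= 1+s F} directly from the pieces already established, without any new construction. The statement \eqref{unif 0 F} asserts that $F\to 0$ uniformly on all of $\mathbb{R}$, and \eqref{C>=1 a<1+s} asserts $C$ is asymptotically $\ge 1$ on $|x|\le c_0 t$ for every $0<c_0<c_c$; both should follow by combining the leading-edge estimates of Section 2 with the final-zone estimates coming from Proposition \ref{prop: boundary condition super sol}. First I would split the spatial domain at the line $|x|=c_0 t$. On the outer region $|x|\ge c_0 t$ (choosing, say, $c_0$ just below $c_c$ so that $c_0>c^*$ when we need it, or more carefully handling the range $c^*$ through $c_c$), the super-solution \eqref{well known sup slo} and the upper estimate \eqref{eq of upper estimate of F} give $F\to 0$ uniformly; to cover the intermediate band $c^* \le |x| \le c_0 t$ one uses instead that $c_0<c_c$ forces nothing by itself, so the relevant statement there is really the content of Proposition~\ref{prop: lower estimate on C a<1+s} combined with \eqref{well known sup slo} again, since \eqref{well known sup slo} decays for $|x|>c_f t$ with $c_f<c_c$. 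So in fact $\bar F$ from \eqref{well known sup slo} already handles all of $|x|\ge c_1 t$ for any $c_1>c_f$, and since $c_f<c_c$ we may pick such a $c_1<c_0$.

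On the inner region $|x|\le c_0 t$ (with $\max\{c_f,2\sqrt{d_c}\}<c_0<c_c$ as fixed in the subsection), Proposition \ref{prop: boundary condition super sol} gives $F(t,x)\le \overline u(t,x)$ for $t\ge T^{**}$. The key point is that $\overline u(t,x)\to 0$ uniformly in $x$: in the case $d_c\le 1$ we have $\overline u(t,x)=t^{(1-d_c)/2}(1-e^{-\tau t})\alpha(t,x)$ with $\alpha$ solving the heat equation with $L^1\cap L^\infty$ initial data, so $\|\alpha(t,\cdot)\|_{L^\infty}\le C t^{-1/2}$ and hence $\|\overline u(t,\cdot)\|_{L^\infty}\le C t^{-d_c/2}\to 0$; in the case $d_c\ge 1$, $\overline u(t,x)=t^{(d_c-1)/(2d_c)}(1-e^{-\tau t})\alpha(t,x)$ with $\alpha$ solving $\alpha_t=d_c\alpha_{xx}$, giving $\|\alpha(t,\cdot)\|_{L^\infty}\le C t^{-1/2}$ and $\|\overline u(t,\cdot)\|_{L^\infty}\le C t^{-1/(2d_c)}\to 0$. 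Combining the two regions yields \eqref{unif 0 F}.

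For \eqref{C>=1 a<1+s}, I would combine Proposition \ref{prop: boundary condition super sol} and Proposition \ref{prop: lower estimate on C a<1+s}. Proposition \ref{prop: boundary condition super sol} gives $C(t,x)\ge \underline v(t,x)=V_1(x-c_1t)+V_1(-x-c_1t)-1-\overline u(t,x)$ on $|x|\le c_0t$, $t\ge T^{**}$. For $|x|\le c_0 t$ with $c_0<c_1$, both shifted fronts satisfy $V_1(\pm x-c_1 t)\to 1$ uniformly (indeed $1-V_1(\pm x-c_1t)\le C e^{-\lambda_1(c_1-c_0)t}$ by \eqref{estimate of 1-V super sol}), and $\overline u(t,x)\to 0$ uniformly by the bound above; hence $\liminf_{t\to\infty}\inf_{|x|\le c_0t}C(t,x)\ge 1$. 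The only subtlety is that the subsection fixed $c_0$ in the restricted range $\max\{c_f,2\sqrt{d_c}\}<c_0<c_c$, whereas \eqref{C>=1 a<1+s} is claimed for every $0<c_0<c_c$; this is handled by monotonicity of the infimum in $c_0$ (a smaller $c_0$ gives a larger region's complement, i.e. $\inf_{|x|\le c_0 t}$ increases), so the estimate for $c_0$ near $c_c$ implies it for all smaller $c_0$, and for $c_0$ already below $\max\{c_f,2\sqrt{d_c}\}$ one simply applies the result with a larger admissible $c_0'$ and notes $\{|x|\le c_0 t\}\subset\{|x|\le c_0' t\}$. I do not anticipate a genuine obstacle here; the one place requiring care is making the uniform-in-$x$ decay of $\overline u$ fully explicit from the heat-kernel representation \eqref{expression of s}, i.e. checking that the polynomial prefactor $t^{(1-d_c)/2}$ (resp.\ $t^{(d_c-1)/(2d_c)}$) is strictly dominated by the $t^{-1/2}$ decay of $\|\alpha(t,\cdot)\|_\infty$, which holds precisely because $d_c>0$.
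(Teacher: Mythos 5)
Your proposal is correct and follows essentially the same route as the paper, which likewise obtains the proposition immediately by combining the leading-edge super-solution \eqref{well known sup slo} on $|x|\ge c_1t$ ($c_f<c_1<c_0$) with Proposition \ref{prop: boundary condition super sol} on $|x|\le c_0t$, the only detail to supply being the uniform decay $\Vert\overline u(t,\cdot)\Vert_\infty\le Ct^{-k^*}\to 0$ from the $L^1$--$L^\infty$ heat-kernel bound, which you do correctly. The brief detour in your first paragraph about choosing ``$c_0>c^*$'' is vacuous here (since $c^*=c_c$ when $d_c(1+s)>a$), but you immediately self-correct to the right covering argument, so there is no gap.
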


\begin{remark}
We would like to remark that  Proposition \ref{pr:a not= 1+s F} holds for both high conversion rate case $g\ge 1$
and low conversion rate case $g<1$. 
\end{remark}

Now, we are ready to prove the first statement of Theorem \ref{th:profile c_c>c_f}. 

\begin{proof}{\it of (1) of Theorem \ref{th:profile c_c>c_f}.} 
Assume $g\ge 1$. By working on $|x|\le c_0t$ for any $0<c_0<c_c$, from \eqref{C>=1 a<1+s} and the same limit argument in the proof of Claim \ref{cl:aa}, we obtain 
\begin{equation}\label{H=1 a<1+s g>1}
\limsup_{t\rightarrow\infty}\sup_{|x|\le c_0t}H(t,x)= 0.
\end{equation}
Then, by considering the limit system \eqref{limit system C H} with $H_{\infty}(t,x)\equiv 0$, one has 
\begin{equation}\label{C<1 a<1+s g>1}
\limsup_{t\rightarrow\infty}\sup_{|x|\le c_0t}C(t,x)\le 1.
\end{equation}
As a result, \eqref{asympt C H g>1 a<1+s} follows from \eqref{C>=1 a<1+s}, \eqref{H=1 a<1+s g>1}, and \eqref{C<1 a<1+s g>1}.
\end{proof}

\subsection{The bump phenomena in the high conversion case}
In this subsection, we will provide a lower estimate on $F$ and an upper estimate on $C$ for the case $g>1$, which explain the reason why bumps on the profiles of $F$ and $C$ are observed by the numerical simulation. A short discussion on the critical case $g=1$ will be given in Remark \ref{rm: g=1}.

The estimate \eqref{H=1 a<1+s g>1} implies that $H$ decreases to $0$ uniformly in the final zone. However, to get sharp estimates on $F$ and $C$, we need first show that $H$ decreases to $0$ with an exponential decay rate 
\begin{proposition}\label{prop: H go to 0 in finial zone}
Assume $d_c(1+s)>a$ and $g>1$. Let $(F,C,H)$ be the solution of \eqref{fch-equation} with initial data satisfying \eqref{initial data}. Then, for any $0<c_0<c_c$, there exist  $\tau_h>0$, $\tau_c>0$ and some large $T>0$ such that
\begin{equation}\label{H to 0 and C to 1 exponentially}
\sup_{|x|\le c_0t} H(t,x)\le e^{-\tau_h t}\ \text{and}\ \sup_{|x|\le c_0t} C(t,x)\le 1+e^{-\tau_c t}, \quad\text{for all}\quad t\ge T.
\end{equation}
\end{proposition}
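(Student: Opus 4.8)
The plan rests on Proposition~\ref{pr:a not= 1+s F}, which already gives $F\to 0$ uniformly and, for every $\hat c_0<c_c$ and every $\epsilon>0$, the bound $C(t,x)\ge 1-\epsilon$ on $\{|x|\le\hat c_0 t\}$ for all large $t$; the remaining task is to turn the qualitative decay $H\to 0$ (Theorem~\ref{th:profile c_c>c_f}(1)) into an exponential rate and then to exploit that in the $C$-equation. Fix $c_0<c_1<\hat c_0<c_c$. Since $g>1$, choose $\epsilon\in(0,1-1/g)$ so that $\mu:=g(1-\epsilon)-1>0$; then on $\{|x|\le\hat c_0 t\}$, using $F,H\ge 0$ and $C\ge 1-\epsilon$, we get $1-H-g(F+C)\le 1-gC\le-\mu$, so for large $t$ the $H$-equation is dominated on this cone by the dissipative scalar equation $\partial_t H\le d_h\partial_{xx}H-\mu b\,H$. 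The only information available at the moving boundary $|x|=\hat c_0 t$ is the trivial bound $H\le 1$, so the super-solution must absorb it near the boundary.

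\emph{Step 1 (exponential decay of $H$).} I use the smooth super-solution $\overline H(t,x):=K_1 e^{-\tau_h t}\cosh(\lambda_H x)$ with $\lambda_H>0$ small and $\tau_h:=\tfrac12\lambda_H(c_1+\hat c_0)$. Because $\cosh''=\lambda_H^2\cosh$, the interior super-solution inequality reduces, after using $1-\overline H-g(F+C)\le-\mu$, to $\mu b\ge\tau_h+d_h\lambda_H^2$, which holds once $\lambda_H$ is small. For $K_1$ large and $t$ large one has $\overline H\ge 1\ge H$ on the initial time-slice and on $|x|=\hat c_0 t$ (there $\cosh(\lambda_H\hat c_0 t)\ge\tfrac12 e^{\lambda_H\hat c_0 t}$ beats $e^{-\tau_h t}$ since $\hat c_0>c_1$). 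The comparison principle then gives $H\le\overline H$ on $\{|x|\le\hat c_0 t\}$, and restricting to $|x|\le c_1 t$ yields $H(t,x)\le K_1 e^{-\nu t}$ with $\nu:=\tfrac12\lambda_H(\hat c_0-c_1)>0$; absorbing $K_1$ into a slightly smaller exponent gives the asserted bound on $H$, even on the larger set $|x|\le c_1 t$.

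\emph{Step 2 (the bound $C\le 1+e^{-\tau_c t}$).} On $\{|x|\le c_1 t\}$ we now know $sH\le sK_1 e^{-\nu t}\to 0$, while $F\ge 0$. Write $\overline C:=1+\phi$, $\phi(t,x):=K_2 e^{-\tau_c t}\cosh(\lambda_C x)$, with $\lambda_C>0$ small and $\tau_c:=\tfrac12\lambda_C(c_0+c_1)$ chosen so small that $\tau_c+d_c\lambda_C^2<\tfrac14$ and $\tau_c<\nu$. Substituting $\overline C$ into the $C$-equation, its defect equals $\partial_t\phi-d_c\partial_{xx}\phi+\phi(1+F+\phi-sH)+F(1-sH)-sH$. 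The key structural point is that the only destabilizing forcing is $-sH$, which is exponentially small by Step~1, whereas the $F$-terms carry the favorable sign: since $F,\phi\ge 0$ and $sH\le\tfrac12$ for large $t$, the middle term is $\ge\tfrac12\phi$ and $F(1-sH)\ge 0$. Hence the defect is $\ge(\tfrac12-\tau_c-d_c\lambda_C^2)\phi-sH\ge\tfrac14 K_2 e^{-\tau_c t}-sK_1 e^{-\nu t}\ge 0$ for $t$ large (as $\tau_c<\nu$). Using the global a priori bound $C\le\widehat M$, one checks $\overline C\ge C$ on the initial slice (for $K_2$ large) and on $|x|=c_1 t$ (there $\overline C\ge 1+\tfrac{K_2}{2}e^{\lambda_C(c_1-c_0)t/2}\to\infty$ since $c_1>c_0$). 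The scalar parabolic comparison principle (the $C$-nonlinearity being polynomial, hence locally Lipschitz in $C$) gives $C\le\overline C$ on $\{|x|\le c_1 t\}$, and restricting to $|x|\le c_0 t$ gives $C(t,x)\le 1+K_2 e^{-\lambda_C(c_1-c_0)t/2}$; absorbing $K_2$ finishes the proof.

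The substance here is not any single estimate — every interior differential inequality is linear, the dissipativity being furnished by $g>1$ in Step~1 and by the exponential stability of the state $C\equiv 1$ in Step~2, and, notably, only $F\ge 0$ is used, never a decay rate for $F$. The one delicate point is the moving lateral boundary: since we only control $H$ and $C$ there by the crude bounds $H\le 1$ and $C\le\widehat M$, the super-solutions are forced to grow toward $|x|=\hat c_0 t$ (resp. $|x|=c_1 t$), and converting this into genuine exponential decay on the strictly interior cone $|x|\le c_0 t$ is exactly what the strict inequalities $c_0<c_1<\hat c_0$ buy; what remains is the consistent choice of the cone speeds together with $(\lambda_H,\tau_h)$ and $(\lambda_C,\tau_c)$ made above.
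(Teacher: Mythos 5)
Your proof is correct and follows essentially the same route as the paper: both use the already-established facts on the cone ($F\to 0$ uniformly, $C\ge 1-\varepsilon$) together with $g>1$ to obtain a linearly dissipative differential inequality for $H$, and then feed the resulting exponential decay of $H$ as a forcing term into the $C$-equation. The only difference is that where the paper delegates the passage from a dissipative inequality on an expanding cone (with merely bounded lateral data) to exponential decay on a slightly smaller cone to Lemma 2.6 of \cite{PWZ}, you carry out that step explicitly with $\cosh$-type super-solutions and nested cone speeds $c_0<c_1<\hat c_0$.
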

\begin{proof}{\it of Proposition \ref{prop: H go to 0 in finial zone}.}
By \eqref{F unif to 0 for all x} and \eqref{asympt C H g>1 a<1+s}, we can assert that, there exist $\varepsilon>0$ and $T_0>0$ such that
for $t\ge T_0$, one has 
\begin{equation}\label{aaa}
\sup_{|x|\le c_0t,} H(t,x)\le \varepsilon\quad\text{and}\quad \sup_{|x|\le c_0t} |1-F(t,x)-C(t,x)|\le \varepsilon.
\end{equation}
From the $H$-equation in \eqref{fch-equation} and \eqref{aaa}, $H(t.x)$ satisfies
\begin{equation}\label{tilde H inequality}
\partial_t H\le d_h\partial_{xx} H-b H(g-1)(1-\varepsilon).
\end{equation}
Then, by applying the similar approach as that in the proof of Lemma 2.6 in \cite{PWZ}, we get
\begin{equation}\label{H to 0 exponentially}
\sup_{|x|\le c_0t} H(t,x)\le e^{-\tau_h t}, \quad\text{for all}\quad t\ge T_1.
\end{equation}

On the other hand, from \eqref{unif 0 F} and \eqref{aaa}, for $t\ge T_1$, $\tilde C:=C-1$ satisfies 
$$\sup_{|x|\le c_0t}|\tilde C(t,x)|\le \varepsilon.$$
Thus, it holds
$$\partial_t \tilde C\le d_c\partial_{xx}\tilde C-\tilde C(1-\varepsilon)+se^{-\gamma_ht}(1+\varepsilon).$$
Then, by applying the similar approach as that in the proof of Lemma 2.6 in \cite{PWZ}, we
can finish the proof of Proposition \ref{prop: H go to 0 in finial zone}.
\end{proof}

\begin{remark}\label{rm: g=1}
Note that, in the proof of Proposition \ref{prop: H go to 0 in finial zone}, it is necessary to assume $g>1$. For the critical case $g=1$, we believe there still exists a bump on the profile of $H$. Recall $(\overline u,\underline v)$ defined as \eqref{definition of super sol system} and \eqref{definition of super sol system d>1}. To show the bump phenomena on $H$, we need to use the solution of
$$\partial_t H= d_h\partial_{xx} H+b H(1-H-\underline v)$$
to construct super-solution for $H$, and use the solution of 
$$\partial_t H= d_h\partial_{xx} H-b H(H+\overline u),$$
to construct sub-solution for $H$ on $|x|\le c_0t$ with $0<c_0<c_c$. Since the computation is similar as what have been done in Proposition \ref{prop:inequality super solution}, we will not provide more details in the present paper. 
\end{remark}

\subsubsection{Construction of the sub-solution}
Let us fix $c_0<c_c<c_1$. We will construct a sub-solution on $\Omega_0(T)$ defined as \eqref{def:omega-un}  for the following system:
\begin{equation}\label{critical competition system sub solution}
\left\{
\begin{aligned}
&\partial_t\tilde u=\partial_{xx} \tilde u+a\tilde u(1-\tilde u-\tilde v),\\
&\partial_t\tilde v=d_c\partial_{xx}\tilde v+\tilde v(1-\tilde v-\tilde u)+se^{-\gamma_ht}(\tilde u+\tilde v),\\
&\tilde v(t,x)\ge se^{-\gamma_ht},\ (t,x)\in \Omega_0(T).
\end{aligned}
\right.
\end{equation}
Thus, in this subsection, we define the operators
\begin{equation}\label{def of sup and sub sol}
N_1[\tilde u,\tilde v]:=\partial_t\tilde u-\partial_{xx} u-\tilde u(1-\tilde u-\tilde v)\quad \text{ and } \quad  N_2[\tilde u,\tilde v]:=\partial_t\tilde v-d_c\partial_{xx}\tilde v-\tilde v(1-\tilde v-\tilde u)-se^{-\gamma_ht}(\tilde u+\tilde v).
\end{equation}
Note that, the system \eqref{critical competition system sub solution} is a competition system since the condition
\begin{equation}\label{condition of competition system}
\tilde v(t,x)\ge se^{-\gamma_ht}\quad\text{for all}\quad(t,x)\in \Omega_0(T).
\end{equation}
Recall that $V_{1}$ is a traveling wave solution with speed $c_1$ defined as \eqref{single-V1}.
The construction of the sub-solution is more involved than the super-solution.

We introduce $\alpha_1(t,x)$ and $\alpha_2(t,x)$ the solutions of the Cauchy problems
\begin{equation}
\label{def-f-h}
\left\{
\begin{aligned}
\partial_t\alpha_1&= \partial_{xx}\alpha_1,\\
\alpha_1(0,x)&=B_2\mathbf 1 _{(-1,1)}(x),
\end{aligned}
\right.
\qquad 
\left\{
\begin{aligned}
\partial_t\alpha_2&= \partial_{xx}\alpha_2,\\
\alpha_2(0,x)&=B_3e^{-k|x|},
\end{aligned}
\right.
\end{equation}
and consider a sub-solution $(\underline u,\overline v)$ for which all parameters are positive:
\begin{equation}\label{definition of sub sol system}
\left\{
\begin{aligned}
\underline u(t,x)&:=\beta(t)\alpha_1(t,x)-\alpha_2(t,x),\\
\overline v(t,x)&:=V_{1}(x-c_1t-\zeta_0)+V_{1}(-x-c_1t-\zeta_0)-1-\underline u(t,x)+\frac{B_4}{t^{1+\theta}},
\end{aligned}
\right.
\end{equation}
where
$$
\beta(t):=\exp \frac{B_4}{\delta(1+t)^{\delta}},
$$
and $V_1$ is the traveling wave solution with speed $c_1>c_c$ defined as \eqref{single-V1}.

\begin{proposition}\label{prop:sub}  Let $0<\delta<\theta <\frac 12$ and $k>0$  be given. Let us set $B_3=\gamma B_2$ with $0<\gamma<1$.
Then there exists $T^{*}>0$ such that, for all $0<B_2<1$, $B_4>1$, and $\zeta _0>0$, $(\underline u,\overline v)$ is a sub-solution in $\Omega_0(T^{*})$.
\end{proposition}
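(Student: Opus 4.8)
The plan is to verify directly the two differential inequalities defining a sub-solution, namely $N_1[\underline u,\overline v]\le 0$ and $N_2[\underline u,\overline v]\ge 0$ on $\Omega_0(T^{*})$, together with the structural condition $\overline v\ge se^{-\gamma_ht}$ (which is immediate, since $\overline v=1-w_+-w_--\underline u+B_4t^{-1-\theta}$ with $w_\pm:=1-V_1(\xi_\pm)$ and $|\underline u|$ both small, so $\overline v$ is close to $1$). By evenness of $(\underline u,\overline v)$ it suffices to work in $\Omega_0^+(T):=\Omega_0(T)\cap(T,\infty)\times[0,\infty)$, where $\xi_\pm:=\pm x-c_1t-\zeta_0$ satisfy $\xi_+\le-(c_1-c_0)t$ and $\xi_-\le-c_1t$, so that $0\le w_\pm\le Ce^{-\lambda_1(c_1-c_0)t}$ by \eqref{estimate of V at infinity-un} and $V_1'<0$. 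I first record the elementary facts $1\le\beta(t)\le\beta(0)$, $\beta'(t)=-B_4(1+t)^{-1-\delta}\beta(t)<0$, and — from the Gaussian representation $\alpha_i(t,x)=(G(t,\cdot)*\alpha_i(0,\cdot))(x)$ together with $\sup_{z\ge0}ze^{-z}=1/e$ — the global bounds $0\le\alpha_i(t,x)\le CB_2t^{-1/2}$ and $|\partial_t\alpha_i(t,x)|=|\partial_{xx}\alpha_i(t,x)|\le CB_2t^{-3/2}$, hence $|\underline u(t,x)|\le CB_2t^{-1/2}$.

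The next step is to compute both operators exactly. Since $\alpha_1,\alpha_2$ solve the heat equation with unit diffusion (matching the $\tilde u$-equation), one gets the clean identity $\partial_t\underline u-\partial_{xx}\underline u=\beta'(t)\alpha_1$, and with $1-\underline u-\overline v=w_++w_--B_4t^{-1-\theta}$ this gives
$$
N_1[\underline u,\overline v]=-B_4(1+t)^{-1-\delta}\beta\alpha_1-a\,\underline u\big(w_++w_--B_4t^{-1-\theta}\big).
$$
For $N_2$, I replace $-c_1V_1'(\xi_\pm)-d_cV_1''(\xi_\pm)$ by $V_1(\xi_\pm)\big(1-V_1(\xi_\pm)\big)$ via the travelling-wave ODE \eqref{single-V1}, use $\partial_t\underline u-d_c\partial_{xx}\underline u=\beta'\alpha_1+(1-d_c)\big(\beta\,\partial_t\alpha_1-\partial_t\alpha_2\big)$, and expand $\overline v(1-\overline v-\underline u)$; the cancellation $(w_++w_-)^2-(w_+^2+w_-^2)=2w_+w_-$ then produces
$$
N_2[\underline u,\overline v]=B_4t^{-1-\theta}+B_4(1+t)^{-1-\delta}\beta\alpha_1+2w_+w_--(1-d_c)\big(\beta\,\partial_t\alpha_1-\partial_t\alpha_2\big)+\mathcal R,
$$
where $\mathcal R$ gathers $-(1+\theta)B_4t^{-2-\theta}$, $-2(w_++w_-)B_4t^{-1-\theta}$, $\underline u(w_++w_-)$, $-\underline u B_4t^{-1-\theta}$, $B_4^2t^{-2-2\theta}$ and $-se^{-\gamma_ht}(\underline u+\overline v)$.

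Then I check the signs. For $N_1\le0$ I split on the sign of $\underline u$: where $\underline u\le0$, the product $-a\underline u(w_++w_--B_4t^{-1-\theta})$ is $\le0$ once $w_++w_-\le B_4t^{-1-\theta}$ (true for $t$ large since $w_\pm$ are exponentially small), so $N_1<0$; where $\underline u>0$, I use $\underline u\le\beta\alpha_1$ and discard $-a\underline u(w_++w_-)\le0$ to get $N_1\le B_4\beta\alpha_1\big(at^{-1-\theta}-(1+t)^{-1-\delta}\big)\le0$ for $t$ large, because $\delta<\theta$. For $N_2\ge0$ I show the single polynomial term $P:=B_4t^{-1-\theta}$ dominates everything else: $B_4(1+t)^{-1-\delta}\beta\alpha_1$ and $2w_+w_-$ are $\ge0$; the diffusion-mismatch term satisfies $|(1-d_c)(\beta\partial_t\alpha_1-\partial_t\alpha_2)|\le Ct^{-3/2}=o(t^{-1-\theta})$ by the heat-kernel derivative bound and $\theta<\tfrac12$; and each term of $\mathcal R$ is either exponentially small or $o(t^{-1-\theta})$, using $|\underline u|\le CB_2t^{-1/2}$, $|\underline u+\overline v|=|1-w_+-w_-+B_4t^{-1-\theta}|\le2$, and $w_\pm$ exponentially small. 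Hence $N_2\ge\tfrac12 B_4t^{-1-\theta}>0$ for all $t\ge T^{*}$ with $T^{*}$ large (depending on $\delta,\theta,k,B_4$ and the model parameters, but uniformly in $B_2\in(0,1)$ and $\zeta_0>0$), which finishes the proof.

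The main obstacle is the $N_2$ inequality, specifically controlling the mismatch $(1-d_c)(\beta\partial_t\alpha_1-\partial_t\alpha_2)$: since $\alpha_1,\alpha_2$ are tuned to solve the $u$-equation's principal part (so that $N_1$ stays clean), they are off by this error from the $v$-equation's principal part, and the whole design of $\overline v$ — the cushion $B_4t^{-1-\theta}$ and the slowly varying prefactor $\beta(t)=\exp\frac{B_4}{\delta(1+t)^\delta}$ — is exactly what lets a polynomially small term beat that error; this is what forces $\delta<\theta<\tfrac12$, with $\theta<\tfrac12$ making $t^{-1-\theta}$ dominate the $O(t^{-3/2})$ heat error and $\delta<\theta$ making the $\beta'$-decay $(1+t)^{-1-\delta}$ beat the $at^{-1-\theta}$ generated in $N_1$. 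The auxiliary profile $\alpha_2$ with exponential tail $e^{-k|x|}$ only serves to push $\underline u$ below zero near $|x|=c_0t$ (needed later when matching boundary data in the comparison argument), and here contributes only harmless lower-order terms.
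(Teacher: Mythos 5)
Your proof is correct and follows essentially the same route as the paper's: the same exact expansion of $N_1$ and $N_2$ using the travelling-wave ODE and the heat-equation structure of $\alpha_1,\alpha_2$, with $\delta<\theta$ making $\beta'\alpha_1$ absorb the $B_4t^{-(1+\theta)}$ term in $N_1$, and $\theta<\tfrac12$ letting the cushion $B_4t^{-(1+\theta)}$ dominate the $O(t^{-3/2})$ diffusion-mismatch term $(1-d_c)(\beta\partial_t\alpha_1-\partial_t\alpha_2)$ and the exponentially small cross terms in $N_2$ (the paper organizes this as $I_1+\dots+I_6$ and quotes Lemma 5.3 of \cite{AX} for the heat-kernel bounds you derive directly). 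The only discrepancy is that you let $T^*$ depend on $B_4$, whereas the statement asserts uniformity in $B_4>1$; note, however, that the paper's own estimate involves $\Vert\beta\Vert_\infty=e^{B_4/\delta}$ and so carries the same implicit dependence, which is harmless in the subsequent application because Proposition \ref{prop:first} couples $B_2$ to $B_4$ through \eqref{B2 B4} so that $B_2\Vert\beta\Vert_\infty$ stays bounded.
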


\begin{proof}{\it of Proposition \ref{prop:sub}.} Since $\underline u(t,\cdot)$ and $\overline v(t,\cdot)$ are even functions, it is enough to work for $t\geq T$ and $0\leq x<c_0t$.  For simplicity of notations, we shall use the shortcuts  $\xi_{\pm}:=\pm x-c_1t-\zeta_0$. Since $\xi_-\leq -c_1t$ and $\xi_+\leq -(c_v^{**}-c_2)t$, it follows from $V_1'<0$ and \eqref{estimate of V at infinity-un} that  there exist $C_->0$ and $C_+>0$ such that, for $T>0$ large enough,
\begin{equation}\label{estimate of 1-V}
1-V_1(\xi_-)\leq C_-e^{-\lambda_1c_1t} \quad \text{ and }\quad  1-V_1(\xi_+)\leq C_+e^{-\lambda_1(c_1-c_0)t},\quad  \text{for all}\ (t,x)\in \Omega_0^+(T),
\end{equation}
where $\Omega_0^+(T)=\Omega_0(T)\cap (T,\infty)\times [0,\infty)$. Moreover, up to enlarging $T>0$ if necessary, there exists $0<\rho<\frac 1 3$ such that 
\begin{equation}
\label{rho}
0<1-V_1(\xi_\pm)\leq \rho, \quad  \text{for all}\ (t,x)\in \Omega_0^+(T).
\end{equation}

By some straightforward computations and  \eqref{def-f-h}, one has
\begin{eqnarray*}
N_1[\underline u,\overline v]&=&\beta'\alpha_1-(\beta\alpha_1-\alpha_2)(2-V_1(\xi_+)-V_1(\xi_-)-B_4t^{-(1+\theta)})\\
&\leq &-B_4(1+t)^{-(1+\delta)}\beta\alpha_1+B_4\beta\alpha_1t^{-(1+\theta)}+\alpha_2\left(2-V_1(\xi_+)-V_1(\xi_-)-B_4t^{-(1+\theta)}\right),
\end{eqnarray*}
since $2-V_1(\xi_+)-V_1(\xi_-)>0$.  Thus, it follows from \eqref{estimate of 1-V} that
$$
N_1[\underline u,\overline v]\leq B_4\beta\alpha_1\left(  -(1+t)^{-(1+\delta)}+t^{-(1+\theta)}\right)+\alpha_2\left(C_-e^{-\lambda_1c_1t}+C_+e^{-\lambda_1(c_1-c_0)t}-B_4t^{-(1+\theta)}\right).
$$
Since  $\delta<\theta$, it follows that, for $T>0$ large enough,  $N_1[\underline u,\overline v]\le 0$ in $\Omega_0^{+}(T)$.

On the other hand, by some straightforward computations and \eqref{def-f-h}, one has 
\begin{eqnarray*}
N_2[\underline u,\overline v]:=I_1+\cdots +I_6,
\end{eqnarray*}
where
\begin{eqnarray*}
I_1&:=&\beta\alpha_1\left(2-V_1(\xi_+)-V_1(\xi_-)+ B_4(1+t)^{-(1+\delta)}\right),\\
I_2&:=&(1-V_1(\xi_-))\left(2-2V_2(\xi_+)-\alpha_2+se^{-\gamma_ht}\right),\\
I_3&:=&-(1-V_1(\xi_+))\alpha_2,\\
I_4&:=&(d_c-1)(\beta \partial_t\alpha_1- \partial_t\alpha_2),\\
I_5&:=&B_4t^{-(1+\theta)}\left(2V_1(\xi_-)+2V_1(\xi_+)-3+B_4t^{-(1+\theta)}-\beta\alpha_1+\alpha_2-(1+\theta)t^{-1}\right),\\
I_6&:=&-se^{-\gamma_ht}V_1(\xi_+).
\end{eqnarray*}
Since $0<V_1<1$, we have $I_1\geq 0$. From \eqref{estimate of 1-V}, there exists $C_1,C_2>0$ such that
$$
I_2\geq -(1-V_1(\xi_-))\alpha_2\ge -C_1t^{-\frac{1}{2}}e^{-\lambda_1c_1t},
$$
and $I_3\geq -C_2t^{-\frac{1}{2}}e^{-\lambda_1(c_1-c_0)t}$. Last, from \eqref{rho} and the estimates in Lemma 5.3 in \cite{AX}, there exits $C_3,C_4>0$ such that
$$
I_4+I_5\ge B_4t^{-(1+\theta)}\left(1-3\rho-\Vert \beta\Vert _\infty C_3 t^{-\frac 12}-(1+\theta)t^{-1}\right)-C_3\vert d_c-1\vert (\Vert \beta\Vert _\infty +1)t^{-\frac 32}\ge C_4B_4t^{-(1+\theta)},
$$ 
since $\theta<\frac 12$ and $0<\rho <\frac 13$. As a result, for $T^*>0$ large enough, $N_2[\underline u,\overline v]\geq 0$ in $\Omega_0^{+}(T^*)$. Moreover, by enlarging $T^*$ if necessary, \eqref{condition of competition system} holds for all $(t,x)\in \Omega_0(T^*)$. 
Note that, time $T^*$ in Proposition \ref{prop:sub} is independent on $0<B_2<1$, $B_4>1$, and $\zeta _0>0$. Therefore, we can reduce $B_2$ and to enlarge $B_4$ and $\zeta _0$ such that \eqref{condition of competition system} holds for all $(t,x)\in\Omega_0(T^*)$.
The proof of Proposition \ref{prop:sub} is complete.
\end{proof}

\subsubsection{Proof of Theorem \ref{th: bump}}

 To apply the comparison principle, we also need the suitable order on the boundary of the domain, which will be obtained by choosing a suitable $k$ and the estimate from Lemma 5.1 in \cite{AX}.

\begin{proposition}\label{prop:first}
Assume $d_c(1+s)>a$ and $g>1$. Let $(F,C,H)$ be the solution of \eqref{fch-equation} with initial data satisfying \eqref{initial data}.
Let $0<\delta<\theta <\frac 12$ be given. Let us fix $k:=\frac{c_0}{2}>0$, and set $B_3=\gamma B_2$ with $0<\gamma<1$. 
Then there exist $T^{**}>0$, $0<B_2<1$, $B_4>1$, and $\zeta _0>0$ such that
$$
\underline u(t,x)\leq F(t,x)\quad \text{ and }\quad  C(t,x)\le \overline v(t,x), \quad \text{ for all }\; t\geq T^{**}, \vert x\vert \leq c_0t,
$$
where $(\underline u,\overline v)$ is given by \eqref{definition of sub sol system}.
\end{proposition}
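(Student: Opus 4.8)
The plan is to verify the comparison-principle hypotheses on the parabolic boundary of $\Omega_0(T^{**})$, i.e. on the two lateral rays $x=\pm c_0 t$, $t\ge T^{**}$, and on the bottom slice $t=T^{**}$, $|x|\le c_0 T^{**}$, so that Proposition \ref{prop: cp} applied to the competition system \eqref{critical competition system sub solution} (of which $(F,C)$ is a super-solution, thanks to $sH(F+C)\ge s e^{-\gamma_h t}(F+C)$ once $H\le e^{-\gamma_h t}$ from Proposition \ref{prop: H go to 0 in finial zone}, and the trailing-term sign works in our favor since $V_1<1$ so the extra $+B_4 t^{-(1+\theta)}$ in $\overline v$ and the condition \eqref{condition of competition system} are respected) yields $\underline u\le F$ and $C\le\overline v$ throughout $\Omega_0(T^{**})$. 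Proposition \ref{prop:sub} already gives the differential inequalities for $(\underline u,\overline v)$ on $\Omega_0(T^*)$ for \emph{every} admissible choice of $B_2\in(0,1)$, $B_4>1$, $\zeta_0>0$, with $T^*$ independent of these constants; so the whole game here is to exhibit one such triple making the boundary orderings hold. We will take $T^{**}\ge T^*$ (and also $\ge$ the times $T$ coming from Propositions \ref{prop: lower estimate on C a<1+s} and \ref{prop: H go to 0 in finial zone}).

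First I would handle the lateral boundary $x=c_0 t$ (the case $x=-c_0 t$ being identical by evenness). On this ray $\underline u(t,c_0 t)=\beta(t)\alpha_1(t,c_0 t)-\alpha_2(t,c_0 t)$; using the Gaussian tail bounds of Lemma 5.1 in \cite{AX} (or a direct heat-kernel estimate) with the choice $k=c_0/2$, the initial datum $B_3 e^{-k|x|}$ of $\alpha_2$ makes $\alpha_2(t,c_0 t)$ decay like $e^{-(k c_0 - k^2)t}=e^{-\frac{c_0^2}{4}t}$ up to polynomial factors, while $\alpha_1(t,c_0 t)$, coming from a compactly supported datum, decays at the strictly faster Gaussian rate $e^{-\frac{c_0^2}{4}t}$ times a further $t^{-1/2}$ — wait, these are comparable, so the point is rather that $\beta(t)\to\beta(\infty)\in(1,\infty)$ is bounded and the coefficient $B_3=\gamma B_2$ with $\gamma<1$ can be tuned so that $\alpha_2$ dominates $\beta\alpha_1$ on this ray; hence $\underline u(t,c_0t)\le 0\le F(t,c_0 t)$ for $t\ge T^{**}$, after possibly enlarging $T^{**}$. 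For the $\overline v$ inequality on $x=c_0 t$: since $\underline u(t,c_0t)\le 0$ we get $\overline v(t,c_0t)\ge V_1(\xi_+)+V_1(\xi_-)-1+B_4 t^{-(1+\theta)}$, and because $1-V_1(\xi_\pm)$ is exponentially small there (by \eqref{estimate of 1-V}) this is $\ge 1 - o(1)$; on the other hand Proposition \ref{prop: lower estimate on C a<1+s}, or rather the upper bound $C\le 1+e^{-\tau_c t}$ from Proposition \ref{prop: H go to 0 in finial zone}, gives $C(t,c_0t)\le 1+e^{-\tau_c t}$, and since the exponentially small corrections beat $e^{-\tau_c t}$ only if we're careful — so here we instead use the polynomial term: $\overline v(t,c_0 t)-C(t,c_0t)\ge B_4 t^{-(1+\theta)} - e^{-\tau_c t} - (\text{exp. small})\ge 0$ for $t\ge T^{**}$ because $B_4 t^{-(1+\theta)}$ dominates all the exponentially decaying terms. (We need $B_4>1$ so this holds from $T^{**}$ on; this is exactly why the polynomial cushion $B_4 t^{-(1+\theta)}$ was built into $\overline v$.)

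Next I would handle the bottom slice $t=T^{**}$, $|x|\le c_0 T^{**}$. Here $F(T^{**},x)\ge 0$ and $F$ is bounded above by $1$; and $C(T^{**},x)$ is a fixed continuous function, bounded below by a positive constant on the compact interval $[-c_0T^{**},c_0T^{**}]$ thanks to Proposition \ref{prop: lower estimate on C a<1+s} (indeed $C\ge 1-e^{-\gamma_c T^{**}}$ on $c_1 t\le|x|\le c_2 t$ and a parabolic Harnack/positivity argument on the inner part). To dominate from below we shrink $B_2$ (hence $\alpha_1$, and $\alpha_2=$ heat evolution of $\gamma B_2 e^{-k|x|}$, so $\underline u(T^{**},\cdot)$) to be as small as we like uniformly on the compact slice — so $\underline u(T^{**},x)\le F(T^{**},x)$ trivially once $B_2$ is small. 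For $C\le\overline v$ at $t=T^{**}$: with $\underline u(T^{**},\cdot)$ tiny and the shift parameter $\zeta_0$ taken large, $V_1(\pm x - c_1 T^{**}-\zeta_0)$ can be pushed arbitrarily close to $1$ on the compact slice (since $V_1(-\infty)=1$ and the argument $\to-\infty$ as $\zeta_0\to\infty$), so $\overline v(T^{**},x)\ge 1+B_4 T^{**-(1+\theta)}-o(1)\ge\sup_{x} C(T^{**},x)$, using the crude global bound $C\le 1+e^{-\tau_c t}\le 2$ (or even just $C$ bounded, from the $L^\infty$ a priori estimate). One must check the order of quantifiers is consistent with Proposition \ref{prop:sub}: $T^{*}$ is independent of $B_2,B_4,\zeta_0$, so after fixing $T^{**}\ge T^*$ we are free to choose $B_2$ small, then $B_4>1$ large, then $\zeta_0$ large, to satisfy all three boundary requirements simultaneously — and crucially the compatibility condition \eqref{condition of competition system}, which Proposition \ref{prop:sub} already guarantees holds for such a choice after possibly enlarging $T^{**}$.

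The main obstacle is the juggling of the many constants: the lower bound on $F$ and $C$ on the compact bottom slice, the lateral decay rates, and the cushion term $B_4 t^{-(1+\theta)}$ all have to be balanced so that the \emph{same} $(B_2,B_4,\zeta_0)$ works on \emph{all} parts of the parabolic boundary while keeping $T^{**}$ a legitimate (parameter-independent) choice; in particular, ensuring $\overline v\ge C$ on the lateral ray forces $B_4 t^{-(1+\theta)}$ to beat the exponentially small error terms from the start $t=T^{**}$, which is where enlarging $T^{**}$ and demanding $B_4>1$ come in, while ensuring $\underline u\le F$ on the bottom slice forces $B_2$ small, and ensuring $\overline v\ge C$ on the bottom slice then forces $\zeta_0$ large — the delicate point is verifying these three demands are not in conflict. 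Once the boundary orderings are in hand, the conclusion is immediate from Proposition \ref{prop: cp} on $\Omega_0(T^{**})$, noting that $(F,C)$ is a super-solution of \eqref{critical competition system sub solution} on that domain (using $H\le e^{-\tau_h t}\le e^{-\gamma_h t}$ after relabeling, and $F+C$ bounded), which completes the proof of Proposition \ref{prop:first} and, combined with Proposition \ref{prop: boundary condition super sol} and the heat-kernel bounds \eqref{expression of s}, of Theorem \ref{th: bump}.
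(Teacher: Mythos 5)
Your proposal follows essentially the same route as the paper: check the three pieces of the parabolic boundary of $\Omega_0(T^{**})$, use Proposition \ref{prop:sub} (with $T^*$ independent of $B_2,B_4,\zeta_0$) for the interior differential inequalities, note that $(F,C)$ is a super-solution of \eqref{critical competition system sub solution} via the exponential decay of $H$, and conclude by the competitive comparison principle; the roles you assign to $k=c_0/2$, to the cushion $B_4t^{-(1+\theta)}$ on the lateral rays, to small $B_2$ and large $\zeta_0$ on the bottom slice all match the paper's proof.

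Two small points. On the bottom slice your justification of $\underline u(T^{**},\cdot)\le F(T^{**},\cdot)$ is insufficient as written: you only invoke $F\ge 0$, but $\underline u=\beta\alpha_1-\alpha_2$ scales linearly in $B_2$ (since $B_3=\gamma B_2$), so shrinking $B_2$ reduces $|\underline u|$ without changing its sign, and where $\underline u>0$ you genuinely need a \emph{positive} lower bound $\inf_{|x|\le c_0T^{**}}F(T^{**},x)\ge\varepsilon_1>0$ (available from the spreading results/strong maximum principle, and this is exactly what the paper uses); you instead state a lower bound on $C$, which is not the relevant component. Second, on the lateral ray the domination of $\beta\alpha_1$ by $\alpha_2$ does not come from tuning $\gamma<1$: with $k=c_0/2$ both decay at the Gaussian rate $e^{-c_0^2t/4}$, but $\alpha_1$ (compactly supported datum) carries an extra factor $t^{-1/2}$, which together with the boundedness of $\beta$ is what forces $\underline u(t,\pm c_0t)\le 0$ for large $t$ — this is precisely Lemma 5.1 of \cite{AX}, which you cite, so the conclusion stands even though the stated reason wobbles.
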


\begin{proof}{\it of Proposition \ref{prop:first}.} We aim at applying the comparison principle in $\Omega_0(T)$ with a suitable $T>0$. From Proposition \ref{prop:sub}, for any $t\geq T^{*}$, we have a sub-solution $(\underline u,\overline v)$ for which $0<B_2<1$, $B_4>1$, and $\zeta_0>0$ are arbitrarily chosen. 
On the other hand, by the estimate on $H$ from \eqref{H to 0 exponentially} and the lower estimate on $C$ from Proposition \ref{prop: boundary condition super sol}, the solution $(F,C)$ of \eqref{fch-equation} is a super-solution of \eqref{critical competition system sub solution} for $(t,x)\in\Omega_0(T^*)$ up to enlarging $T^*$ if necessary.

We now focus on  $\vert x\vert =c_0t$, $t\geq T^*$. 
Recall the choice $k=\frac{c_2}{2}$ which follows from Lemma 5.1 in \cite{AX} that
$$
\underline u(t,\pm c_0 t) \leq 0\leq F(t,\pm c_0 t), \quad \text{for all}\ t\geq T^{*},
$$
up to enlarging $T^*$ if necessary.

Next, it follows from  Proposition \ref{prop: H go to 0 in finial zone} that $C(t,\pm c_0t)\le 1+ e^{-\gamma_ct}$ as $t\to\infty$. On the other hand, for any $t\geq T^{**}$,
$$
\overline v(t,\pm c_0 t)\geq 1+\frac{1}{2t^{1+\theta}}.
$$
As a result, up to enlarging $T^{**}$ if  necessary, one has
$$
C(t,\pm c_0 t)\leq \overline v(t\pm c_0 t), \quad \text{for all } t\geq T^{**}.
$$

Last we focus on the order of the initial data, namely $t=T^{**}$ and $\vert x\vert \leq c_0 T^{**}$. From Proposition \ref{prop: H go to 0 in finial zone}, there exists $\varepsilon_1,\varepsilon_2>0$ such that
$$
\inf_{|x|\le c_0T^{**}}F(T^{**},x)\geq \varepsilon_1,\sup_{|x|\le c_0T^{**}}C(T^{**},x)-1\leq \varepsilon_2.
$$
We now set 
\begin{equation}\label{B2 B4}
0<B_2 <\frac{\varepsilon_1}{2}e^{-\frac{B_4}{\delta(1+T^{**})^{\delta}}},
\end{equation}
 which implies
$$
\underline u(T^{**},x)\leq B_2 \beta(T^{**}) \leq \frac{\varepsilon_1}{2} \leq F(T^{**},x),\quad \text{ for all } \vert x\vert \leq c_0 T^{**}.
$$
And we chose $B_4>1$ and $\zeta_0>0$  large enough such that
\begin{equation*}\label{11}
\overline v(T^{**},x)\ge 2V_{1}(-\zeta_0)-1-\frac{\varepsilon_1}{2}+B_4(T^{**})^{-(1+\theta)}\geq 1+\varepsilon_2,
\end{equation*}
which implies
$$
C(T^{**},x)\leq \overline v(T^{**},x), \quad\text{for all}\quad \vert x\vert \leq c_0 T^{**}.
$$

As a consequence, the comparison principle can be applied in $\Omega_2(T^{**})$, which concludes the proof of Proposition \ref{prop:first}.
\end{proof}

Theorem \ref{th: bump} follows immediately from Proposition \ref{prop: boundary condition super sol} and \ref{prop:first}, and some classical estimates on  solutions of the heat equation. We refer to Lemma 5.1 in \cite{AX} for more details.

\subsection{Asymptotic profiles for the low conversion rate case}
In this subsection, we deal with the proof of statement (2) in  Proposition \ref{th:profile c_c>c_f}. The key point of studying the asymptotic profiles in the low conversion rate case is to provide a uniform lower estimate of $H$ in the final zone. In our previous study \cite{MX}, for the general case, we thought it is hard to give the necessary and sufficient condition under which $H$ is uniform positive from below.
In this subsection, we first show that $H$ is uniformly positive from below for all $(t,x)\in\mathbb{R}^+\times\mathbb{R}$ if and only if $g<1$. Then, we show that $C$ and $H$ would converge to $(C^*,H^*)$ as $t\to+\infty$ in the final zone.

\subsubsection{A uniformly lower estimate on $H$ for the low conversion rate case}
In this subsection, we aim to prove the following proposition.
\begin{proposition}\label{prop: uniform lower estimate on H g<1}
Assume $g<1$ and $d_c(1+s)>0$. Let $(F,C,H)$ be the solution of \eqref{fch-equation} with initial data satisfying \eqref{initial data}. Then there exists $\varepsilon_h>0$ such that
$$\underset{t\to+\infty}{\liminf}\underset{x\in\mathbb{R}}{\inf}H(t,x)\ge \varepsilon_h.$$
\end{proposition}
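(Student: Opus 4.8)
The plan is to argue by contradiction, exploiting that for $g<1$ the homogeneous state $(C,H)=(1,0)$ is linearly unstable for the $H$-equation of the limit system \eqref{limit system C H}: there the linearized reaction coefficient equals $b(1-g\cdot 1)=b(1-g)>0$, which is precisely the ``hair-trigger'' mechanism that should prevent $H$ from vanishing. Suppose then that the conclusion fails, so that there are $t_n\to+\infty$ and $x_n\in\mathbb R$ with $H(t_n,x_n)\to 0$; by the sub-solution of Section 2 (see \eqref{sub solution of H on leading edge}) one has $H\to 1$ uniformly on $\{|x|\ge(c^*+\varepsilon)t\}$, whence $\limsup_n|x_n|/t_n\le c^*=c_c$. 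I would first pass to the blow-up limit along $(t_n,x_n)$: since $F\to 0$ uniformly by Proposition \ref{pr:a not= 1+s F} and $C$ is globally bounded, the shifted solutions $(F,C,H)(t_n+\cdot,x_n+\cdot)$ converge locally uniformly to $(0,C_\infty,H_\infty)$, an entire solution of \eqref{limit system C H} with $0\le H_\infty\le 1$, $C_\infty,H_\infty$ bounded, and $H_\infty(0,0)=0$. The strong maximum principle forces $H_\infty\equiv 0$, so $C_\infty$ solves the scalar KPP equation $\partial_tC_\infty=d_c\partial_{xx}C_\infty+C_\infty(1-C_\infty)$, and comparison with the logistic ODE run from $t=-\infty$ gives $C_\infty\le 1$. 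By a diagonal extraction there are thus $L_n\to+\infty$ (with $L_n\le t_n/2$) and $\eta_n\to 0^+$ such that $F+C\le 1+2\eta_n$ and $H\le\eta_n$ on the box $Q_n:=(t_n-L_n,t_n+L_n)\times(x_n-L_n,x_n+L_n)$. Because $g<1$, this yields $1-H-g(F+C)\ge(1-g)/2$ on $Q_n$ for $n$ large, hence
$$\partial_t H\ \ge\ d_h\partial_{xx}H+\tfrac{b(1-g)}{2}\,H\qquad\text{in }Q_n.$$

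Next I would insert a principal-eigenfunction sub-solution into $Q_n$. Fix $R>0$ once and for all with $R^2>d_h\pi^2/\bigl(2b(1-g)\bigr)$, let $\phi_R>0$ be the first Dirichlet eigenfunction of $-\partial_{xx}$ on $(-R,R)$ (eigenvalue $(\pi/2R)^2$), normalized by $\phi_R(0)=\max\phi_R=1$, and put $\sigma:=\tfrac{b(1-g)}{2}-d_h(\pi/2R)^2>0$. For $n$ large $R<L_n$, and $\psi_n(t,x):=m_n\,\phi_R(x-x_n)\,e^{\sigma(t-t_n+L_n)}$ is a sub-solution of the linear inequality above on $(t_n-L_n,t_n)\times(x_n-R,x_n+R)$, vanishing on $\{|x-x_n|=R\}$; choosing $m_n>0$ so that $m_n\phi_R(x-x_n)\le H(t_n-L_n,x)$ for $|x-x_n|\le R$, the comparison principle and evaluation at $(t_n,x_n)$ give $H(t_n,x_n)\ge m_n e^{\sigma L_n}$. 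This contradicts $H(t_n,x_n)\to 0$ as soon as $m_n e^{\sigma L_n}\not\to 0$, i.e.\ as soon as $\inf_{|x-x_n|\le R}H(t_n-L_n,x)$ does not decay faster than $e^{-\sigma L_n}$.

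The remaining, and main, difficulty is precisely to control the initial amplitude $m_n$ from below. Here I would bring in the last time $\tau_n\in(0,t_n)$ with $H(\tau_n,x_n)=\eta_0$, where $\eta_0>0$ is a fixed small threshold chosen so that $1-\eta_0-g(1+\delta)\ge(1-g)/2$ for the relevant $\delta$; it exists since $H(0,\cdot)\equiv 1$ and $H(t_n,x_n)\to 0$, and $H(t,x_n)<\eta_0$ for all $t\in(\tau_n,t_n]$. If $t_n-\tau_n$ stays bounded along a subsequence, a parabolic Harnack inequality for the positive solution $H$ (whose zero-order coefficient $b(1-H-g(F+C))$ is bounded) propagates $H(\tau_n,x_n)=\eta_0$ forward a bounded amount of time and yields $H(t_n,x_n)\ge c\,\eta_0$ for a fixed $c>0$, an immediate contradiction. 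If $t_n-\tau_n\to+\infty$, then $\tau_n<t_n-L_n$ for $n$ large (otherwise $(\tau_n,x_n)\in Q_n$, where $H\le\eta_n<\eta_0$), so $Q_n$ carries a long time-window of smallness of $H$, and the point is to transport the fixed lower bound $H(\tau_n,x_n)=\eta_0$ forward onto the past face $\{t=t_n-L_n\}$ of $Q_n$: when $t_n-L_n-\tau_n$ stays bounded this is again one application of Harnack, and when it diverges one must instead study the entire solution obtained by blowing up along the long segment $\{x_n\}\times(\tau_n,t_n-L_n]$, combining $F_\infty\equiv 0$ with the a priori bound $C\ge 1$ in the interior final zone (Propositions \ref{prop: lower estimate on C a<1+s} and \ref{pr:a not= 1+s F}; the borderline case $|x_n|/t_n\to c_c$ being treated separately and more easily near the spreading front) and the Lyapunov structure of the ODE system \eqref{Ode system} to rule out $H$ remaining this small. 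I expect this reconciliation of the blow-up box with the earlier crossing time --- i.e.\ the sharpened study of bounded entire solutions of \eqref{limit system C H} that are small along a space-time line --- to be the technical heart of the argument, and the place where the finer analysis of the limit system (as opposed to \cite{MX}) is required.
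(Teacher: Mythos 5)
Your overall strategy is the right one and overlaps substantially with the paper's: argue by contradiction, pass to blow-up limits where $F_\infty\equiv 0$, exploit that for $g<1$ the state $H=0$, $F+C\le 1$ makes the zero-order coefficient of the $H$-equation strictly positive, insert a principal Dirichlet eigenfunction as a sub-solution, and track the last crossing time of a fixed threshold. However, there is a genuine gap exactly where you flag it, and it is not a minor technicality: your argument reduces the proposition to showing that $m_n e^{\sigma L_n}\not\to 0$, i.e.\ that $\inf_{|x-x_n|\le R}H(t_n-L_n,x)$ does not decay faster than $e^{-\sigma L_n}$, and nothing in the proposal establishes this. The forward-propagation scheme is circular: the exponential gain $e^{\sigma L_n}$ only helps if you already have a quantitative lower bound on $H$ at the past face of $Q_n$, which is a statement of the same nature as the proposition itself. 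Moreover, the fallback you suggest for the divergent case --- blowing up along the long segment $\{x_n\}\times(\tau_n,t_n-L_n]$ and invoking the Lyapunov structure --- cannot work as stated, because $(C_\infty,H_\infty)\equiv(1,0)$ is a perfectly admissible bounded entire solution of the limit system \eqref{limit system C H}; smallness of $H$ along a space-time line is not, by itself, contradictory at the level of entire solutions. The contradiction must come from the \emph{dynamic} instability of that state seen from an initial condition where $H$ equals the fixed threshold, and that requires a persistence statement that is uniform over initial data.

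This is precisely what the paper supplies and your proposal lacks: a lemma (Lemma \ref{low estimate of H 3}) asserting that there is a single $\varepsilon_1>0$ such that $\limsup_{t\to\infty}H(t,0)\ge\varepsilon_1$ for \emph{every} initial datum in the invariant set $\Psi$ with $H_0\not\equiv 0$. Its proof is not a forward-in-time exponential estimate but a monotone iteration: using $F+C\le(1+s+\delta)$ off $B_R$ and $\le 1+\delta$ on $B_R$, one compares $H$ with the solution of a spatially heterogeneous logistic equation started from the \emph{stationary} sub-solution $\eta\phi_R$; this solution increases in time to a positive steady state $p_{n,R,\delta}$ which is shown to be independent of $\eta\in(0,\eta_0]$ and to satisfy $p_{n,R,\delta}(0)\ge\eta_0\phi_R(0)$, hence bounded below uniformly in the data. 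With this uniform lemma in hand, the crossing-time argument closes cleanly: taking $s_n<t_n$ with $H(s_n,x_n)=\varepsilon_1/2$ and $H(\cdot,x_n)\le\varepsilon_1/2$ on $[s_n,t_n]$, the blow-up limit at $(s_n,x_n)$ starts from $\widetilde H_0(0)=\varepsilon_1/2>0$, so the lemma forces $\limsup_t\widetilde H(t,0)\ge\varepsilon_1$, while $t_n-s_n\to\infty$ forces $\widetilde H(t,0)\le\varepsilon_1/2$ for all $t\ge 0$ --- a contradiction. You should replace your quantitative propagation of $m_n$ by such a uniform persistence lemma (or prove an equivalent one); as written, the case $t_n-L_n-\tau_n\to\infty$ remains open and the proof is incomplete.
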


Before stating our arguments, we would like to introduce some basic properties at first.
Denote $X$ as a Banach space of $\mathbb{R}^3$-valued bounded and uniformly continuous functions on $\mathbb{R}$ endowed with the usual sup-norm. Let us define $\Psi\subset X$ as
$$\Psi=\{(\psi_1,\psi_2,\psi_3)\in X:\ \psi_1\ge 0,\ \psi_2\ge 0,\ \mbox{and}\ 0\le\psi_3\le 1\ \mbox{for all}\ x\in\mathbb{R}\}.$$
Let us denote the  nonlinear semiflow generated by the system (\ref{fch-equation}) by $Z(t)$,
then one may obtain that, it holds
$$Z(t)[\Psi]\subset \Psi\ \ \mbox{for all}\ \  t>0.$$

First we prove a lemma  which plays a key role in the proof of Proposition \ref{prop: uniform lower estimate on H g<1}. In particular, it holds for any $(F_0,C_0,H_0)\in \Psi$ satisfying $H_0\not\equiv 0$.  
\begin{lemma}\label{low estimate of H 3}
Let $(F,C,H)$ be the solution of \eqref{fch-equation} with initial data $(F_0,C_0,H_0)\in \Psi$ satisfying $H_0\not\equiv 0$. Then 
there exists $\varepsilon_1>0$ such that
\begin{equation*}
\underset{t\to+\infty}{\limsup}\ H(t,0)\ge \varepsilon_1.\\
\end{equation*}
\end{lemma}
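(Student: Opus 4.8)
The plan is to argue by contradiction: suppose $\limsup_{t\to+\infty} H(t,0) < \varepsilon_1$ for every $\varepsilon_1 > 0$, i.e. $H(t,0)\to 0$ as $t\to +\infty$. The point of the statement is that $H$ cannot be driven to extinction \emph{locally near the origin} even though the farmers invade with speed $c^*$: the reason is precisely that $g<1$, so that the ODE/PDE dynamics in the final zone admit the stable coexistence state $(0,C^*,H^*)$ with $H^*=(1-g)/(1+sg)>0$, and $H$ feels only a bounded amount of competition from $F+C$. I would first use parabolic regularity to pass to a limit: choose $t_n\to+\infty$, consider the shifted solutions $(F(t_n+t,x),C(t_n+t,x),H(t_n+t,x))$, which (by interior Schauder estimates and boundedness in $\Psi$) converge locally uniformly, along a subsequence, to an entire solution $(F_\infty,C_\infty,H_\infty)$ of \eqref{fch-equation} on $\mathbb{R}\times\mathbb{R}$, with $H_\infty(0,0)=0$ by our assumption. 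Since $0\le H_\infty\le 1$ and $H_\infty$ solves a parabolic equation, the strong maximum principle forces $H_\infty\equiv 0$ on $(-\infty,0]\times\mathbb{R}$, hence on all of $\mathbb{R}\times\mathbb{R}$.

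Next I would exploit $H_\infty\equiv 0$ to pin down $C_\infty$. With $H_\infty\equiv 0$, the pair $(F_\infty,C_\infty)$ is an entire solution of the competition system
\begin{equation*}
\left\{
\begin{aligned}
&\partial_tF_\infty=\partial_{xx}F_\infty+aF_\infty(1-F_\infty-C_\infty),\\
&\partial_tC_\infty=d_c\partial_{xx}C_\infty+C_\infty(1-F_\infty-C_\infty),
\end{aligned}
\right.
\end{equation*}
with $0\le F_\infty+C_\infty$. Here I would invoke the a priori information already established in this section for the case $d_c(1+s)>a$: by \eqref{unif 0 F} of Proposition \ref{pr:a not= 1+s F}, $F(t,x)\to 0$ uniformly in $x$, so $F_\infty\equiv 0$; and by \eqref{C>=1 a<1+s}, $C$ is bounded below by $1$ in the final zone, while the super-solution used for \eqref{upper C H} gives $C\le 1+o(1)$ on the relevant range — more directly, since $F_\infty\equiv 0$, $C_\infty$ solves $\partial_t C_\infty=d_c\partial_{xx}C_\infty+C_\infty(1-C_\infty)$ with $C_\infty$ bounded, and one shows $C_\infty\equiv 1$ by comparing with spatially homogeneous sub/super-solutions of the logistic ODE $c'=c(1-c)$ run from $t=-t_0$ and letting $t_0\to+\infty$ (this is exactly the argument already used in the proof of Claim \ref{cl:aa}).

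Finally, with $F_\infty\equiv 0$ and $C_\infty\equiv 1$, return to the $H$-equation for the original solution: since $F(t,x)+C(t,x)\to 1$ uniformly on $|x|\le c_0 t$, for any $\eta>0$ there is $T_\eta$ with $F+C\le 1+\eta$ on $|x|\le c_0t$, $t\ge T_\eta$, so on that region
$$\partial_t H\ge d_h\partial_{xx}H+bH\big(1-H-g(1+\eta)\big).$$
Choosing $\eta$ small enough that $1-g(1+\eta)>0$ (possible precisely because $g<1$), the right-hand side is a KPP-type reaction with a positive stable state $\theta_\eta:=1-g(1+\eta)>0$. Since $H(T_\eta,\cdot)$ is positive on a nondegenerate set (it is positive everywhere, as $H>0$ for $t>0$ by the strong maximum principle applied to the $H$-equation, whose zeroth-order coefficient is bounded), the hair-trigger/spreading result for the scalar KPP equation (Proposition \ref{prop of kpp equation}, applied on $|x|\le c_0t$ with $c_0$ chosen suitably — here one uses that the KPP front for that reaction has speed $2\sqrt{bd_h\theta_\eta}$, which may be smaller than $c_0$, so one should instead localize: compare $H$ on a large fixed ball $B_R$ with the principal-eigenvalue sub-solution $\underline\omega=\kappa\phi_R$ as in the proof of \eqref{aa}, valid once $R$ is large so that $\mu_1R^{-2}<b\theta_\eta$) yields $\liminf_{t\to+\infty}H(t,0)\ge\varepsilon_1$ for some $\varepsilon_1>0$, contradicting $H(t,0)\to0$.

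The main obstacle is the last step: one must be careful that the region $|x|\le c_0t$ on which the favorable inequality $1-g(F+C)\ge \theta_\eta>0$ holds is expanding only linearly while the KPP invasion for the reduced $H$-equation may be slow, so a naive application of Proposition \ref{prop of kpp equation} is not quite legitimate. The clean fix is the localized eigenvalue argument already deployed in Proposition \ref{prop: lower estimate on C a<1+s}: fix a large ball $B_R\subset\{|x|\le c_0 t\}$ for all large $t$ (taking, say, $x$ near $0$ so that $B_R$ stays inside the cone), use that $\mu_R=\mu_1R^{-2}<b\theta_\eta$ for $R$ large, and build the sub-solution $\underline\omega(t,x)=\kappa\phi_R(x)(1+e^{-\nu t})$ with $\nu<b\theta_\eta-\mu_R$; since $H$ is bounded below by a positive constant on $\partial$-data at some time $T_\eta'$ (again by the strong maximum principle, $H$ is positive and by Harnack locally bounded below), the comparison principle on $(T_\eta',\infty)\times B_R$ gives $H(t,0)\ge \kappa\phi_R(0)>0$ for all large $t$, which is the desired contradiction. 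The rest is routine.
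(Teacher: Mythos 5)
Your overall strategy --- contradiction, passage to an entire limit solution with $H_\infty\equiv 0$, and then a localized principal-eigenfunction sub-solution for the $H$-equation on a large fixed ball, using $g<1$ to keep the reaction coefficient $b(1-g(F+C))$ positive --- is the same mechanism as the paper's, and your ``clean fix'' correctly diagnoses why a naive appeal to Proposition \ref{prop of kpp equation} on the expanding region $|x|\le c_0t$ would be illegitimate. Still, there are two genuine gaps. First, the bound $F+C\le 1+\eta$ that feeds your sub-solution is not available on $|x|\le c_0t$: in the low conversion rate case $C$ tends to $C^*=(1+s)/(1+sg)>1$ in the final zone, so the assertion ``$F(t,x)+C(t,x)\to 1$ uniformly on $|x|\le c_0t$'' is false. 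What you actually need, and what the paper isolates as Claim \ref{claim 6}, is $\limsup_{t\to\infty}\sup_{x\in B_R}C(t,x)\le 1$ on the \emph{fixed} ball carrying $\phi_R$; this does not follow from your limit taken only along $(t_n,0)$. You must run the compactness argument along arbitrary bounded shifts $x_n\in B_R$ and note that, since $x_n$ stays at bounded distance from the origin where $H$ is assumed to vanish, the limit still satisfies $H_\infty\equiv 0$ by the strong maximum principle, whence $C_\infty\le 1$. (Only the upper bound is needed; your ``$C_\infty\equiv 1$'' would require a positive lower bound on $C_\infty$ that is neither available for general data in $\Psi$ nor relevant.) Outside $B_R$ the paper uses the crude bound $F+C\le 1+s+\delta$, which suffices because $\phi_R$ vanishes there.

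Second, and more consequential for how the lemma is used: your contradiction is run for a single fixed solution, so it only yields a constant $\varepsilon_1$ depending on $(F_0,C_0,H_0)$. The paper runs the contradiction over \emph{sequences of initial data} in $\Psi$ precisely to obtain an $\varepsilon_1$ uniform over the whole class, and this uniformity is what Proposition \ref{prop: uniform lower estimate on H g<1} invokes: there the lemma is applied to a limit solution $\widetilde H$ known to satisfy $\widetilde H(t,0)\le\varepsilon_1/2$, and the contradiction with $\limsup_t\widetilde H(t,0)\ge\varepsilon_1$ only closes if $\varepsilon_1$ is the \emph{same} constant. With a solution-dependent $\varepsilon_1$ the downstream argument collapses. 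To repair this you would have to make your sub-solution quantitative, tracing the lower bound $\kappa\phi_R(0)$ to constants depending only on $g,b,d_h,s$ --- which is exactly what the paper's detour through the stationary states $p_{n,R,\delta}$ and their limit $p_\infty$ with $p_\infty(0)\ge\eta_0$ accomplishes. (Two minor points: the eigenvalue condition should read $d_h\mu_R<b\theta_\eta$, and Proposition \ref{pr:a not= 1+s F} is stated for initial data of the form \eqref{initial data}, so its use for general $\Psi$-data is a step the paper itself glosses over.)
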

\begin{proof}\it{of Lemma \ref{low estimate of H 3}.}
\normalfont
For $F_0(x)+C_0(x)\equiv 0$, the lemma holds immediately with $\varepsilon_1=1.$ Hence, without loss of generality, we assume $F_0(x)+C_0(x)\not\equiv 0$.
We argue by contradiction by assuming there exist sequences
$$\{(F_{0,n},C_{0,n},H_{0,n})\}_{n\ge 0}\subset\Psi\ \ {\rm with}\ \  H_{0,n}\not\equiv0, \ \ \mbox{and}\ \ \{t_n\}_{n\ge0}\subset[0,\infty)\ \ \mbox{satisfying}\ \ t_n\to+\infty,$$
such that the following statement holds
\begin{equation}\label{construction of H (lower estimate) c<c*}
H_n(t,0)\le \frac{1}{n}\ \ \mbox{for all}\ \ t\ge t_n,
\end{equation}
wherein $(F_n,C_n,H_n)$ denotes the solution with the initial data $(F_{0,n},C_{0,n},H_{0,n})$. 
Then, one can claim that 
\begin{claim}\label{claim 6}
If the statement \eqref{construction of H (lower estimate) c<c*} holds true, then for any sequence $\{t'_n\}_{n\ge0}$ satisfying $t'_n\ge t_n$ and $R>0$, it holds:
\begin{equation}\label{uniform F to 0 in claim}
\lim_{n\to\infty}\sup_{t\ge0,x\in B_R}F_n(t'_n+t,x)= 0,
\end{equation}
\begin{equation}\label{uniform C<1 in claim}
\lim_{n\to\infty}\sup_{t\ge0,x\in B_R}C_n(t'_n+t,x)\le 1,
\end{equation}
\begin{equation}\label{uniform converget to 0 of H in claim}
\lim_{n\to\infty}\sup_{t\ge0,x\in B_R}H_n(t'_n+t,x)=0.
\end{equation}
\end{claim}

\begin{proof}\it{of Claim \ref{claim 6}.}
\normalfont
The statement \eqref{uniform F to 0 in claim} follows from the upper estimate on $F$ in \eqref{unif 0 F}.
Then we prove that \eqref{construction of H (lower estimate) c<c*} implies the statements \eqref{uniform C<1 in claim} and \eqref{uniform converget to 0 of H in claim} hold true.
To proceed by contradiction, we assume that there exists $R>0$, there exist $\delta>0$, $s_n>t_n$ and $x_n\in B_R$ such that for $n\ge N$,
$$H_n(s_n,x_n)\ge \delta.$$
Due to standard parabolic estimates, possibly along a subsequence, one may assume that 
\begin{equation*}
\left\{
\begin{aligned}
&\underset{n\to\infty}{\lim}F_n(s_n+t,x_n+x)= F_{\infty}(t,x),\\
&\underset{n\to\infty}{\lim}C_n(s_n+t,x_n+x)= C_{\infty}(t,x),\\
&\underset{n\to\infty}{\lim}H_n(s_n+t,x_n+x)= H_{\infty}(t,x).
\end{aligned}
\right.
\end{equation*}
The above convergences hold locally uniformly in $(t,x)\in\mathbb{R}\times \mathbb{R}^N$ and $(F_{\infty},C_{\infty},H_{\infty})$ is an entire solution of the following system
\begin{equation}\label{F+C infty system}
\left\{
\begin{aligned}
&\partial_tF_{\infty}=\partial_{xx} F_{\infty}+aF_{\infty}(1-C_{\infty}-F_{\infty}),\\
&\partial_tC_{\infty}=d_c\partial_{xx} C_{\infty}+C_{\infty}(1-C_{\infty}-F_{\infty})+s(F_{\infty}+C_{\infty})H_{\infty},\\
&\partial_tH_{\infty}=d_h\partial_{xx} H_{\infty}+bH_{\infty}(1-gF_{\infty}-gC_{\infty}-H_{\infty}).
\end{aligned}
\right.
\end{equation}

From the strong maximum principle, the construction \eqref{construction of H (lower estimate) c<c*}, and \eqref{uniform F to 0 in claim} , one has $H_{\infty}\equiv 0$ and $F_{\infty}\equiv 0$. However, since the sequence $\{x_n\}\subset B_R$ is relatively compact, we may assume that $\underset{n\rightarrow\infty}{\lim}\,x_n=x_{\infty}\in\widebar{B_R}$ by extracting sub sequence and hence $H_{\infty}(0,x_{\infty})\ge \delta$. This contradicts $H_{\infty}\equiv 0$ and yields \eqref{uniform converget to 0 of H in claim}.

Next, we assume that there exists $R'>0$, there exist $\delta'>0$, $s'_n>t_n$ and $x'_n\in B_R$ such that for $n\ge N$,
$$C_n(s_n,x_n)\ge 1+\delta.$$
By considering the limit system again, we get 
$$\partial_tC_{\infty}=d_c\partial_{xx} C_{\infty}+C_{\infty}(1-C_{\infty}),$$
which implies $C_{\infty}\le 1$. However, since the sequence $\{x'_n\}\subset B_R$ is relatively compact, we may assume that $\underset{n\rightarrow\infty}{\lim}\,x'_n=x'_{\infty}\in\widebar{B_R}$ by extracting sub sequence and hence $C_{\infty}(0,x'_{\infty})\ge 1+\delta'$. This contradicts $C_{\infty}\le 1$ and proves that the statement \eqref{uniform C<1 in claim} holds true.  
\end{proof}
\vspace{10pt}

Now, we can go back to the proof of Lemma \ref{low estimate of H 3}.  From \eqref{uniform F to 0 in claim} and \eqref{uniform C<1 in claim}, for any $R>0$ and small enough $\delta>0$ , for any $n$ large enough, one has
\[
(F_n+C_n)(t_n+t,x)\le (1+s+\delta)\chi_{\mathbb{R}^N\setminus B_R}+(1+\delta)\chi_{B_R}(x):=\widebar{G}(x)
\]
for all $t>0$ and $x\in\mathbb{R}$,
Then, by the comparison principle, we obtain
\[
H_n(t_n+t,x)\ge \wideubar{H}_n(t,x)\ \ \mbox{for all}\ \ t\ge 0,\ x\in\mathbb{R},
\]
wherein $\wideubar{H}_n$ is the solution of the equation
\begin{equation}\label{equation of sub solution of H_n g<1}
\left\{
\begin{array}{rl}
&\partial_t\wideubar{H}_n=d_h\partial_{xx}\wideubar{H}_n+b\wideubar{H}_n(1-\wideubar{H}_n-g\widebar{G}),\\
&\wideubar{H}_n(0,x)=H_n(t_n,x).
\end{array}
\right.
\end{equation}

Then, we consider a stationary sub-solution $\psi(0,x;\eta)$ for each $\eta>0$ as
\[
\psi(0,x;\eta)=\eta\phi_R(x),
\]
where $\phi_R(x)$ is the eigenfunction defined as \eqref{eigenfunction}. 
One can check that there exists $\eta_0>0$ such that for any $\delta$ small enough, $0<\eta\le \eta_0$ and $R$ large enough, the function $\psi(0,x)$ is a stationary sub-solution of the equation (\ref{equation of sub solution of H_n g<1}).
Therefore, the solution $\psi(t,x;\eta)$ of \eqref{equation of sub solution of H_n g<1} from initial data $\psi(0,x;\eta)$ is increasing in time, and converges to some positive stationary solution that denote by $p_{n,R,\delta}(x)$. 
Moreover, the stationary state
$p_{n,R,\delta}(x)$ does not depend on the choice of $\eta\in(0,\eta_0]$\ . 
\begin{claim}\label{claim  of stationary solution}
The stationary solution $p_{n,R,\delta}(x)$ does not depend on the choice of $\eta\in(0,\eta_0]$\ . 
\end{claim}
\begin{proof}\it{of Claim \ref{claim  of stationary solution}.}
\normalfont
To check this, let us change our notation for simplicity and denote the stationary sub-solution and  this stationary solution as $\psi$ and $p_{\eta}$. 
We first note that the comparison principle implies that $p_{\eta}\le p_{\eta'}$ for any $\eta <\eta'$. Next, let us assume by contradiction that there exists $\eta_1<\eta_0$ with $p_{\eta_1}\not\equiv p_{\eta_0}$. Hence, inferring from the strong maximum principle, one has $p_{\eta_1}< p_{\eta_0}$. Moreover, there exists a point $x_0\in B_R$ such that $\psi(0,x_0;\eta_0)>p_{\eta_1}(x_0)$. If not, then $\psi(0,x;\eta_0)\le p_{\eta_1}(x)$ for $x\in\mathbb{R}$, which implies $p_{\eta_1}\ge p_{\eta_0}$, and obtains a contradiction.

Then, we consider
\[
\eta^*=\sup\{\eta\in[\eta_1,\eta_0]\ ;\ \psi(0,x;\eta)\le p_{\eta_1}(x)\ \ \mbox{for all}\ \  x\in\mathbb{R}\}.
\]
One can deduce from the comparison principle and the strong maximum principle that 
\begin{equation}\label{xxx}
\psi(0,x;\eta^*)<\psi(t,x;\eta^*)<p_{\eta_1}(x)\ \ \mbox{for all}\ \ t>0,\ x\in\mathbb{R}. 
\end{equation}

On the other hand, from the definition of $\eta^*$, since $\psi$ has compact support $B_R$, there exists $x_0\in B_R$ such that $\psi(0,x_0;\eta^*)=p_{\eta_1}(x_0)$, which reaches a contradiction. 
\end{proof} 
\vspace{10pt}

Now, we can choose $\eta$ sufficiently small such that $\wideubar{H}_n(0,x)\ge\psi(0,x;\eta)$ for all $x\in\mathbb{R}$. Then, it follows from the comparison principle that for any
$R>0$ large enough and $\delta>0$ small enough and $n$ large enough
\begin{equation}\label{eq 8}
\liminf_{t\to\infty}H_n(t_n+t,x)\ge \liminf_{t\to\infty}\wideubar{H}_n(t,x)\ge p_{n,R,\delta}(x)\ \ \mbox{for all}\ \ x\in\mathbb{R}.
\end{equation}

To complete the proof of Lemma \ref{low estimate of H 3}, it remains to check that $p_{n,R,\delta}$ is far way from $0$ as $n\to\infty$, $R\to\infty$ and $\delta\to 0$.
Since $p_{n,R,\delta}$ is bounded from above by $1$, one can use standard elliptic estimates to get that, as $n\to +\infty$, $R\to+\infty$ and $\delta\to 0$, the function $p_{n,R,\delta}(x)$ converges locally uniformly to a stationary solution $p_{\infty}(x)$ of the equation
\begin{equation*}
d_h\partial_{xx} p_{\infty}+bp_{\infty}(1-g-p_{\infty})=0.
\end{equation*}
Moreover, since the map $t\to\psi(t,x;\eta_0)$ is nondecreasing, we assert from Claim \ref{claim  of stationary solution} that $p_{n,R,\delta}(0)\ge\psi(0,0;\eta_0)\ge\eta_0\phi_R(0)$. Note that $\phi_R(x)\to 1$ locally uniformly as $R\to+\infty$, and hence $p_{\infty}(0)\ge\eta_0$. As a consequence, we reached a contradiction to \eqref{uniform converget to 0 of H in claim} and completed the proof of Lemma \ref{low estimate of H 3}.
\end{proof}
\vspace{10pt}

Now we are ready to prove Proposition \ref{prop: uniform lower estimate on H g<1}.

\begin{proof}\it{of Proposition \ref{prop: uniform lower estimate on H g<1}.}
\normalfont
We proceed by contradiction and  assume that there exists a sequences $\{t_n\}_{n>0}$ with $t_n\to\infty$ and $\{x_n\}_{n>0}\subset\mathbb{R}$ such that
\begin{equation*}
\liminf_{t\to+\infty}H(t_n,x_n)\le \frac{1}{n}.
\end{equation*}

The initial data $H_0(x)\equiv 1$ implies that there exist two sequences $\{s_n\}_{n\ge 0}$ with $0<s_n<t_n$ such that for each $n>0$,

\begin{equation*}
H(t,x_n)\le \frac{\varepsilon_1}{2}\ \ \mbox{for all}\ \ t\in[s_n,t_n],
\end{equation*}
\begin{equation*}
H(s_n,x_n)=\frac{\varepsilon_1}{2}.
\end{equation*}

We assume as before, possibly along a subsequence, the functions 
\[
(F_n, C_n, H_n)(t,x):=(F,C,H)(t_n+t,x_n+x)
\]
converges locally uniformly to $(F_{\infty}, C_{\infty}, H_{\infty})$, which is an entire solution of
\begin{equation}\label{eq F C H infty without drift 2}
\left\{
\begin{aligned}
&\partial_tF_{\infty}=\partial_{xx} F_{\infty}+aF_{\infty}(1-C_{\infty}-F_{\infty}),\\
&\partial_tC_{\infty}=d_c\partial_{xx} C_{\infty}+C_{\infty}(1-C_{\infty}-F_{\infty})+s(F_{\infty}+C_{\infty})H_{\infty},\\
&\partial_tH_{\infty}=d_h\partial_{xx} H_{\infty}+bH_{\infty}(1-gF_{\infty}-gC_{\infty}-H_{\infty}).
\end{aligned}
\right.
\end{equation}
From the choices of sequences $\{t_n\}_{n\ge 0}$ and $\{x_n\}_{n\ge 0}$, one has $H_{\infty}(0,0)=0$, and hence $H_{\infty}\equiv 0$. In particular, $|s_n-t_n|\to\infty$ as $n\to\infty$, otherwise it contradicts the fact that
\begin{equation*}
\lim_{n\to\infty}H_n(s_n-t_n,0)=\frac{\varepsilon_1}{2}>0.
\end{equation*}

Now let us consider the limit functions as follows:
\begin{equation*}
\widetilde{F}(t,x)=\lim_{n\to\infty}F_n(t,x)=\lim_{n\to\infty}F(s_n+t,x_n+x),
\end{equation*}
\begin{equation*}
\widetilde{C}(t,x)=\lim_{n\to\infty}C_n(t,x)=\lim_{n\to\infty}C(s_n+t,x_n+x),
\end{equation*}
\begin{equation*}
\widetilde{H}(t,x)=\lim_{n\to\infty}H_n(t,x)=\lim_{n\to\infty}H(s_n+t,x_n+x),
\end{equation*}
which are well defined thanks to standard parabolic estimates. 
Then we look on $(\widetilde{F}, \widetilde{C}, \widetilde{H})$ as a solution of (\ref{fch-equation}) with initial data
\begin{equation*}
(\widetilde{F}_0, \widetilde{C}_0, \widetilde{H}_0):=\lim_{n\to\infty}(F_n(s_n,x_n+x), C_n(s_n,x_n+x), H_n(s_n,x_n+x)).
\end{equation*}

Since $\widetilde{H}_0(0)=\varepsilon_1/2>0$, by applying Lemma \ref{low estimate of H 3}, one has
\begin{equation}\label{eq 5}
\limsup_{t\to\infty}\widetilde{H}(t,0)\ge \varepsilon_1. 
\end{equation}
On the other hand, for all $t\in[0,t_n-s_n)$, it holds
\begin{equation*}
H_n(t,0)\le \frac{\varepsilon_1}{2}.
\end{equation*}
For each fixed $t\ge 0$, since $t\in[0,t_n-s_n)$, we get by the locally uniform convergence that
\begin{equation*}
\widetilde{H}(t,0)\le \frac{\varepsilon_1}{2} \ \ \mbox{for all}\ \  t\ge 0,
\end{equation*}
which contradicts the result \eqref{eq 5}.
Therefore, the proof of Proposition \ref{prop: uniform lower estimate on H g<1} is complete. 
\end{proof}
\vspace{10pt}

\subsubsection{Construction of the strict Lyapunov function}

Before we investigate the profiles of the $C$-component and $H$-component in the finial zone,  we first consider the dynamics of the underlying ODE system \eqref{Ode system}:
\[
\left\{
\begin{aligned}
&C_t=C(1-C)+sCH,\\
&H_t=bH(1-H-gC).
\end{aligned}
\right.
\]
We expect the solution of the PDE system (\ref{fch-equation}) to converge uniformly to the unique positive equilibrium $(C^*,H^*)$ as $t\to+\infty$.

Let us introduce the set $\Sigma=\{(C,H)\in \mathbb{R}^2\ :\ 0<C<1+s,\ 0<H<1 \}$.
There exists
a strictly convex function $\Phi:\Sigma\to\mathbb{R}$ of class $C^2$ that attains its minimum point at $(C^*,H^*)$ and satisfies
\[
(C(1-C+sH),bH(1-H-gC))\cdot \nabla\Phi(C,H)\le 0\ \mbox{for all}\ \ (C,H)\in\Sigma.
\]
As a matter of fact, we can consider the strictly convex functional as
$$\Phi(C,H):=bg\int_{C^*}^C\frac{\eta-C^*}{\eta}d\eta+s\int_{H^*}^H\frac{\xi-H^*}{\xi}d\xi.$$
It is not difficult to check that, for all $(C,H)\in \Sigma$, it holds
$$(C(1-C+sH),bH(1-H-gH))\cdot \nabla\Phi(C,H)=-bg(C-C^*)^2-bs(H-H^*)^2\le 0.$$

Moreover, we claim that there exists $\nu=\min\{C^*,bH^*\}>0$ such that 
\begin{equation}\label{condition on lp function}
(C(1-C+sH),bH(1-H-gH))\cdot \nabla\Phi(C,H)\le -\nu\Phi(C,H).
\end{equation}
Indeed, for $C\le C^*$, since $\int_{C^*}^C\frac{\eta-C^*}{\eta}d\eta\le 0$, one has
$$-(C-C^*)^2\le -\nu \int_{C^*}^C\frac{\eta-C^*}{\eta}d\eta.$$
For $C> C^*$, since $\int_{C^*}^C\frac{\eta-C^*}{\eta}d\eta\ge \frac{(C-C^*)^2}{C^*}$,
it holds 
$$-bg(C-C^*)^2\le - bgC^*\int_{C^*}^C\frac{\eta-C^*}{\eta}d\eta.$$
By some similar computations, one also has
$$-bs(H-H^*)^2\le -\nu s\int_{H^*}^H\frac{\xi-H^*}{\xi}d\xi.$$

Furthermore, for any solution $(C,H)$ of the ODE system \eqref{Ode system}, one has
\begin{align*}
\Phi(C,H)_t&=bg(C-C^*)(1-C+sH)+bs(H-H^*)(1-H-gH)\\
&=-bg(C-C^*)^2-bs(H-H^*)^2,
\end{align*}
which implies it is a strict Lyapunov function in the sense that:
if $(C,H)$ denotes the solution of corresponding ODE system \eqref{Ode system} with 
the initial data $(C_0,H_0)$, then
\[
\Phi(C(t),H(t))=\Phi(C_0,H_0)\ \mbox{for all}\ \ t>0\Rightarrow (C_0,H_0)=(C^*,H^*).
\]
Since $\Phi$ is bounded from below, we assume without loss of generality that $\Phi\ge 0$, and the equality only holds at the unique
minimizer $(C^*,H^*)$.

\subsubsection{Proof of Theorem \ref{th:profile c_c>c_f} for the low conversion rate case}
We will first show that, in the final zone, $C$ is uniformly smaller than $1+s$, and $H$ is uniformly smaller than $1+s$.
\begin{proposition}\label{prop: C<1+s H<1}
Assume $a<d_c(1+s)$ and $g<1$. Let $(F,C,H)$ be the solution of \eqref{fch-equation} with initial data satisfying \eqref{initial data}. Then for any $0<c_0<c_c$, there exists $\varepsilon_2$ such that
\begin{equation}\label{C<1+s}
\limsup_{t\rightarrow\infty}\sup_{|x|\le c_0t}C(t,x)\le 1+s-\varepsilon_2,
\end{equation}
\begin{equation}\label{H<1}
\limsup_{t\rightarrow\infty}\sup_{|x|\le c_0t}H(t,x)\le 1-\varepsilon_2.
\end{equation}
\end{proposition}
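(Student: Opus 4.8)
The plan is to argue by contradiction through a translation-and-limit argument: if the solution comes arbitrarily close to one of the two barriers $C=1+s$ or $H=1$ at points drifting out into the final zone, then translating the solution to those points and passing to a locally uniform limit produces an entire solution of the limiting two-component system \eqref{limit system C H} that actually attains the barrier at the space-time origin; the first- and second-order conditions at this interior maximum, fed into the two equations, will be mutually incompatible.

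First I would record the (essentially known) a priori bounds that make the limit argument run. Comparing the $H$-equation with the logistic ODE (using $F,C\ge0$, $g>0$ and $H_0\equiv1$) gives $0\le H\le1$ on all of $\mathbb{R}^+\times\mathbb{R}$. Next, since $\sup_x F(t,x)\to0$ by \eqref{unif 0 F}, for large $t$ the $C$-equation obeys $\partial_tC\le d_c\partial_{xx}C+C(1+s-C)+s\epsilon$, whose spatially constant super-solution decays to the root of $y(1+s-y)+s\epsilon=0$ lying just above $1+s$; letting $\epsilon\to0$ yields $\limsup_{t\to\infty}\sup_{x\in\mathbb{R}}C(t,x)\le1+s$. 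I will also use, from Proposition \ref{pr:a not= 1+s F}, that $F\to0$ uniformly on $\mathbb{R}$ and that $\liminf_{t\to\infty}\inf_{|x|\le ct}C(t,x)\ge1$ for every $0<c<c_c$. Setting $L_C:=\limsup_{t\to\infty}\sup_{|x|\le c_0t}C(t,x)$ and $L_H:=\limsup_{t\to\infty}\sup_{|x|\le c_0t}H(t,x)$, it suffices to prove $L_C<1+s$ and $L_H<1$; then $\varepsilon_2:=\min\{1+s-L_C,\,1-L_H\}>0$ works.

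Suppose $L_C=1+s$ (by the a priori bound it cannot exceed $1+s$). Choose $t_n\to\infty$ and $|x_n|\le c_0t_n$ with $C(t_n,x_n)\to1+s$, put $(F_n,C_n,H_n)(t,x):=(F,C,H)(t_n+t,x_n+x)$, and extract by parabolic estimates a locally uniform limit $(F_\infty,C_\infty,H_\infty)$. Since $t_n+t\to\infty$ and $F\to0$ uniformly, $F_\infty\equiv0$, so $(C_\infty,H_\infty)$ is an entire solution of \eqref{limit system C H}; moreover $H_\infty\le1$ and $C_\infty\le1+s$ everywhere (the two a priori bounds survive the translations because they are uniform in time), while $C_\infty(0,0)=1+s$. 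Hence $(0,0)$ is a global maximum of $C_\infty$, so $\partial_tC_\infty(0,0)=\partial_xC_\infty(0,0)=0$ and $\partial_{xx}C_\infty(0,0)\le0$; the $C_\infty$-equation at $(0,0)$ then gives $s(1+s)\bigl(1-H_\infty(0,0)\bigr)=d_c\partial_{xx}C_\infty(0,0)\le0$, which together with $H_\infty\le1$ forces $H_\infty(0,0)=1$. Thus $(0,0)$ is also a global maximum of $H_\infty$, so $\partial_tH_\infty(0,0)=0$ and $\partial_{xx}H_\infty(0,0)\le0$, and the $H_\infty$-equation at $(0,0)$ reads $0=d_h\partial_{xx}H_\infty(0,0)-bg(1+s)\le-bg(1+s)<0$, a contradiction. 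The case $L_H=1$ is the same but shorter: the limit has $H_\infty\le1$, $H_\infty(0,0)=1$, and $C_\infty(0,0)\ge1$ (the base points $|x_n|\le c_0t_n$ stay inside $|x|<ct$ for any $c_0<c<c_c$, so the lower bound on $C$ passes to the limit at the origin); the maximum-point condition in the $H_\infty$-equation then gives $0=d_h\partial_{xx}H_\infty(0,0)-bgC_\infty(0,0)\le-bg<0$, again impossible.

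The one point that needs care is justifying that the limiting configuration is of this "touching at an interior point" type: I must know that $(0,0)$ is a genuine global maximum (of $C_\infty$ in the first case, of $H_\infty$ in the second), which is exactly why the a priori bounds $H\le1$ and $\limsup_t\sup_xC\le1+s$ must be uniform in $t$ so as to survive the time shifts $t_n+t\to\infty$; and in the $L_H=1$ case I must also keep the translated base points inside the region $|x|<ct$ where Proposition \ref{pr:a not= 1+s F} gives $C\ge1$, in order to secure $C_\infty(0,0)\ge1$. Once these are in hand the rest is only the elementary first/second derivative test at a maximum, so no strong maximum principle or nontrivial barrier construction is needed here.
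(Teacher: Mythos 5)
Your argument is correct, and it shares the paper's overall framework (translate along sequences in the final zone, pass to an entire solution of the limit system \eqref{limit system C H} with $F_\infty\equiv 0$, using \eqref{unif 0 F} and \eqref{C>=1 a<1+s}), but the mechanism you use to extract the \emph{strict} inequalities on the limit is genuinely different. The paper feeds $C_\infty\ge 1$ into an explicit spatially constant, time-decaying super-solution $\overline H_\infty=(1-\tfrac g2)+\tfrac g2 e^{-g(t+t_0)/2}$ and lets $t_0\to-\infty$ to get the quantitative bound $H_\infty\le 1-\tfrac g2$, then bootstraps this into a second super-solution giving $C_\infty\le 1+s(1-\tfrac g4)$; this yields an explicit $\varepsilon_2$ depending on $g$ and $s$. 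You instead run a second contradiction: if the limit touches the barrier $C_\infty=1+s$ (resp. $H_\infty=1$) at the origin, the interior first/second derivative test in the $C_\infty$-equation forces $H_\infty(0,0)=1$, and then the $H_\infty$-equation at that common maximum gives $0\le -bg(1+s)<0$. Your route needs the extra a priori ingredients $H\le 1$ and $\limsup_{t\to\infty}\sup_x C\le 1+s$ (both of which you justify correctly, the latter from the uniform decay of $F$ and a spatially homogeneous ODE comparison), and it yields a non-constructive $\varepsilon_2$, which is all the statement requires; it also happens not to use $g<1$ at all, consistent with the fact that the conclusion holds a fortiori when $g\ge 1$. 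The paper's version buys explicit constants; yours is more elementary in that it avoids constructing time-dependent barriers on the limit system.
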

\begin{proof}{\it of Proposition \ref{prop: C<1+s H<1}.}
For arbitrarily chosen $\{t_n\}_{n>0}$ with $t_n\to\infty$ and $\{x_n\}_{n>0}$ satisfying $|x_n|<(c_0-\varepsilon) t_n$, we define 
\[
(F_n, C_n, H_n)(t,x):=(F,C,H)(t_n+t,x_n+x),
\]
which converges locally uniformly to $(F_{\infty}, C_{\infty}, H_{\infty})$ satisfying the limit system \eqref{eq F C H infty without drift 2}.
By \eqref{unif 0 F}, $(C_{\infty},H_{\infty})$ satisfies
\begin{equation*}
\left\{
\begin{aligned}
&\partial_tC_{\infty}=d_c\partial_{xx} C_{\infty}+C_{\infty}(1-C_{\infty})+sC_{\infty}H_{\infty},\\
&\partial_tH_{\infty}=d_h\partial_{xx} H_{\infty}+bH_{\infty}(1-gC_{\infty}-H_{\infty}).
\end{aligned}
\right.
\end{equation*}

By \eqref{C>=1 a<1+s}, we can assert that $C_{\infty}(t,x)\ge 1$ for all $(t,x)\in\mathbb{R}\times\mathbb{R}$. Then
$$\overline H_{\infty}(t,x):=(1-\frac{g}{2})+\frac{g}{2}e^{-\frac{g(t+t_0)}{2}}$$
is the super-solution, which implies $H_{\infty}(t,x)\le 1-\frac{g}{2}$ for all $(t,x)\in\mathbb{R}\times\mathbb{R}$.

On the other hand, since $H_{\infty}(t,x)\le 1-\frac{g}{2}$ for all $(t,x)\in\mathbb{R}\times\mathbb{R}$, it is easy to check that 
$$\overline C_{\infty}(t,x):=(1+s(1-\frac{g}{4}))+\frac{g}{4}e^{-\delta(t+t_0)}$$ 
with very small $\delta>0$ is the super-solution, which implies $C_{\infty}(t,x)\le 1+s(1-\frac{g}{4})$ for all $(t,x)\in\mathbb{R}\times\mathbb{R}$.
Since $\{t_n\}_{n>0}$ and $\{x_n\}_{n>0}$ are arbitrarily chosen, the proof of Proposition \ref{prop: C<1+s H<1} is complete.
\end{proof}

We are  now ready to complete the proof of Theorem \ref{th:profile c_c>c_f}.

\begin{proof}{\it of (2) of Theorem \ref{th:profile c_c>c_f}.}
Let us argue by contradiction and assume that there exist $c\in\hspace{-3pt}[0,c^*)$ and a sequence $\{(t_n,x_n)\}_{n\ge0}\in\mathbb{R}^+\times\mathbb{R}$ such that $t_n\to+\infty$ and $\delta>0$ such that for all $n>0$,
\begin{equation}\label{aaa}
|x_n|\le ct_n\ \ \mbox{and}\ \ |C(t_n,x_n)-C^*|+|H(t_n,x_n)-H^*|\ge\delta.
\end{equation}
Consider the sequence of functions $(F_n,C_n,H_n)(t,x)=(F,C,H)(t+t_n,x+x_n)$.
Now, let us fix $c'>0$ such that $c<c'<c^*$. Therefore, there exist $N>0$ large enough and $\varepsilon>0$ small enough such that, for  $t+t_n\ge A$ and $|x|\le c't+(c'-c)t_n$, one has $F_n(t,x)\le \frac{1}{n}$ from Proposition \ref{pr:a not= 1+s F}. Moreover, there exists $\varepsilon>0$ such that 
$$\varepsilon\le C_n(t,x)\le 1+s-\varepsilon\quad\text{and}\quad \varepsilon\le H_n(t,x)\le 1-\varepsilon.$$

Then, by parabolic estimates, possibly along a subsequence, one may assume that
$$(F_n,C_n,H_n)(t,x)\to (F_{\infty},C_{\infty},H_{\infty})(t,x)\ \ \mbox{locally uniformly for}\ \ (t,x)\in\mathbb{R}\times\mathbb{R},$$
where $(F_{\infty},C_{\infty},H_{\infty})$ is a bounded entire solution and satisfies
\[
\underset{(t,x)\in\mathbb{R}\times\mathbb{R}}{\sup}F_{\infty}(t,x)=0,
\]
\begin{equation}\label{C>0 H>0 unif}
\underset{(t,x)\in\mathbb{R}\times\mathbb{R}}{\inf}C_{\infty}(t,x)>0\ \ \mbox{and}\ \ \underset{(t,x)\in\mathbb{R}\times\mathbb{R}}{\inf}H_{\infty}(t,x)>0,
\end{equation}
Moreover,  by \eqref{aaa} it holds
\begin{equation}\label{bb}
|C_{\infty}(0,0)-C^*|+|H_{\infty}(0,0)-H^*|>0.
\end{equation}
However, by applying the Lyapunov function $\Psi(C,H)$ which satisfying \eqref{condition on lp function},  \eqref{bb} contradicts to the convergence result of Theorem 1.1 in \cite{GS}. 
Therefore, the proof of (2) of Theorem \ref{th:profile c_c>c_f} is complete.
\end{proof}


\section*{Acknowledgments}
The authors would like to express sincere thanks to Prof. Hiroshi Matano for many helpful suggestions and  continuous encouragement.


\end{document}